\documentclass{article}
\usepackage{amsmath, amsthm, amssymb, amsfonts}
\usepackage{geometry}
\usepackage{graphicx}
\usepackage[titletoc]{appendix}
\usepackage[pdfstartview = FitH]{hyperref}
\usepackage{enumitem}
\usepackage{mathtools}
\usepackage{natbib}
\mathtoolsset{showonlyrefs}

\hypersetup{
colorlinks=false,
citebordercolor={1 1 1} % set to white
}

%\doublespacing

%\hypersetup{
%colorlinks=false,
%citebordercolor={1 1 1} % set to white
%}

\newtheorem{theorem}{Theorem}[section]
\newtheorem{lemma}[theorem]{Lemma}
\newtheorem{proposition}[theorem]{Proposition}

\theoremstyle{definition}

\newtheorem{assumption}{Assumption}
\newtheorem{remark}[theorem]{Remark}

 %Integral von a bis b
\newcommand{\norm}[1]{\Vert {#1} \Vert} %Norm
\newcommand{\sumab}[2]{\sum \limits_{#1}^{#2}} %Summe von a bis b
 %Produkt von a bis b
\newcommand{\Nat}{\mathbb{N}}
\newcommand{\Rel}{\mathbb{R}}
 %Wahrscheinlichkeit von Ereignis #1
\newcommand{\geschw}[1]{\left\{ #1 \right\}}

\newcommand{\secret}[1]{}

\DeclareMathOperator{\rank}{rank}
\DeclareMathOperator{\lspan}{span}

\DeclareMathOperator{\boundary}{bd}

\DeclareMathOperator{\closure}{cl}
\DeclareMathOperator{\adj}{adj}
\DeclareMathOperator{\grad}{grad}

\DeclareMathOperator{\diag}{diag}

\begin{document}
%\title[Tests Based on Prewhitened Nonparametric Covariance Estimators]{Finite Sample properties of Tests Based on Prewhitened Nonparametric Covariance Estimators}
\title{Finite Sample Properties of Tests Based on Prewhitened Nonparametric Covariance Estimators}
%\author[D. Preinerstorfer]{David Preinerstorfer}
\author{David Preinerstorfer\thanks{Department of Statistics and Operations Research, University of Vienna, Oskar-Morgenstern-Platz 1, 1090 Wien, Austria. E-mail: david.preinerstorfer@univie.ac.at. This research was supported by the Austrian Science Fund (FWF): P27398. I am grateful to Benedikt M. P\"otscher for many helpful discussions and for feedback on earlier versions of this manuscript.}}
\date{First version: August 2014 \\ This version: May 2015}

%\address{Department of Statistics and Operations Research, University of Vienna, Oskar-Morgenstern-Platz 1, 1090 Wien, Austria}
%\email{david.preinerstorfer@univie.ac.at}
%\thanks{I am grateful to Benedikt M. P\"otscher for feedback on an earlier version of this manuscript.}
%\date{August 2014}

\maketitle

\begin{abstract}
We analytically investigate size and power properties of a popular family of procedures for testing linear restrictions on the coefficient vector in a linear regression model with temporally dependent errors. The tests considered are autocorrelation-corrected F-type tests based on prewhitened nonparametric covariance estimators that possibly incorporate a data-dependent bandwidth parameter, e.g., estimators as considered in \cite{A92}, \cite{NW94}, or \cite{Rho2013}. For design matrices that are generic in a measure theoretic sense we prove that these tests either suffer from extreme size distortions or from strong power deficiencies. Despite this negative result we demonstrate that a simple adjustment procedure based on artificial regressors can often resolve this problem. 
\end{abstract}

\textbf{AMS Mathematics Subject Classification 2010}: 62F03, 62J05, 62F35, 62M10, 62M15;  

\medskip

\textbf{Keywords}: Autocorrelation robustness, HAC test, fixed-b test, prewhitening, size distortion, power deficiency, artificial regressors.

\section{Introduction}\label{Intro}

The construction of tests for hypotheses on the coefficient vector in linear regression models with dependent errors is highly practically relevant and has received lots of attention in the statistics and econometrics literature. The main challenge is to obtain tests with good size and power properties in situations where the nuisance parameter governing the dependence structure of the errors is high- or possibly infinite-dimensional and allows for strong correlations. The large majority of available procedures are autocorrelation-corrected F-type tests, based on nonparametric covariance estimators trying to take into account the autocorrelation in the disturbances. These tests can roughly be categorized into two groups, the distinction depending on the choice of a bandwidth parameter in the construction of the covariance estimator. The first group of such tests, so-called `HAC tests', incorporates bandwidth parameters that lead to consistent covariance estimators, and to an asymptotic $\chi^2$-distribution of the corresponding test statistics under the null hypothesis, the quantiles of which are used for testing. Concerning `HAC tests', important contributions in the econometrics literature are \cite{NW87}, \cite{A91}, \cite{A92}, and \cite{NW94}. It is safe to say that the covariance estimators introduced in the latter two articles currently constitute the gold standard for obtaining `HAC tests'. In contrast to the estimator suggested earlier by \cite{NW87} - structurally $2\pi$ times a standard kernel spectral density estimator (\cite{Bartlett1950}, \cite{Jow1955}, \cite{Hannan1957}, and \cite{GR57} Section 7.9) evaluated at frequency $0$ - the covariance estimators suggested in \cite{A92} and \cite{NW94} both incorporate an additional prewhitening step based on an auxiliary vector autoregressive (VAR) model, as well as a data-dependent bandwidth parameter. A distinguishing feature of the estimators introduced by \cite{A92} on the one hand and \cite{NW94} on the other hand is the choice of the bandwidth parameter: \cite{A92} used an approach introduced by \cite{A91}, where the bandwidth parameter is chosen based on auxiliary parametric models. In contrast to that, \cite{NW94} suggested a nonparametric approach for choosing the bandwidth parameter. Even though simulation studies have shown that the inclusion of a prewhitening step and the data-dependent choice of the bandwidth parameter can improve the finite sample properties of `HAC tests', the more sophisticated `HAC tests' so obtained still suffer from size distortions and power deficiencies. For this reason \cite{KVB2000}, \cite{KV2002}, and \cite{KV2005} suggested to choose the bandwidth parameter as a fixed proportion of the sample size. This framework leads to an inconsistent covariance estimator and to a non-standard limiting distribution of the corresponding test statistic under the null, the quantiles of which are used to obtain so called `fixed-b tests'. In simulation studies it has been observed that `fixed-b tests' still suffer from size distortions in finite samples, but less so than `HAC tests'. However, this is at the expense of some loss in power. Similar as in `HAC testing' simulation results in \cite{KV2005} and \cite{Rho2013} suggest that the finite sample properties of `fixed-b tests' can be improved by incorporating a prewhitening step. In the latter paper it was furthermore shown that the asymptotic distribution under the null of the test suggested by \cite{KVB2000} is the same whether or not prewhitening is used. 

A number of recent studies (\cite{RV2001}, \cite{J04}, \cite{SPJ08, SPJ11}, \cite{ZhangShao2013}) tried to use higher order expansions to uncover the mechanism leading to size distortions and power deficiencies of `HAC tests' and `fixed-b tests'. These higher-order asymptotic results (and also the first-order results discussed above) are pointwise in the sense that they are obtained under the assumption of a fixed underlying data-generating-process. Hence, while they inform us about the limit of the rejection probability and the rate of convergence to this limit for a fixed underlying data-generating-process, they do not inform us about the \textit{size} of the test or its limit as sample size increases, nor about the \textit{power function} or its asymptotic behavior. Size and power properties of tests in regression models with dependent errors were recently studied in \cite{PP13}: In a general finite sample setup and under high-level conditions on the structure of the test and the covariance model, they derived conditions on the design matrix under which a concentration mechanism due to strong dependencies leads to extreme size distortions or power deficiencies. Furthermore, they suggested an adjustment-procedure to obtain a modified test with improved size and power properties. Specializing their general theory to a covariance model that includes at least all covariance matrices corresponding to stationary autoregressive processes of order one (AR(1)), they investigated finite sample properties of `HAC tests' and `fixed-b tests' based on \textit{non-prewhitened} covariance estimators with \textit{data-independent} bandwidth parameters (covering \textit{inter alia} the procedures in \cite{NW87}, Sections 3-5 of \cite{A91}, \cite{H92}, \cite{KVB2000}, \cite{KV2002, KV2005}, \cite{J02, J04}, but \textit{not} the methods considered by \cite{A92}, \cite{NW94} or \cite{Rho2013}). In this setup \cite{PP13} demonstrated that these tests break down in terms of their size or power behavior for generic design matrices. Despite this negative result, they also showed that the adjustment procedure can often solve these problems, if elements of the covariance model which are close to being singular can be well approximated by AR(1) covariance matrices. 

\cite{PP13} did not consider tests based on prewhitened covariance estimators or data-dependent bandwidth parameters. Therefore the question remains, whether the more sophisticated `HAC tests' typically used in practice (i.e., tests based on the estimators by \cite{A92} or \cite{NW94}) and the prewhitened `fixed-b tests' (i.e., tests as considered in \cite{Rho2013}) also suffer from extreme size distortions and power deficiencies, or if prewhitening and the use of data-dependent bandwidth parameters can indeed resolve or at least substantially alleviate these problems. In the present paper we investigate finite sample properties of tests based on prewhitened covariance estimators or data-dependent bandwidth parameters. In particular our analysis covers tests based on prewhitened covariance estimators using auxiliary AR(1) models for the construction of the bandwidth parameter as discussed in \cite{A92}, tests based on prewhitened covariance estimators as discussed in \cite{NW94}, and prewhitened `fixed-b' tests as discussed in \cite{Rho2013}. We show that the tests considered, albeit being structurally much more complex, exhibit a similar behavior as their \textit{non-prewhitened} counterparts with \textit{data-independent} bandwidth parameters: First, we establish conditions on the design matrix under which the tests considered have (i) size equal to one, or (ii) size not smaller than one half, or (iii) nuisance-minimal power equal to zero, respectively. We then demonstrate that at least one of these conditions is generically satisfied, showing that the tests considered break down for generic design matrices. Motivated by this negative result, we introduce an adjustment procedure. Under the assumption that elements of the covariance model which are close to being singular can be well approximated by AR(1) covariance matrices, we show that the adjustment procedure, if applicable, leads to tests that do not suffer from extreme size distortions or power deficiencies. Finally, it is shown that the adjustment procedure is applicable under generic conditions on the design matrix, unless the regression includes the intercept \textit{and} the hypothesis to be tested restricts the corresponding coefficient. On a technical level we employ the general theory developed in \cite{PP13}. We remark, however, that the genericity results in particular do not follow from this general theory. Rather they are obtained by carefully exploiting the specific structure of the procedures under consideration.

The paper is organized as follows: The framework is introduced in Section \ref{framework}. In Section \ref{tpwc} we introduce the test statistics, covariance estimators, and bandwidth parameters we analyze. In Section \ref{neg} we establish our negative result and its genericity. In Section \ref{pos} we discuss the adjustment-procedure and its generic applicability. Section \ref{concl} concludes. The proofs are collected in Appendices \ref{AA}-\ref{AD}.

\section{The Framework} \label{framework}

Consider the linear regression model 
\begin{equation} \label{lm}
\mathbf{Y}=X\beta +\mathbf{U}, 
\end{equation}
where $X$ is a (real) $n\times k$ dimensional non-stochastic design matrix satisfying $n > 2$, $\rank(X)=k$ and $1 \leq k < n$. Here, $\beta \in \mathbb{R}^{k}$ denotes the unknown regression parameter vector, and the disturbance vector $\mathbf{U} = (\mathbf{u}_1, \hdots, \mathbf{u}_n)'$ is Gaussian, has mean zero and its unknown covariance matrix is given by $\sigma^2 \Sigma$. The parameter $\sigma^2$ satisfies $0 < \sigma^2 < \infty$ and $\Sigma$ is assumed to be an element of a prescribed (non-void) set of positive definite and symmetric $n \times n$ matrices $\mathfrak{C}$, which we shall refer to as the \textit{covariance model}. Throughout we impose the assumption on $\mathfrak{C}$ that the parameters $\sigma^2$ and $\Sigma$ can be uniquely determined from $\sigma^2 \Sigma$. 

\begin{remark}\label{stationary} 
The leading case we have in mind is the situation where $\mathbf{u}_1, \hdots, \mathbf{u}_n$ are $n$ consecutive elements of a weakly stationary process. In such a setup a covariance model is typically obtained from a prescribed (non-void) set of spectral densities $\mathcal{F}$. Assuming that no element of $\mathcal{F}$ vanishes identically almost everywhere, the covariance model corresponding to $\mathcal{F}$ is then given by
\begin{equation}
\mathfrak{C}(\mathcal{F}) = \geschw{\Sigma(f): f \in \mathcal{F}},
\end{equation}
with 
\begin{equation}\label{covmat}
\Sigma(f) = \left( \int_{-\pi}^{\pi} \exp(-\iota \lambda (i-j)) f(\lambda) d\lambda \bigg / \int_{-\pi}^{\pi} f(\lambda) d \lambda  \right)_{i,j = 1}^n,
\end{equation}
and where $\iota$ denotes the imaginary unit. Every such $\Sigma(f)$ is positive definite and symmetric. Furthermore, since $\Sigma(f)$ is a correlation matrix, $\sigma^2$ and $\Sigma(f)$ can uniquely be determined from $\sigma^2 \Sigma(f)$. As outlined in the Introduction the tests we shall investigate in this paper are particularly geared towards setups where $\mathcal{F}$ is a nonparametric class of spectral densities, i.e., where the corresponding set $\mathfrak{C}(\mathcal{F})$ is rich. A typical example is the class $\mathcal{F}_{\xi}$, which consists of all spectral densities of linear processes the coefficients of which satisfy a certain summability condition, i.e., spectral densities of the form
\begin{equation}
f(\lambda) = (2\pi)^{-1} \bigg| \sum_{j = 0}^{\infty} c_j \exp(-\iota j \lambda)  \bigg|^2,
\end{equation}
where, for a fixed $\xi \geq 0$, the summability condition $0 < \sum_{j = 0}^{\infty} j^{\xi} |c_j| < \infty$ is satisfied. We observe that $\mathfrak{C}(\mathcal{F}_{\xi})$ contains in particular all correlation matrices corresponding to spectral densities of stationary autoregressive moving average models of arbitrary large order. 
\end{remark}

The linear model described in \eqref{lm} induces a collection of distributions on $(\mathbb{R}^{n}, \mathcal{B}(\Rel^n))$, the sample space of $\mathbf{Y}$. Denoting a Gaussian probability measure with mean $\mu \in \mathbb{R}^{n}$ and covariance matrix $\sigma ^{2} \Sigma $ by $P_{\mu ,\sigma ^{2}\Sigma }$ and denoting the regression manifold by $\mathfrak{M}=\lspan(X)$, the induced collection of distributions is given by 
\begin{equation}
\left\{ P_{\mu ,\sigma ^{2}\Sigma }:\mu \in \mathfrak{M},0<\sigma
^{2}<\infty , \Sigma \in \mathfrak{C}\right\}.  \label{lm2}
\end{equation}
Since every $\Sigma \in \mathfrak{C}$ is positive definite by definition, each element $P_{\mu ,\sigma ^{2}\Sigma }$ of the set in the previous display is absolutely continuous with respect to (w.r.t.) Lebesgue measure on $\mathbb{R}^{n}$. 

In this setup we shall consider the problem of testing a linear hypothesis on the parameter vector $\beta \in \mathbb{R}^{k}$, i.e., the problem of testing the null $ R\beta =r$ against the alternative $R\beta \neq r$, where $R$ is a $q\times k$ matrix of rank $q \geq 1$ and $r\in \mathbb{R}^{q}$. Define the affine space
\begin{equation*}
\mathfrak{M}_{0}=\left\{ \mu \in \mathfrak{M}:\mu =X\beta \text{ and }R\beta
=r\right\}
\end{equation*}
and let 
\begin{equation*}
\mathfrak{M}_{1}=\mathfrak{M}\backslash \mathfrak{M}_{0}=\left\{ \mu \in 
\mathfrak{M}:\mu =X\beta \text{ and }R\beta \neq r\right\} .
\end{equation*}
Adopting these definitions, the above testing problem can be written as 
\begin{equation}
H_{0}:\mu \in \mathfrak{M}_{0}, ~ 0<\sigma ^{2}<\infty, ~ \Sigma \in \mathfrak{C} ~~\text{ vs. }~~H_{1}:\mu \in \mathfrak{M}_{1}, ~ 0<\sigma ^{2}<\infty , ~ \Sigma
\in \mathfrak{C},  \label{testing problem}
\end{equation}
where it is emphasized that the testing problem is a compound one. It is immediately clear that size and power properties of tests in this setup depend in a crucial way on the richness of the covariance model $\mathfrak{C}$. 

\smallskip

Before we close this section by introducing some further terminological and notational conventions, some comments on how the above assumptions can be relaxed are in order: We remark that even though our setup assumes a non-stochastic design matrix, the results immediately carry over to a setting where the data generating processes of the design and the disturbances are independent of each other. In such a setup our results deliver size and power properties conditional on the design. The Gaussianity assumption might seem to be restrictive. However, as in Section 5.5 of \cite{PP13}, we mention that the negative results given in Section \ref{neg} of the present paper immediately extend in a trivial way without imposing the Gaussianity assumption on the error vector $\mathbf{U}$ in \eqref{lm}, as long as the assumptions on the feasible error distributions are weak enough to ensure that the implied set of distributions for $\mathbf{Y}$ contains the set in Equation \eqref{lm2}, but possibly contains also other distributions. Furthermore, by applying an invariance argument (explained in \cite{PP13} Section 5.5) one can easily show that all statements about the null-behavior of the procedures under consideration derived in the present paper carry over to the more general distributional setup where $\mathbf{U}$ is assumed to be elliptically distributed. This is to be understood as $\mathbf{U}$ having the same distribution as $\mathbf{m}\sigma\Sigma^{1/2} \mathbf{E}$, where $0 < \sigma < \infty$, $\Sigma \in \mathfrak{C}$, $\mathbf{E}$ is a random vector uniformly distributed on the unit sphere $S^{n-1}$, and $\mathbf{m}$ is a random variable distributed independently of $\mathbf{E}$ and which is positive with probability one.

\smallskip

We next collect some further terminology and notation used throughout the whole paper. A (non-randomized) \textit{test} is the indicator function of a set $W \in \mathcal{B}(\Rel^n)$, i.e., the corresponding \textit{rejection region}. The \textit{size} of such a test (rejection region) is the supremum over all rejection probabilities under the null hypothesis $H_{0}$, i.e., 
\begin{equation}
\sup_{\mu \in \mathfrak{M}_{0}}\sup_{0<\sigma ^{2}<\infty }\sup_{\Sigma \in \mathfrak{C}} P_{\mu ,\sigma ^{2}\Sigma }(W).
\end{equation}
Throughout the paper we let $\hat{\beta}_X(y) = \left( X^{\prime }X\right) ^{-1}X^{\prime }y$, where $X$ is the design matrix appearing in \eqref{lm} and $y\in \mathbb{R}^{n}$. The corresponding ordinary least squares (OLS) residual vector is denoted by $\hat{u}_X(y) = y - X\hat{\beta}_X(y)$. The subscript $X$ is omitted whenever this does not cause confusion. Random vectors and random variables are always written in bold capital and bold lower case letters, respectively. We use $\Pr$ as a generic symbol for a probability measure and denote by $E$ the corresponding expectation operator. Lebesgue measure on $\mathbb{R}^{n}$ will be denoted by $\lambda _{\mathbb{R}^{n}}$. The Euclidean norm is denoted by $\left\Vert \cdot \right\Vert $, while $d(x,A)$ denotes the Euclidean distance of the point $x\in \mathbb{R}^{n}$ to the set $A\subseteq \mathbb{R}^{n}$. For a vector $x$ in Euclidean space we define the symbol $\left\langle x\right\rangle $ to denote $\pm x$ for $x\neq 0$, the sign being chosen in such a way that the first nonzero component of $\left\langle x\right\rangle $ is positive, and we set $\left\langle 0\right\rangle =0$. The $j$-th standard basis vector in $\mathbb{R}^{n}$ is denoted by $e_{j}(n)$. Let $B^{\prime}$ denote the transpose of a matrix $B$ and let $\lspan\left( B\right) $ denote the space spanned by its columns. For a linear subspace $\mathcal{L}$ of $\mathbb{R}^{n}$ we let $\mathcal{L}^{\bot }$ denote its orthogonal complement and we let $\Pi _{\mathcal{L}}$ denote the orthogonal projection onto $\mathcal{L}$. The set of real matrices of dimension $m\times n$ is denoted by $\mathbb{R}^{m\times n}$. Lebesgue measure on this set equipped with its Borel $\sigma$-algebra is denoted by $\lambda_{\Rel^{m \times n}}$. We use the convention that the adjoint of a $1 \times 1$ dimensional matrix $D$, i.e., $\adj(D)$, equals one. Given a vector $v \in \Rel^m$ the symbol $\diag(v)$ denotes the $m \times m$ diagonal matrix with main diagonal $v$. We define 
\begin{equation}
\mathfrak{X}_0 = \geschw{X \in \Rel^{n \times k}: \rank(X) = k},
\end{equation}
i.e., the set of $n \times k$ design matrices of full rank, and whenever $k \geq 2$ we define 
\begin{equation}
\tilde{\mathfrak{X}}_0 = \geschw{\tilde{X} \in \Rel^{n \times (k-1)}: \rank((e_+, \tilde{X})) = k},
\end{equation}
which is canonically identified (as a set) with the set of $n \times k$ design matrices of full column rank the first column of which is the intercept $e_+ = (1, \hdots, 1)' \in \Rel^n$.

\section{Tests based on prewhitened covariance estimators}\label{tpwc}

In the present section we formally describe the construction of tests based on prewhitened covariance estimators. These tests (cf. Remark \ref{Rdef} below and the discussion preceding it) reject for large values of a statistic 
\begin{equation}\label{tslrv}
T(y) = \begin{cases}
(R\hat{\beta}(y) - r)' \hat{\Omega}^{-1}(y) (R\hat{\beta}(y) - r) & \text{ if } y \notin N^*(\hat{\Omega}), \\
0 & \text{ else,}
\end{cases}
\end{equation}
where 
\begin{equation}
\hat{\Omega}(y) = nR(X'X)^{-1} \hat{\Psi}(y) (X'X)^{-1} R', 
\end{equation}
and 
\begin{equation}
N^*(\hat{\Omega}) = \geschw{y \in \Rel^n: \hat{\Omega}(y) \text{ is not invertible or not well defined}}.
\end{equation}
The quantity $\hat{\Psi}$ appearing in the definition of $\hat{\Omega}$ above denotes a (VAR-) prewhitened nonparametric estimator of $n^{-1} E(X'\mathbf{U}\mathbf{U}'X)$ that incorporates a bandwidth parameter which might depend on the data. Such an estimator is completely specified by three core ingredients: First, a \textit{kernel} $\kappa: \Rel \rightarrow \Rel$, i.e., an even function satisfying $\kappa(0) = 1$, such as, e.g., the Bartlett or Parzen kernel; second, a (non-negative) possibly data-dependent \textit{bandwidth parameter} $M$; and third, a deterministic \textit{prewhitening order} $p$, i.e., an integer satisfying $1 \leq p \leq n/(k+1)$ (cf. Remark \ref{Defp}). Specific choices of $M$ are discussed in detail in Section \ref{BW}. All possible combinations of $\kappa$, $M$ and $p$ we analyze are specified in Assumption \ref{weightsPWRB} of Section \ref{comb}. Once these core ingredients have been chosen, one obtains a prewhitened estimator $\hat{\Psi}$, which is computed at an observation $y$ following the Steps (1) - (3) outlined subsequently (cf. also \cite{HL97}). We here assume that the quantities involved (e.g., inverse matrices) are well defined, cf. Remark \ref{WDPSI} below, and follow the convention in the literature and leave the estimator undefined at $y$ else. Using this convention $\hat{\Psi}(y)$ is obtained as follows:

\begin{center}
\begin{enumerate}
	\item To prewhiten the data a VAR(p) model is fitted via ordinary least squares to the columns of $\hat{V}(y) = X'\diag(\hat{u}(y))$. One so obtains the VAR(p) residual matrix $\hat{Z}(y) \in \Rel^{k \times (n-p)}$ with columns
	\begin{equation}
	\hat{Z}_{\cdot (j-p)}(y) = \hat{V}_{\cdot j}(y) - \sum_{l = 1}^p \hat{A}_l^{(p)}(y) \hat{V}_{\cdot (j-l)}(y)   \hspace{1cm} \text{ for } j = p+1, \hdots, n.
	\end{equation}
The $k \times (kp)$-dimensional VAR(p)-OLS estimator is given by
	\begin{equation}
	\hat{A}^{(p)}(y) = \left(\hat{A}^{(p)}_1(y), \hdots, \hat{A}^{(p)}_p(y)\right) = \hat{V}_p(y) \hat{V}_1'(y)\left(\hat{V}_1(y)\hat{V}_1'(y)\right)^{-1},
	\end{equation}
where $\hat{V}_p(y) = \left(\hat{V}_{\cdot(p+1)}(y), \hdots, \hat{V}_{\cdot n}(y)\right) \in \Rel^{k \times (n-p)}$ and the $j$-th column of $\hat{V}_1(y) \in \Rel^{kp \times (n-p)}$ equals $\left(\hat{V}'_{\cdot j+p-1}(y), \hdots, \hat{V}'_{\cdot j+1}(y), \hat{V}'_{\cdot j}(y)\right)' \in \Rel^{kp}$ for $j = 1, \hdots, n-p$. In matrix form we clearly have $\hat{Z}(y) = \hat{V}_p(y) - \hat{A}^{(p)}(y)\hat{V}_1(y)$.  
	\item Then, one computes the quantities
	\begin{equation}
	\check{\Gamma}_i(y) = \begin{cases}
	\frac{1}{n-p} \sum_{j = i+1}^{n-p}\hat{Z}_{\cdot j}(y)\hat{Z}'_{\cdot (j-i)}(y) & \text{ if } 0 \leq i \leq n-p-1,  \\
	\check{\Gamma}'_{-i}(y) & \text{ if } 0 < -i \leq n-p-1,
	\end{cases}
	\end{equation}
	and defines the preliminary estimate 
	\begin{equation}
	\check{\Psi}(y) = \sum_{i = -(n-p-1)}^{n-p-1} \kappa(i/M(y)) \check{\Gamma}_i(y),
	\end{equation}
	where in case $M(y) = 0$ one sets $\kappa(i/M(y)) = 0$ for $i \neq 0$ and $\kappa(i/M(y)) = \kappa(0)$ for $i = 0$.
	\item Finally, the preliminary estimate $\check{\Psi}(y) $ is `recolored' using the transformation
	\begin{equation}
	\hat{\Psi}(y) = \left(I_k-\sum_{l = 1}^p \hat{A}^{(p)}_{l}(y)\right)^{-1} \check{\Psi}(y) \left[\left(I_k-\sum_{l = 1}^p \hat{A}^{(p)}_{l}(y)\right)^{-1}\right]'.
	\end{equation}
\end{enumerate}
\end{center}

\begin{remark}\label{WDPSI}
The construction of $\hat{\Psi}(y)$ outlined above clearly assumes that (i) $\hat{A}^{(p)}(y)$ is well defined, which is equivalent to $\rank(\hat{V}_1(y)) = kp$; that (ii) $M(y)$ is well defined, which depends on the specific choice of $M$ (cf. Section \ref{BW}); and that (iii) $I-\sum_{i = 1}^p \hat{A}^{(p)}_{i}(y)$ is invertible. 
\end{remark}

\begin{remark}\label{Defp}
By assumption, all possible VAR orders $p$ we consider must satisfy $p \leq n/(k+1)$. This is done to rule out degenerate cases: for if $p  > n/(k+1)$, then $\rank(\hat{V}_1(y)) < kp$ would follow, because of $\hat{V}_1(y) \in \Rel^{kp \times (n-p)}$. Hence the covariance estimator would nowhere be well defined for such a choice, because (i) in Remark \ref{WDPSI} would then clearly be violated at every observation $y$.
\end{remark}

\begin{remark}\label{VAROLS}
In the present paper we focus on VAR prewhitening based on the OLS estimator. This is in line with the original suggestions by \cite{NW94}, as well as with \cite{Rho2013}. Alternatively, for $p = 1$, \cite{A92} suggested to use an eigenvalue adjusted version of the OLS estimator, the adjustment being applied if the matrix $I_k-\hat{A}^{(1)}_{1}(y)$ is close to being singular. We shall focus on the unadjusted OLS estimator for the following reasons: \cite{NW94} reported that the finite sample properties show little sensitivity to this eigenvalue adjustment. Furthermore, it is the unadjusted estimator that is often used in implementations of the method suggested by \cite{A92} in software packages for statistical and econometric computing (e.g., its implementation in the \texttt{R} package \texttt{sandwich} by \cite{Zeileissandwich}, or its implementation in \texttt{EViews}, e.g., \cite{schwert2009eviews}, p. 784.). We remark, however, that one can obtain a negative result similar to Theorem \ref{thmlrvPW}, and a positive result concerning an adjustment procedure similar to Theorem \ref{TU_3}, also for tests based on prewhitened estimators with eigenvalue adjustment. Furthermore, we conjecture that it is possible to prove (similar to Proposition \ref{generic}) the genericity of such a negative result, and to show that one can (similar to Proposition \ref{genericADJ}) generically resolve this problem by using the adjustment procedure. We leave the question of which estimator to choose for prewhitening to future research.
\end{remark}
In a typical asymptotic analysis of tests based on prewhitened covariance estimators the event $N^*(\hat{\Omega})$ is asymptotically negligible (since $\hat{\Omega}$ converges to a positive definite, or almost everywhere positive definite matrix). Hence there is no need to be specific about the definition of the test statistic for $y \in N^*(\hat{\Omega})$, and one can work directly with the statistic 
\begin{equation}\label{QF}
y \mapsto (R\hat{\beta}(y) - r)' \hat{\Omega}^{-1}(y) (R\hat{\beta}(y) - r),
\end{equation}
which is left undefined for $y \in N^*(\hat{\Omega})$. In a finite sample setup, however, one has to think about the definition of the test statistic also for $y \in N^*(\hat{\Omega})$. Our decision to assign the value $0$ to the test statistic for $y \in N^*(\hat{\Omega})$ is of course completely arbitrary. That this assignment does not affect our results at all is discussed in detail in the following remark.
\begin{remark}\label{Rdef}
Given that the estimator $\hat{\Omega}$ is based on a triple $\kappa$, $M$, $p$ that satisfies Assumption \ref{weightsPWRB} introduced below (which is assumed in all of our main results, and which is satisfied for covariance estimators using auxiliary AR(1) models for the construction of the bandwidth parameter as considered in \cite{A92}, for covariance estimators as considered in \cite{NW94}, and for covariance estimators as considered in \cite{Rho2013}), it follows from Lemma \ref{N*PW} that $N^*(\hat{\Omega})$ is either a $\lambda_{\Rel^n}$-null set, or that it coincides with $\Rel^n$. In the first case, which is generic under weak dimensionality constraints as shown in Lemma \ref{nullN*PW}, the definition of the test statistic on $N^*(\hat{\Omega})$ does  hence not influence the rejection probabilities, because our model is dominated by $\lambda_{\Rel^n}$ ($\mathfrak{C}$ contains only positive definite matrices). Therefore, size and power properties are not affected by the definition of the test statistic for $y \in N^*(\hat{\Omega})$. In the second case, i.e., if $N^*(\hat{\Omega})$ coincides with $\Rel^n$, the statistic in \eqref{QF} is nowhere well defined, and hence, regardless of which value is assigned to it for observations $y \in N^*(\hat{\Omega})$, the resulting test statistic is constant, and thus the test breaks down trivially.
\end{remark}

\subsection{Bandwidth parameters}\label{BW}

In the following we describe bandwidth parameters $M$ that are typically used in Step 2 in the construction of the prewhitened estimator $\hat{\Psi}$ as discussed above: The parametric approach (based on auxiliary AR(1) models) suggested by \cite{A91} and \cite{A92}, the nonparametric approach introduced by \cite{NW94}, and a data-independent approach which was already investigated in \cite{KV2005} in simulation studies and which has recently been theoretically investigated by \cite{Rho2013}. Since the bandwidth parameter $M$ is computed in Step 2 in the construction of $\hat{\Psi}(y)$, we assume that $\kappa$, $p$ and $y$ are given and that Step 1 has already been successfully completed, i.e., all operations in Step 1 are well defined at $y$, in particular $\hat{Z}(y)$ is available for the construction of $M$. If not, we leave the bandwidth parameter (and hence the covariance estimator) undefined at $y$. We also implicitly assume that the quantities and operations appearing in the procedures outlined subsequently are well defined and leave the bandwidth parameter (and hence the covariance estimator) undefined else. A detailed structural analysis of the subset of the sample space where a prewhitened estimator $\hat{\Omega}$ is well defined is then later given in Lemma \ref{NPWRB} in Section \ref{struct}. Finally, we emphasize that the bandwidth parameters discussed subsequently all require the choice of additional tuning parameters. These tuning parameters are typically chosen independently of $y$ and $X$, an assumption we shall maintain throughout the whole paper (but see Remark \ref{Rtuning} for some generalizations). 

\subsubsection{The parametric approach of \cite{A92}}\label{MA92} 

Let $\omega \in \Rel^k$ be such that $\omega \neq 0$ and $\omega_i \geq 0$ for $i = 1, \hdots, k$, i.e., $\omega$ is a \textit{weights vector}. Based on this weights vector the bandwidth parameter is now obtained as follows: First, univariate AR(1) models are fitted via OLS to $\hat{Z}_{i \cdot}(y)$ for $i = 1, \hdots, k$, giving 
\begin{align}
\hat{\rho}_i(y) &= \sum_{j = 2}^{n-p} \hat{Z}_{ij}(y) \hat{Z}_{i(j-1)}(y) ~~ \bigg \slash ~~ \sum_{j = 1}^{n-p-1} \hat{Z}_{ij}(y)^2 &&\text{ for } i = 1, \hdots, k, \\
\hat{\sigma}_i^2(y) &= (n-p-1)^{-1}\sum_{j = 2}^{n-p}\left(\hat{Z}_{ij}(y) - \hat{\rho}_i(y) \hat{Z}_{i(j-1)}(y)\right)^2 &&\text{ for } i = 1, \hdots, k,
\end{align}
where we note that $n-p-1 > 0$ holds as a consequence of $n > 2$ and $1 \leq p \leq \frac{n}{k+1}$. Then, one calculates 
\begin{align}
	&\hat{\alpha}_1(y) = \sumab{i = 1}{k}\omega_i \frac{4 \hat{\rho}_i^2(y) \hat{\sigma}_i^4(y)}{(1-\hat{\rho}_i(y))^6(1+\hat{\rho}_i(y))^2} ~~ \bigg \slash ~~ \sumab{i = 1}{k}\omega_i \frac{ \hat{\sigma}^4_i(y)}{(1-\hat{\rho}_i(y))^4}, \\
  &\hat{\alpha}_2(y) = \sumab{i = 1}{k}\omega_i \frac{4 \hat{\rho}_i(y)^2 \hat{\sigma}_i^4(y)}{(1-\hat{\rho}_i(y))^8} ~~ \bigg \slash ~~ \sumab{i = 1}{k}\omega_i \frac{ \hat{\sigma}_i^4(y)}{(1-\hat{\rho}_i(y))^4}.
\end{align}
Finally, bandwidth parameters are obtained via 
\begin{equation}
M_{AM, j, \omega, c}(y) = c_{1}\left(\hat{\alpha}_j(y)n\right)^{c_{2}} ~~ \text{ for } ~~ j = 1, 2,
\end{equation}
where to obtain a bandwidth parameter, one has to fix the constants $c_{1} > 0$, $c_{2} > 0$ and $j$ and where $c = (c_1, c_2)$. Typically the choice of these constants and the choice of $j$ depends on certain characteristics of $\kappa$ (for specific choices see \cite{A91}, Section 6, in particular p. 834). For example, if $\kappa$ is the Bartlett kernel one uses $c_{1} = 1.1447$, $c_{2} = 1/3$ and $j = 1$, or if $\kappa$ is the Quadratic-Spectral kernel one would use $c_{1} = 1.13221$, $c_{2} = 1/5$ and $j = 2$. Since we do not need such a specific dependence to derive our theoretical results, we do not impose any further assumptions on these constants beyond being positive (and independent of $y$ and $X$). We shall denote by $\mathbb{M}_{AM}$ the set of all bandwidth parameters that can be obtained as special cases of the method in the present section, by appropriately choosing - functionally independently of $y$ and $X$ - a weights vector $\omega$, constants $c_{1} > 0$, $c_{2} > 0$ and a $j \in \geschw{1, 2}$. 

\begin{remark}
Since $n$, $k$ and $q$ are fixed quantities, the tuning parameters $\omega$, $c_i$ for $i = 1, 2$ and $j$ might also depend on them, although we do not signify this in our notation. A similar remark applies to the constants appearing in Section \ref{MNW94} and in Section \ref{MR}. Although we do not provide any details, we furthermore remark that one can extend our analysis to bandwidth parameters as above, but based on estimators other than $\hat{\rho}_i$, e.g., all estimators satisfying Assumption 4 of \cite{PP13} such as the Yule-Walker estimator or variants of the OLS estimator.
\end{remark}

\subsubsection{The non-parametric approach of \cite{NW94}}\label{MNW94}

Let $\omega \in \Rel^k$ be as in Section \ref{MA92} and let $w(i) \geq 0$ for $|i| = 0, \hdots, n-p-1$ be real numbers such that $w(0) = 1$. For example, \cite{NW94} suggested to use rectangular weights, i.e., 
 \begin{equation}
 w^*(i) = \begin{cases}
 1 & \text{if } |i| \leq \lfloor4(n/100)^{2/9}\rfloor,\\
 0 & \text{else},
 \end{cases}
 \end{equation}
where $\lfloor . \rfloor$ denotes the floor function. Define for every $|i| = 0, \hdots n-p-1$
 \begin{equation}
 \bar{\sigma}_i(y) = \omega' \check{\Gamma}_i(y) \omega =  (n-p)^{-1}\sum_{j=|i| + 1}^{n-p} \omega'\hat{Z}_{\cdot j}(y)\hat{Z}_{\cdot (j-|i|)}'(y)\omega.
 \end{equation}
A bandwidth parameter is then obtained via 
\begin{equation}
	M_{NW, \omega, w, \bar{c}}(y) = \bar{c}_2 \left(\left[ \sum_{i = -(n-p-1)}^{n-p-1} |i|^{\bar{c}_1} w(i) \bar{\sigma}_i(y) ~~ \bigg \slash ~~ \sum_{i = -(n-p-1)}^{n-p-1} w(i) \bar{\sigma}_i(y) \right]^2 n \right)^{\bar{c}_3},
\end{equation}
where $\bar{c}_1$ is a positive integer, where $\bar{c}_2$ and $\bar{c}_3$ are positive real numbers and where $\bar{c} = (\bar{c}_1, \bar{c}_2, \bar{c}_3)$. These numbers are constants independent of $y$ and $X$ and have to be chosen by the user. The choice typically depends on the kernel (for the specific choices we refer the reader to \cite{NW94}, Section 3). As in the previous section, we do not impose any assumptions beyond positivity (and independence of $y$ and $X$) on the constants. Furthermore, we shall denote by $\mathbb{M}_{NW}$ the set of all bandwidth parameters that can be obtained as special cases of the method in the present section, by appropriately choosing - functionally independently of $y$ and $X$ - a weights vector, numbers $w(i) \geq 0$ for $|i| = 0, \hdots, n-p-1$, $\bar{c}_1$ a positive integer, $\bar{c}_2 > 0$ and $\bar{c}_3 > 0$.
\begin{remark}
(i) The method described here is the `real-bandwidth' approach suggested in \linebreak \cite{NW94}, as opposed to the `integer-bandwidth' approach. In the latter approach one would use $1 + \left\lfloor  M_{NW, \omega, w, \bar{c}}(y) \right\rfloor$ instead of $M_{NW, \omega, w, \bar{c}}(y)$. Both approaches are asymptotically equivalent (\cite{NW94}, Theorem 2) for most kernels (including the Bartlett kernel which is suggested in \cite{NW94}). Therefore, they are equally plausible in terms of their theoretical foundation. For the sake of simplicity and comparability with the bandwidth parameter as suggested by \cite{A92}, which is not an integer in general, we have chosen to focus on the `real-bandwidth' approach. \\
(ii) \cite{NW94}, p. 637, in principle also allow for $\bar{c}_1 = 0$ ($q = 0$ in their notation) in the definition of their estimator. We do not allow for such a choice. However, note that $\bar{c}_1 = 0$ implies $M_{NW, \omega, w, \bar{c}}(y) \equiv \bar{c}_2 n^{\bar{c}_3}$. This is a data-independent bandwidth parameter. These parameters are separately treated in Section \ref{MR}. \\
\end{remark}

\subsubsection{Data-independent bandwidth parameters}\label{MR}

\cite{KV2005} and \cite{Rho2013} studied properties of prewhitened `fixed-b tests'. Here one sets $M \equiv b (n-p)$ where $b \in (0, 1]$ is functionally independent of $y$ and $X$. For example, in \cite{Rho2013} the choice $b = 1$ is studied. These approaches all lead to bandwidth parameters $M_{KV}  > 0$, that are functionally independent of both $X$ and $y$. We denote the set of such bandwidth parameters by $\mathbb{M}_{KV}$.

\subsection{Assumptions on $\kappa$, $M$ and $p$}\label{comb}

Different combinations of kernels $\kappa$, bandwidth parameters $M$ and VAR orders $p$ obviously lead to different estimators. We indicate the dependence of the estimator on these quantities by writing $\hat{\Omega}_{\kappa, M, p}$. In the present paper we shall consider estimators $\hat{\Omega}_{\kappa, M, p}$ based on a triple $\kappa$, $M$, $p$ which satisfies the following assumption:
\begin{assumption}\label{weightsPWRB}
The triple $\kappa$, $M$, $p$ satisfies:
\begin{enumerate}
	\item $\kappa: \Rel \rightarrow \Rel$ is an even function and $\kappa(0) = 1$. Furthermore, $\kappa$ is continuous, satisfies $\lim_{x \rightarrow \infty}\kappa(x) = 0$, and for every real number $s > 0$ and every positive integer $J$ the $J \times J$ symmetric Toeplitz matrix with $ij$-th coordinate $\kappa((i-j)/s)$ is positive definite.
	\item $M \in \mathbb{M}_{AM} \cup \mathbb{M}_{NW} \cup \mathbb{M}_{KV}$.
	\item $p$ is an integer satisfying $1 \leq p \leq n/(k+1)$.
\end{enumerate}
\end{assumption}
\begin{remark}\label{Rweights}
First, we remark that the positive definiteness assumption in Part 1 of Assumption \ref{weightsPWRB} is natural in our context, because it guarantees that $\hat{\Omega}_{\kappa, M, p}$ is nonnegative definite whenever it is well defined. Furthermore, it allows us to derive simple conditions for positive definiteness of $\hat{\Omega}_{\kappa, M, p}$ (cf. Lemma \ref{N*PW}). It is well known that many kernels used in practice satisfy the positive definiteness assumption, e.g., the Bartlett, Parzen, and Quadratic-Spectral kernel. Secondly, we note that in principle Assumption \ref{weightsPWRB} does not prohibit a combination of $M_{AM, 1, \omega, c} \in \mathbb{M}_{AM}$ with a second order kernel, or the combination of $M_{AM, 2, \omega, c}  \in \mathbb{M}_{AM}$ with a first order kernel. It also allows for a combination of elements of $\mathbb{M}_{NW}$ with a prewhitening order $p > 1$ and for the combination of elements of $\mathbb{M}_{KV}$ with a kernel other than the Bartlett kernel. This goes well beyond the original suggestions in \cite{A92}, \cite{NW94} and \cite{Rho2013}, but we include these additional possibilities for convenience. We also remark that since we assume throughout that $n > k$, the set of VAR orders satisfying the third part of Assumption \ref{weightsPWRB} always includes the order $p = 1$. 
\end{remark}

\begin{remark}[\textit{Tuning parameters depending on the design}]\label{Rtuning}
The tuning parameters used in the construction of $M \in \mathbb{M}_{AM} \cup \mathbb{M}_{NW} \cup \mathbb{M}_{KV}$, e.g., the weights vector $\omega$ used in the construction of $M \in \mathbb{M}_{AM} \cup \mathbb{M}_{NW}$, are by definition functionally independent of $y$ \textit{and} $X$. Requiring that the tuning parameters are independent of $X$ is not a restriction in all results of the present paper in which the design matrix $X$ is \textit{fixed} (i.e., Theorem \ref{thmlrvPW}, Proposition \ref{excPW}, and Theorem \ref{TU_3}). To see this, suppose that a design matrix $X$ as in \eqref{lm} is given, that $\kappa$ and $p$ satisfy the first and third part of Assumption \ref{weightsPWRB}, respectively, and that $M$ is constructed as in one of the Sections \ref{MA92}, \ref{MNW94}, \ref{MR}, but with a vector of tuning parameters $c^*(.)$, say, that is not constant on $\mathfrak{X}_0$. The triple $\kappa, M, p$ hence does not satisfy Assumption \ref{weightsPWRB}. Let $\tilde{M}$ be the bandwidth parameter that is obtained from $M$ by replacing the vector of tuning parameters $c^*(.)$ by $\tilde{c} \equiv c^*(X)$. Clearly, $\kappa, \tilde{M}, p$ satisfies Assumption \ref{weightsPWRB}, and the test statistics as in Equation \eqref{tslrv} based on $\hat{\Omega}_{\kappa, \tilde{M}, p}$ and $\hat{\Omega}_{\kappa,M, p}$, respectively, coincide for this specific $X$. 
\end{remark}

\subsection{Structural properties of prewhitened covariance estimators} \label{struct}

The study of finite sample properties of a test based on the statistic in Equation \eqref{tslrv} with $\hat{\Omega} = \hat{\Omega}_{\kappa, M, p}$ requires a detailed understanding of definiteness properties of the covariance estimator $\hat{\Omega}_{\kappa, M, p}$, and of the structure of the set $N^*(\hat{\Omega}_{\kappa, M, p})$. Denoting the subset of the sample space $\Rel^n$ where $\hat{\Omega}_{\kappa, M, p}$ is not well defined by $N(\hat{\Omega}_{\kappa, M, p})$, we can write
\begin{equation}
N^*(\hat{\Omega}_{\kappa, M, p}) = N(\hat{\Omega}_{\kappa, M, p}) \cup \geschw{y \in \Rel^n \backslash N(\hat{\Omega}_{\kappa, M, p}): \det(\hat{\Omega}_{\kappa, M, p}(y)) = 0}.
\end{equation}
As a first step we study $N(\hat{\Omega}_{\kappa, M, p})$ in the subsequent lemma, where it is shown that $N(\hat{\Omega}_{\kappa, M, p})$ is algebraic. The lemma also characterizes the dependence of $N(\hat{\Omega}_{\kappa, M, p})$ on the design matrix, which will later be useful for obtaining our genericity results.
\begin{lemma} \label{NPWRB}
Assume that the triple $\kappa$, $M$, $p$ satisfies Assumption \ref{weightsPWRB}. Then,
\begin{equation}
N(\hat{\Omega}_{\kappa, M, p}) = \geschw{y \in \Rel^n: g_{\kappa, M, p}(y, X) = 0},
\end{equation}
where $g_{\kappa, M, p}: \Rel^n \times \Rel^{n \times k} \rightarrow \Rel$ is a multivariate polynomial (explicitly constructed in the proof). As a consequence $N(\hat{\Omega}_{\kappa, M, p})$ is an algebraic set. Furthermore, $g_{\kappa, M, p}$ does not depend on the hypothesis $(R,r)$.  
\end{lemma}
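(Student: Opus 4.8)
The plan is to trace through the three-step construction of $\hat{\Psi}$ and identify, at each point where ``well-definedness'' is invoked, the precise algebraic obstruction to it, and then to collect all these obstructions into a single polynomial whose zero set is exactly $N(\hat{\Omega}_{\kappa, M, p})$. Recall from Remark \ref{WDPSI} that well-definedness fails at $y$ precisely when one of three things happens: (i) $\hat{A}^{(p)}(y)$ is not defined, i.e. $\rank(\hat{V}_1(y)) < kp$; (ii) the bandwidth $M(y)$ is not well defined; or (iii) the recoloring matrix $I_k - \sum_{l=1}^p \hat{A}^{(p)}_l(y)$ is singular. The strategy is to show each of these three conditions is the vanishing of a polynomial in $(y, X)$, and that $N(\hat{\Omega}_{\kappa, M, p})$ is the union of the corresponding zero sets, hence itself the zero set of the product polynomial $g_{\kappa, M, p}$.

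First I would handle (i). The entries of $\hat{u}(y) = (I_n - X(X'X)^{-1}X')y$ are rational in $(y,X)$ with denominator $\det(X'X)$, and $\hat{V}(y) = X'\diag(\hat{u}(y))$, $\hat{V}_1(y)$ are built from these by selecting and shifting columns, so they are rational in $(y,X)$ with a denominator that is a power of $\det(X'X)$. Condition (i) is $\rank(\hat V_1(y))<kp$, equivalently $\det(\hat V_1(y)\hat V_1'(y))=0$; clearing the $\det(X'X)$ denominator yields a polynomial $g_1(y,X)$ whose vanishing (on the locus $\det(X'X)\neq 0$, which is guaranteed since $X\in\mathfrak X_0$) captures exactly this failure. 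Once $\hat A^{(p)}(y)$ is defined, it too is rational in $(y,X)$ over a power of $g_1$ times $\det(X'X)$, so condition (iii), namely $\det(I_k-\sum_l \hat A^{(p)}_l(y))=0$, likewise clears to a polynomial $g_3(y,X)$. The multiplicative structure of the determinant ensures these denominators never introduce spurious zeros on $\mathfrak X_0$.

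The main obstacle will be (ii), the well-definedness of $M(y)$, because this must be verified uniformly across the three qualitatively different bandwidth families in Assumption \ref{weightsPWRB}. For $M\in\mathbb{M}_{KV}$ this is trivial, since $M\equiv b(n-p)$ is constant and always defined, contributing nothing. For $M\in\mathbb{M}_{AM}$ the quantities $\hat\rho_i(y)$ and $\hat\sigma_i^2(y)$ require the univariate denominators $\sum_j \hat Z_{ij}(y)^2$ to be nonzero, and the construction of $\hat\alpha_j(y)$ requires the factors $(1-\hat\rho_i(y))$ and $(1+\hat\rho_i(y))$ in the denominators, as well as the overall normalizing sum, to be nonzero; each such requirement is again the non-vanishing of a polynomial in $(y,X)$ after clearing the rational $\hat Z$-denominators, so their product gives $g_2^{AM}$. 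For $M\in\mathbb{M}_{NW}$ the only additional denominator is $\sum_i w(i)\bar\sigma_i(y)$, giving a polynomial factor $g_2^{NW}$; note here that $\bar\sigma_i(y)=\omega'\check\Gamma_i(y)\omega$ is itself polynomial in $\hat Z(y)$. The care required is to confirm that these are genuinely the \emph{only} failure points for $M$ and that no further degeneracy (e.g. a negative argument under a non-integer power $\bar c_3$) can occur — here I would use that each $\bar\sigma_i$-sum and each $\hat\alpha_j$ is manifestly a ratio of quantities that, where defined, stay real, so $M(y)$ is real and finite exactly off the zero set of these denominators.

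Finally I would set $g_{\kappa,M,p}(y,X)$ equal to the product of $g_1$, $g_3$, and whichever of $g_2^{AM}$, $g_2^{NW}$ (or nothing, for $\mathbb{M}_{KV}$) applies, multiplied by a suitable power of $\det(X'X)$ to absorb the outermost denominator, and verify that its zero set on $\mathfrak X_0$ is precisely $N(\hat\Omega_{\kappa,M,p})$: a point lies in $N$ iff at least one of (i)–(iii) fails iff at least one factor vanishes iff $g_{\kappa,M,p}=0$. This exhibits $N(\hat\Omega_{\kappa,M,p})$ as an algebraic set and makes the functional dependence on $X$ fully explicit, as the statement requires. The assertion that $g_{\kappa,M,p}$ does not depend on $(R,r)$ is then immediate by inspection: every ingredient entering Steps (1)–(3) and the bandwidth construction depends only on $\hat u(y)$, $X$, and the fixed tuning constants, and $R$ and $r$ enter $T$ only through $R\hat\beta(y)-r$ and the recoloring-free parts of $\hat\Omega$, never through the well-definedness conditions for $\hat\Psi$.
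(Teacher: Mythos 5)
Your proposal is correct and takes essentially the same route as the paper's own proof: the same decomposition into the three failure conditions of Remark \ref{WDPSI}, the same clearing of the $\det(X'X)$- and $\hat{V}_1\hat{V}_1'$-denominators via adjugates to obtain genuine polynomials in $(y,X)$, the same case-by-case treatment of $\mathbb{M}_{KV}$, $\mathbb{M}_{AM}$ and $\mathbb{M}_{NW}$, and the same assembly of $g_{\kappa,M,p}$ as a product whose zero set is $N(\hat{\Omega}_{\kappa,M,p})$, with $(R,r)$-independence read off from the construction. The only differences are cosmetic, e.g.\ the paper merges `$\hat{\rho}_i$ well defined' and `$\hat{\rho}_i^2 \neq 1$' into a single factor $\bigl[\sum_j \hat{Z}_{ij}\hat{Z}_{i(j-1)}\bigr]^2 - \bigl[\sum_j \hat{Z}_{ij}^2\bigr]^2$ per coordinate, whereas you keep separate cleared factors; both yield the same zero set.
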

The subsequent lemma discusses definiteness and regularity properties of $\hat{\Omega}_{\kappa, M, p}$ and shows that $N^*(\hat{\Omega}_{\kappa, M, p})$ is an algebraic subset of $\Rel^n$. Again the dependence of this algebraic set on the design is clarified. Given a prewhitening order $p$ satisfying Part 3 of Assumption \ref{weightsPWRB}, we define for every $y \in \Rel^n$ such that $\hat{A}^{(p)}(y)$ is well defined and such that $I_k - \sum_{l = 1}^p \hat{A}^{(p)}_l(y)$ is invertible the matrix
\begin{equation}\label{defB}
B_p(y) = R(X'X)^{-1}\left(I_k - \sum_{l = 1}^p \hat{A}^{(p)}_l(y)\right)^{-1} \hat{Z}(y).
\end{equation}
\begin{lemma}\label{N*PW}
Assume that the triple $\kappa$, $M$, $p$ satisfies Assumption \ref{weightsPWRB}. Then the following holds:
\begin{enumerate}
	\item $\hat{\Omega}_{\kappa, M, p}(y)$ is nonnegative definite if and only if $g_{\kappa, M, p}(y, X) \neq 0$.
	\item $\hat{\Omega}_{\kappa, M, p}(y)$ is singular if and only if $g_{\kappa, M, p}(y, X) \neq 0$ and $\rank({B}_p(y)) < q$.
	\item $\hat{\Omega}_{\kappa, M, p}(y) = 0$ if and only if $g_{\kappa, M, p}(y, X) \neq 0$ and ${B}_p(y) = 0$. 
	\item $\hat{\Omega}_{\kappa, M, p}(y)$ is positive definite if $g_{\kappa, M, p}(y, X) \neq 0$ and $\rank(\hat{Z}(y)) = k$. 
	\item We have 
	\begin{equation}
	N^*(\hat{\Omega}_{\kappa, M, p}) = \geschw{y \in \Rel^n : g^*_{\kappa, M, p}(y, X, R) = 0},
	\end{equation}
where $g^*_{\kappa, M, p}: \Rel^n \times \Rel^{n \times k} \times \Rel^{q \times k} \rightarrow \Rel$ is a multivariate polynomial (explicitly constructed in the proof). As a consequence $N^*(\hat{\Omega}_{\kappa, M, p})$ is an algebraic set. Furthermore, $g^*_{\kappa, M, p}$ is independent of $r$.
\end{enumerate}
\end{lemma}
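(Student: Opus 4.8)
The engine of the whole proof is a single quadratic-form representation of $\hat{\Omega}_{\kappa, M, p}$. Fix $y$ with $g_{\kappa, M, p}(y, X) \neq 0$, so that by Lemma \ref{NPWRB} the estimator is well defined at $y$; in particular $\hat{V}_1(y)$ has full rank, $M(y)$ is a well-defined nonnegative real number, and $G(y) := I_k - \sum_{l=1}^p \hat{A}^{(p)}_l(y)$ is invertible. Let $K(y)$ be the $(n-p)\times(n-p)$ symmetric Toeplitz matrix with $ab$-th entry $\kappa((a-b)/M(y))$ (with $K(y) = I_{n-p}$ when $M(y)=0$, consistent with the convention in Step 2). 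Grouping the terms in the definitions of $\check{\Gamma}_i$ and $\check{\Psi}$ by the column-index difference $a-b$ yields the identity $\check{\Psi}(y) = \frac{1}{n-p}\hat{Z}(y) K(y) \hat{Z}(y)'$; combining this with $\hat{\Psi}(y) = G(y)^{-1}\check{\Psi}(y)(G(y)^{-1})'$ and the definition \eqref{defB} of $B_p$ gives
\[
\hat{\Omega}_{\kappa, M, p}(y) = \tfrac{n}{n-p}\, B_p(y)\, K(y)\, B_p(y)'.
\]
By Part 1 of Assumption \ref{weightsPWRB} together with the $M(y)=0$ convention, $K(y)$ is positive definite on $\{g_{\kappa, M, p}(\cdot, X) \neq 0\}$.

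Claims 1--4 then reduce to routine linear algebra. Writing $K(y) = L(y)L(y)'$ with $L(y)$ invertible, the representation becomes $\hat{\Omega}_{\kappa, M, p}(y) = \frac{n}{n-p}(B_p(y)L(y))(B_p(y)L(y))'$, which is nonnegative definite whenever it is well defined, satisfies $\rank \hat{\Omega}_{\kappa, M, p}(y) = \rank(B_p(y)L(y)) = \rank B_p(y)$, and vanishes iff $B_p(y)L(y) = 0$ iff $B_p(y) = 0$. Since $\hat{\Omega}_{\kappa, M, p}(y)$ is $q \times q$, this delivers Claims 1, 2 and 3 (the forward implications using that being nonnegative definite, singular, or zero presupposes well-definedness, i.e.\ $g_{\kappa, M, p}(y,X) \neq 0$). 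For Claim 4, if additionally $\rank \hat{Z}(y) = k$, then $\check{\Psi}(y) = \frac{1}{n-p}\hat{Z}(y)K(y)\hat{Z}(y)'$ is positive definite, hence so is $\hat{\Psi}(y)$, and since $R(X'X)^{-1}$ has full row rank $q$ the matrix $\hat{\Omega}_{\kappa, M, p}(y) = nR(X'X)^{-1}\hat{\Psi}(y)(R(X'X)^{-1})'$ is positive definite.

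For Claim 5, write $N^*(\hat{\Omega}_{\kappa, M, p}) = \{g_{\kappa, M, p}(\cdot,X) = 0\} \cup (\{g_{\kappa, M, p}(\cdot,X) \neq 0\} \cap \{\det \hat{\Omega}_{\kappa, M, p} = 0\})$, and observe that on $\{g_{\kappa, M, p}(\cdot,X) \neq 0\}$ Claim 2 turns $\det \hat{\Omega}_{\kappa, M, p}(y) = 0$ into the purely algebraic condition $\rank B_p(y) < q$, which no longer involves $\kappa$ or $M$. The plan is to clear denominators. Every entry of $\hat{u}(y) = (I - X(X'X)^{-1}X')y$, and hence of $\hat{V}$, $\hat{V}_1$, $\hat{V}_p$, $\hat{A}^{(p)} = \hat{V}_p \hat{V}_1'(\hat{V}_1\hat{V}_1')^{-1}$, $\hat{Z}$, $G$ and $B_p = R(X'X)^{-1}G^{-1}\hat{Z}$, is a rational function of $(y,X)$, linear in $R$, whose denominator is a product of powers of $\det(X'X)$, $\delta_1 := \det(\hat{V}_1\hat{V}_1')$ and $\det G$. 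Multiplying $B_p$ by a suitable polynomial $h(y,X)$ — a product of powers of $\det(X'X)$, $\delta_1$, and the polynomial obtained by clearing the denominator of $\det G$ — produces a matrix $\tilde{B}(y,X,R)$ with entries polynomial in $(y,X,R)$. For the fixed full-rank $X$ each factor of $h$ is nonzero precisely off $N(\hat{\Omega}_{\kappa, M, p}) = \{g_{\kappa, M, p}(\cdot,X) = 0\}$ (this is exactly what well-definedness encodes, together with $\det(X'X) \neq 0$), so $\rank \tilde{B}(y,X,R) = \rank B_p(y)$ there. Letting $\tilde{B}_1, \dots, \tilde{B}_m$ be the $q \times q$ minors of $\tilde{B}$ — polynomials in $(y,X,R)$ not involving $r$ — and setting
\[
g^*_{\kappa, M, p}(y,X,R) = g_{\kappa, M, p}(y,X) \cdot \sum_{l=1}^m \tilde{B}_l(y,X,R)^2,
\]
one verifies that on $\{g_{\kappa, M, p}(\cdot,X) \neq 0\}$ the vanishing of $g^*_{\kappa, M, p}$ is equivalent to $\rank B_p < q$, i.e.\ to $\det \hat{\Omega}_{\kappa, M, p} = 0$, while $\{g_{\kappa, M, p}(\cdot,X) = 0\} \subseteq \{g^*_{\kappa, M, p}(\cdot,X,R) = 0\}$. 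The displayed decomposition of $N^*$ then gives $\{g^*_{\kappa, M, p}(\cdot,X,R) = 0\} = N^*(\hat{\Omega}_{\kappa, M, p})$, and independence of $g^*_{\kappa, M, p}$ from $r$ is inherited from $g_{\kappa, M, p}$ and from $B_p$, neither of which involves $r$.

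The main obstacle is Claim 5: exhibiting $g^*_{\kappa, M, p}$ as a genuine joint polynomial forces one to track all the rational dependence on $X$ (which enters already through $\hat{u}$) and on $y$, and to choose the clearing factor $h$ so that it vanishes only inside $N(\hat{\Omega}_{\kappa, M, p})$ — so that multiplying neither destroys nor creates zeros of the rank condition on the well-definedness set. The representation $\hat{\Omega}_{\kappa, M, p} = \frac{n}{n-p}B_p K B_p'$ with $K$ positive definite is what makes the singularity condition independent of the transcendental quantities $\kappa$ and $M$, and is therefore the conceptual crux underpinning all five parts.
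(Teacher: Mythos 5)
Your proposal is correct and takes essentially the same route as the paper: the same quadratic-form representation $\hat{\Omega}_{\kappa,M,p}(y) = \frac{n}{n-p}\,B_p(y)\,\mathcal{W}_{n-p}(y)\,B_p'(y)$ with a positive definite Toeplitz matrix delivers Parts 1--4, and Part 5 is proved by clearing the rational (in $(y,X)$) denominators of $B_p$ with a factor that is nonvanishing off $N(\hat{\Omega}_{\kappa,M,p})$ and multiplying $g_{\kappa,M,p}$ by a polynomial detecting the rank deficiency of the cleared matrix. Even your one apparent deviation is cosmetic: by the Cauchy--Binet formula your sum of squared $q\times q$ minors $\sum_{l}\tilde{B}_l(y,X,R)^2$ equals the paper's $\det\bigl[\bar{B}_{p,X}(y)\bar{B}_{p,X}'(y)\bigr]$, so the two constructions of $g^*_{\kappa,M,p}$ coincide.
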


It is a well known fact that an algebraic subset of $\Rel^n$ is either a closed $\lambda_{\Rel^n}$-null set, or coincides with $\Rel^n$ (for a proof see, e.g., \cite{okamoto1973}). The latter case occurs if and only if a (multivariate) polynomial defining the algebraic set vanishes everywhere. Together with Part 5 of Lemma \ref{N*PW} this implies that $N^*(\hat{\Omega}_{\kappa, M, p})$ is either a closed $\lambda_{\Rel^n}$-null set, or coincides with $\Rel^n$, depending on whether $g^*_{\kappa, M,p}(., X, R) \not \equiv 0$ or $g^*_{\kappa, M,p}(., X, R) \equiv 0$ holds, respectively. In the latter case, every test based on the test statistic defined in Equation \eqref{tslrv} with $\hat{\Omega} = \hat{\Omega}_{\kappa, M, p}$ trivially breaks down, because in this case the test statistic vanishes identically on $\Rel^n$. Obviously, studying size and power properties of tests based on this test statistic in a sample of size $n$ is only interesting, if we can guarantee that $g^*_{\kappa, M,p}(., X, R) \not \equiv 0$ holds for a sufficiently large set of design matrices. That this is indeed the case is the content of the subsequent lemma. More precisely it is shown that $g^*_{\kappa, M,p}(., X, R) \not \equiv 0$ is generically satisfied whenever $n$ exceeds a certain threshold. It is also shown that the threshold we give can not be substantially improved. The notion of genericity employed is further discussed in Remark \ref{Rgenericityomega} following the lemma. 
\begin{lemma}\label{nullN*PW}
Assume that the triple $\kappa$, $M$, $p$ satisfies Assumption \ref{weightsPWRB}. Then the following holds:
\begin{enumerate}
\item If $n < k(p+1) + p$ and $q = k$, then 
\begin{equation}
g^*_{\kappa, M, p}(., X, R) \equiv 0 \text{ for every } X \in \mathfrak{X}_0.  
\end{equation}
\item If $k(p+1) + p + \mathbf{1}_{\mathbb{M}_{AM}}(M) \leq n$, then 
\begin{equation}
g^*_{\kappa, M, p}(., X, R) \not \equiv 0 \text{ for } \lambda_{\Rel^{n \times k}}\text{-almost every } X \in \mathfrak{X}_0;
\end{equation}
if $k = 1$ we have in particular $g^*_{\kappa, M, p}(., e_+, R) \not \equiv 0$.
\item If $k \geq 2$ and $k(p+1) + p^* +   \mathbf{1}_{\mathbb{M}_{AM}}(M) \leq n$, where $p^* = p + (p \bmod 2)$, then 
\begin{equation}
g^*_{\kappa, M, p}(., (e_{+}, \tilde{X}), R) \not \equiv 0 \text{ for } \lambda_{\Rel^{n \times (k-1)}}\text{-almost every } \tilde{X} \in \mathfrak{\tilde{X}}_0.
\end{equation} 
\end{enumerate}
\end{lemma}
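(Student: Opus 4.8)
The plan is to reduce all three parts to a single statement about the row rank of a block-Toeplitz matrix, and then to dispatch the ``almost every'' claims by exhibiting one good design. The starting point is Lemma \ref{N*PW}: by its part 4, $\hat\Omega_{\kappa,M,p}(y)$ is positive definite (so $y\notin N^*$) as soon as $g_{\kappa,M,p}(y,X)\neq 0$ and $\rank(\hat Z(y))=k$, so to prove $g^*_{\kappa,M,p}(\cdot,X,R)\not\equiv 0$ it suffices to produce a single $y$ at which the estimator is well defined with $\hat Z(y)$ of full row rank $k$. Writing $\hat V_{\cdot j}(y)=\hat u_j(y)\,x_j$, where $x_j\in\Rel^k$ is the $j$-th row of $X$, one has $\hat Z(y)=\hat V_p(y)M_1$ with $M_1$ the orthogonal projection onto the orthocomplement of the row space of $\hat V_1(y)$, whence
\begin{equation}
\rank(\hat Z(y))=\rank\begin{pmatrix}\hat V_p(y)\\ \hat V_1(y)\end{pmatrix}-\rank(\hat V_1(y)).
\end{equation}
The stacked matrix here is the $k(p+1)\times(n-p)$ block-Toeplitz matrix $H$ whose $j$-th column is $(\hat V_{\cdot(j+p)}',\ldots,\hat V_{\cdot j}')'$; hence both well-definedness of $\hat A^{(p)}(y)$ and $\rank(\hat Z(y))=k$ follow once $H$ has full row rank $k(p+1)$, which is possible only when $n-p\geq k(p+1)$, i.e. at the threshold $n\geq k(p+1)+p$.

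Part 1 is the degenerate converse. If $n<k(p+1)+p$ and $q=k$, then wherever $\hat A^{(p)}(y)$ is defined the rows of $\hat Z(y)$ lie in a space of dimension $(n-p)-kp<k$, so $\rank(\hat Z(y))<k$; since $q=k$ forces $R$ to be square and invertible, $B_p(y)=R(X'X)^{-1}(I_k-\sum_l\hat A^{(p)}_l(y))^{-1}\hat Z(y)$ satisfies $\rank(B_p(y))=\rank(\hat Z(y))<q$, so by part 2 of Lemma \ref{N*PW} the matrix $\hat\Omega_{\kappa,M,p}(y)$ is singular or undefined at \emph{every} $y$. Therefore $N^*(\hat\Omega_{\kappa,M,p})=\Rel^n$ and $g^*_{\kappa,M,p}(\cdot,X,R)\equiv 0$ for every $X\in\mathfrak{X}_0$.

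For parts 2 and 3 I first record the genericity mechanism. Since $g^*_{\kappa,M,p}(y,X,R)$ is, for fixed $R$, a polynomial in $(y,X)$, the set of $X$ with $g^*(\cdot,X,R)\equiv 0$ is the common zero set of its $y$-coefficients (polynomials in $X$) and so, by the dichotomy for algebraic sets invoked after Lemma \ref{N*PW}, is either a $\lambda_{\Rel^{n\times k}}$-null set or all of $\Rel^{n\times k}$; the latter occurs only if $g^*$ vanishes identically in $(y,X)$. Thus it suffices to exhibit one full-rank $X_0$ and one $y_0$ with $\hat\Omega_{\kappa,M,p}(y_0)$ well defined and nonsingular (and, for part 3, the same with $X_0=(e_+,\tilde X_0)$, reducing over $\tilde X$). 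Because $\hat u(y_0)$ ranges over all of $\lspan(X_0)^{\bot}$ and $\hat u(u)=u$ for $u\in\lspan(X_0)^{\bot}$, I may prescribe the residual $u$ directly; the vectors $v_j:=u_jx_j$ are then free subject only to $\sum_j v_j=X_0'u=0$. For part 2 I choose $v_1,\ldots,v_n$ with $\sum_j v_j=0$ so that the associated $H$ has full row rank $k(p+1)$ (an explicit such zero-sum sequence can be written down once $n\geq k(p+1)+p$), recover $X_0$ via $x_j=v_j/u_j$ (full column rank being automatic from the top $k$ rows of $H$), and verify invertibility of $I_k-\sum_l\hat A^{(p)}_l$ and well-definedness of $M$; for $k=1$ the fixed design $e_+$ already works, giving the stated $g^*(\cdot,e_+,R)\not\equiv 0$. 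The bandwidth check is the only place where, for $M\in\mathbb{M}_{AM}$, an extra coordinate of $\hat Z$ must be controlled so that the ratios defining $\hat\rho_i,\hat\sigma_i^2,\hat\alpha_j$ have nonvanishing denominators, which costs one more column and accounts for the term $\mathbf 1_{\mathbb{M}_{AM}}(M)$.

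The main obstacle is part 3. Fixing the first column to $e_+$ forces $x_{j,1}=1$, hence $v_{j,1}=u_j$, so the admissible configurations are those $v$ with $\sum_j v_j=0$ whose \emph{every} first coordinate is nonzero (needed to reconstruct $\tilde X_0$ from $x_j=v_j/u_j$). This constraint is incompatible with the simplest ``basis-vector'' sequences that make $H$ full rank, so the construction must be reworked to keep all first coordinates nonzero while preserving full row rank of the block-Toeplitz matrix; carrying this out forces a pairing of windows that leaves one unpaired when $p$ is odd, which is exactly why the threshold degrades from $p$ to $p^*=p+(p\bmod 2)$ and why $k\geq 2$ is required (the extra coordinates absorb the nonvanishing-first-coordinate constraint). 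Checking simultaneously that $I_k-\sum_l\hat A^{(p)}_l$ is invertible and, in the AM case, that the bandwidth is well defined on this constrained family is the delicate part; everything else is the polynomial/genericity bookkeeping set up above.
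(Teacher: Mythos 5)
Your Part 1 argument is correct and is essentially the paper's own: the dimension count $(n-p)-kp<k$ forces $\rank(\hat Z(y))<k$ wherever the estimator is defined, and with $q=k$ this makes $B_p(y)$ rank deficient, so $N^*(\hat\Omega_{\kappa,M,p})=\Rel^n$ by Lemma \ref{N*PW}. The genericity mechanism for Parts 2 and 3 (polynomiality of $g^*_{\kappa,M,p}$ in $(y,X)$, so it suffices to exhibit one pair where it does not vanish) is also sound and is how the paper argues. The gap is in the exhibition itself, which is where the substance of the proof lies. Your reduction stops at ``$H$ has full row rank $k(p+1)$'', but that only delivers well-definedness of $\hat A^{(p)}(y)$ and $\rank(\hat Z(y))=k$; it delivers neither invertibility of $I_k-\sum_{l}\hat A^{(p)}_l(y)$ nor well-definedness of the bandwidth, and these are exactly the conditions you defer (``verify invertibility \ldots and well-definedness of $M$'', ``the delicate part''). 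They are not routine: for $M\in\mathbb{M}_{NW}$ the denominator $\sum_i w(i)\bar\sigma_i(y)$ can vanish by cancellation since the $\bar\sigma_i(y)$ need not be nonnegative, and your proposal never addresses the NW case at all; for $M\in\mathbb{M}_{AM}$ one needs each $\hat\rho_i(y)$ well defined with $\hat\rho_i^2(y)\neq 1$ and a nonvanishing denominator in $\hat\alpha_j(y)$. The paper (Lemma \ref{AUXCONSTR}) resolves all of this with one device your plan lacks: place the nonzero columns of $\hat V_X(y)$ at indices spaced at least $p+1$ apart, so that $\hat V_{p}(y)\hat V_{1}'(y)=0$ and hence $\hat A^{(p)}(y)=0$ exactly. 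This simultaneously trivializes invertibility of $I_k-\sum_l\hat A^{(p)}_l$, makes $\hat Z=\hat V_p$ explicit, forces $\hat\rho_i(y)=0$ (settling the AM case, where the extra column in the threshold then guarantees $\hat\sigma_i^2(y)>0$), and permits choosing the auxiliary matrix $H$ with $H\omega=(-1,1,0,\ldots,0)'$ so that the coordinates of $\omega'\hat Z(y)$ all have one sign (settling the NW case). Without this device, or an equivalent one, full row rank of the block-Toeplitz matrix does not close the proof.

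Your Part 3 narrative also misdiagnoses the threshold. The constraint you identify (every nonzero column of $\hat V$ must have nonzero first coordinate, so the design rows $x_j=v_j/u_j$ can have first coordinate one) is the right one, but the claim that it ``forces a pairing of windows'' producing $p^*$ is not an argument, and in your setting, where $y$ is free, no such obstruction exists: one can keep all spacings equal to $p+1$ and choose $h_1,\ldots,h_{k+1}$ summing to zero, spanning $\Rel^k$, all with nonzero first coordinates and with $\omega'h_i$ of one sign for $i\geq 2$ (for $k=2$, e.g., $(1,-1)'$, $(1,1)'$, $(-2,0)'$ with $\omega=(0,1)'$), so your route would in fact prove Part 3 under the weaker threshold $k(p+1)+p+\mathbf{1}_{\mathbb{M}_{AM}}(M)\leq n$. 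The $p^*$ in the paper is an artifact of proving something stronger: Proposition \ref{generic} needs $g^*$ to be nonzero at the specific point $y=e_-$ (Theorem \ref{thmlrvPW} is formulated at $e_\pm$, and $g^*(e_+,(e_+,\tilde X),R)=0$ always because $\mathfrak M\subseteq N^*$), and when $p$ is odd the coordinates of $e_-$ at indices spaced $p+1$ apart are constant, which places the selected subvector of $e_-$ in the span of the intercept and kills the residual; widening the last gap to $p^*+1$ repairs this. Lemma \ref{nullN*PW} inherits $p^*$ only because the paper deduces its Parts 2 and 3 from Proposition \ref{generic}. Likewise, $k\geq 2$ in Part 3 is purely definitional (otherwise there is no $\tilde X$), not a consequence of your first-coordinate constraint.
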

\begin{remark} \label{Rgenericityomega}
(1) Part 1 demonstrates that if $n$ is too small in the sense that $n < k(p+1) + p$, then for every $X \in \mathfrak{X}_0$ the test statistic in Equation \eqref{tslrv} with $\hat{\Omega} = \hat{\Omega}_{\kappa, M, p}$ vanishes identically if $q = k$ holds, because the estimator $\hat{\Omega}_{\kappa, M, p}$ is either not well defined or singular at every observation $y$. This shows that one can in general not expect that $N^*(\hat{\Omega}_{\kappa, M, p})$ is generically a $\lambda_{\Rel^n}$-null set in case $n < k(p+1) + p$. 

(2) Under the assumption that $k(p+1) + p + \mathbf{1}_{\mathbb{M}_{AM}}(M) \leq n$ holds, Part 2 establishes genericity of $g^*_{\kappa, M, p}(., X, R) \not \equiv 0$ in that it shows that the statement holds for $\lambda_{\Rel^{n \times k}}$- almost every $X \in \mathfrak{X}_0$. This notion of genericity is obviously related to situations, where the data-generating process underlying the design matrix $X$ is assumed to be absolutely continuous w.r.t. $\lambda_{\Rel^{n \times k}}$. In this situation, a bandwidth parameter $M \in \mathbb{M}_{AM} \cup \mathbb{M}_{NW}$ would typically be based on the weights vector $\omega = (1, \hdots, 1)' \in \Rel^k$. As a specific result of independent interest it is also shown that if $k = 1$ then $g^*_{\kappa, M, p}(., e_+, R) \not \equiv 0$ holds, which means that in the location model the set $N^*(\hat{\Omega}_{\kappa, M, p})$ is a $\lambda_{\Rel^n}$- null set. 

(3) Under the assumption that $k \geq 2$ and $k(p+1) + p^* +   \mathbf{1}_{\mathbb{M}_{AM}}(M) \leq n$ holds, Part 3 establishes genericity of $g^*_{\kappa, M, p}(., (e_+, \tilde{X}), R) \not \equiv 0$ by showing that the statement holds for $\lambda_{\Rel^{n \times (k-1)}}$ almost every $\tilde{X} \in \tilde{\mathfrak{X}}_0$. This is a genericity statement concerning design matrices the first column of which is the intercept. In contrast to (2) this notion of genericity is related to situations, where the first column of the design matrix is fixed and the data-generating process underlying the remaining columns is absolutely continuous w.r.t. $\lambda_{\Rel^{n \times (k-1)}}$. In such a setup the construction of a bandwidth parameter $M \in \mathbb{M}_{AM} \cup \mathbb{M}_{NW}$ would typically be based on the weights vector $\omega = (0, 1 \hdots, 1)' \in \Rel^k$. 
\end{remark}

\section{A negative result and its generic applicability}\label{neg}

In the first part of this section we obtain our main negative result concerning finite sample properties of tests based on prewhitened nonparametric covariance estimators. For this result to hold, we have to impose a richness assumption on the covariance model $\mathfrak{C}$. Let $\mathfrak{C}_{AR(1)}$ denote the set of all correlation matrices corresponding to stationary autoregressive processes of order one, i.e., $\mathfrak{C}_{AR(1)} = \geschw{\Lambda(\rho): \rho \in (-1, 1)}$, where $\Lambda(\rho)_{ij} = \rho^{|i-j|}$ for $1 \leq i,j \leq n$. The assumption is as follows.
\begin{assumption}\label{AAR(1)}
$ \mathfrak{C}_{AR(1)} \subseteq \mathfrak{C}$.
\end{assumption}
\begin{remark}\label{singboundary}
Assumption \ref{AAR(1)} implies in particular that the singular boundary of $\mathfrak{C} \subseteq \Rel^{n \times n}$, i.e., the set of singular matrices in $\boundary{\mathfrak{C}}$, contains at least the two elements $e_+ e_+'$ and $e_- e_-'$, where $e_+ = (1, \hdots, 1)'$ and $e_- = (-1, 1, \hdots, (-1)^n)'$. We note that these two singular matrices can be approximated by sequences $\Lambda(\rho_m) \in \mathfrak{C}$ with $\rho_m \rightarrow 1$ and $\rho_m \rightarrow -1$, respectively, where $\rho_m \in (-1,1)$. 
\end{remark}
Since the procedures we study in the present paper are geared towards situations such as $\mathfrak{C} \supseteq \mathfrak{C}_{\xi}$ for some $\xi \geq 0$ (cf. Remark \ref{stationary}), covariance models which clearly satisfy the above assumption, Assumption \ref{AAR(1)} is mild in our context (cf. also the discussion in Section 3.2.2 of \cite{PP13}). Under this assumption and given a hypothesis $(R,r)$, the subsequent theorem provides four sufficient conditions on the design matrix under which a test based on a test statistic as in Equation \eqref{tslrv} with $\hat{\Omega} = \hat{\Omega}_{\kappa, M, p}$, together with an arbitrary (but data-independent) critical value $0 < C < \infty$, breaks down in terms of its finite sample size and/or power properties. More precisely, Conditions (1) and (4) imply that the test has size equal to one, Condition (3) implies that the test has size not smaller than $1/2$, and Condition (2) implies that the nuisance-minimal rejection probability equals zero at every point $\mu_1 \in \mathfrak{M}_1$.
\begin{theorem}\label{thmlrvPW}
Suppose that the triple $\kappa$, $M$, $p$ satisfies Assumption \ref{weightsPWRB} and that $\mathfrak{C}$ satisfies Assumption \ref{AAR(1)}. Let $T$ be the test statistic defined in \eqref{tslrv} with $\hat{\Omega} = \hat{\Omega}_{\kappa, M, p}$. Let $W(C)=\left\{y\in \mathbb{R}^{n}:T(y)\geq C\right\}$ be the rejection region, where $C$ is a real number satisfying $0<C<\infty $. Then the following holds:
\begin{enumerate}
\item Suppose $g^*_{\kappa, M, p}(e_{+}, X, R) \neq 0$ and $T(e_{+}+\mu_{0}^{\ast })>C$ holds for some (and hence all) $\mu _{0}^{\ast }\in \mathfrak{M}_{0}$, or $g^*_{\kappa, M, p}(e_{-}, X, R) \neq 0$ and $T(e_{-}+\mu _{0}^{\ast })>C$ holds for some (and hence all) $\mu _{0}^{\ast}\in \mathfrak{M}_{0}$. Then 
\begin{equation*}
\sup\limits_{\Sigma \in \mathfrak{C}}P_{\mu _{0},\sigma ^{2}\Sigma }\left(W \left( C\right) \right) =1
\end{equation*}
holds for every $\mu _{0}\in \mathfrak{M}_{0}$ and every $0<\sigma^{2}<\infty $. In particular, the size of the test is equal to one.
\item Suppose $g^*_{\kappa, M, p}(e_{+}, X, R) \neq 0$ and $T(e_{+}+\mu_{0}^{\ast })<C$ holds for some (and hence all) $\mu _{0}^{\ast }\in \mathfrak{M}_{0}$, or $g^*_{\kappa, M, p}(e_{-}, X, R) \neq 0$ and $T(e_{-}+\mu _{0}^{\ast })<C$ holds for some (and hence all) $\mu _{0}^{\ast}\in \mathfrak{M}_{0}$. Then 
\begin{equation*}
\inf_{\Sigma \in \mathfrak{C}}P_{\mu _{0},\sigma ^{2}\Sigma }\left(W\left(C\right) \right) =0
\end{equation*}
holds for every $\mu _{0}\in \mathfrak{M}_{0}$ and every $0<\sigma^{2}<\infty $, and hence 
\begin{equation*}
\inf_{\mu _{1}\in \mathfrak{M}_{1}}\inf_{\Sigma \in \mathfrak{C}}P_{\mu_{1},\sigma ^{2}\Sigma }\left( W\left( C\right) \right) =0
\end{equation*}
holds for every $0<\sigma ^{2}<\infty $. In particular, the test is biased. Furthermore, the nuisance-infimal rejection probability at every point $\mu_{1}\in \mathfrak{M}_{1}$ is zero, i.e., 
\begin{equation*}
\inf\limits_{0<\sigma ^{2}<\infty }\inf\limits_{\Sigma \in \mathfrak{C}}P_{\mu _{1},\sigma ^{2}\Sigma }(W\left( C\right) )=0.
\end{equation*}
In particular, the infimal power of the test is equal to zero.
\item Suppose $g^*_{\kappa, M, p}(e_{+}, X, R) \neq 0$, $T(e_{+}+\mu_{0}^{\ast }) = C$ and $\grad T(e_{+} + \mu_0^{\ast})$ exists for some (and hence all) $\mu _{0}^{\ast }\in \mathfrak{M}_{0}$, or $g^*_{\kappa, M, p}(e_{-}, X, R) \neq 0$, $T(e_{-}+\mu _{0}^{\ast })= C$ and $\grad T(e_{-} + \mu_0^{\ast})$ exists for some (and hence all) $\mu _{0}^{\ast}\in \mathfrak{M}_{0}$. Then 
\begin{equation*}
\sup\limits_{\Sigma \in \mathfrak{C}}P_{\mu _{0},\sigma ^{2}\Sigma }\left(W \left( C\right) \right) \geq 1/2
\end{equation*}
holds for every $\mu _{0}\in \mathfrak{M}_{0}$ and every $0<\sigma^{2}<\infty $. In particular, the size of the test is at least $1/2$.
\item Suppose that $g^*_{\kappa, M, p}(., X, R) \not \equiv 0$. Suppose further that $e_{+} \in \mathfrak{M}$ and $R\hat{\beta}(e_{+})\neq 0$ holds, or $e_{-} \in \mathfrak{M}$ and $R\hat{\beta}(e_{-})\neq 0$ holds. Then 
\begin{equation*}
\sup\limits_{\Sigma \in \mathfrak{C}}P_{\mu _{0},\sigma ^{2}\Sigma }\left(
W\left( C\right) \right) =1
\end{equation*}
holds for every $\mu _{0}\in \mathfrak{M}_{0}$ and every $0<\sigma^{2}<\infty $. In particular, the size of the test is equal to one.
\end{enumerate}
\end{theorem}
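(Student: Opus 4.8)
The plan is to reduce all four assertions to the concentration mechanism of \cite{PP13}, driven by the fact that Assumption \ref{AAR(1)} places $e_+e_+'$ and $e_-e_-'$ in the closure of $\mathfrak{C}$ (Remark \ref{singboundary}): along $\Lambda(\rho_m)$ with $\rho_m\to1$ (resp. $\rho_m\to-1$) the law $P_{\mu_0,\sigma^2\Lambda(\rho_m)}$ concentrates on the ray $\mu_0+\Rel e_+$ (resp. $\mu_0+\Rel e_-$), so the limiting rejection probability is governed entirely by the behaviour of $T$ on that ray. Before invoking this I would record two structural properties of $\hat{\Omega}_{\kappa,M,p}$, hence of $T$. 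First, $\hat{\Omega}_{\kappa,M,p}(y)$ depends on $y$ only through the OLS residual $\hat{u}(y)$ (it enters only via $\hat{V}(y)=X'\diag(\hat{u}(y))$), so it is invariant under translating $y$ by any element of $\mathfrak{M}=\lspan(X)$. Second, $\hat{\Omega}_{\kappa,M,p}$ is positively homogeneous of degree two, $\hat{\Omega}_{\kappa,M,p}(cy)=c^2\hat{\Omega}_{\kappa,M,p}(y)$ for $c\neq0$: the coefficients $\hat{A}^{(p)}$ and each candidate bandwidth $M\in\mathbb{M}_{AM}\cup\mathbb{M}_{NW}\cup\mathbb{M}_{KV}$ are scale invariant (being ratios of equal-degree homogeneous quantities, or constants), while $\hat{Z}$ is homogeneous of degree one and $\check{\Gamma}_i,\check{\Psi},\hat{\Psi}$ of degree two. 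Verifying the scale invariance of $M$ separately for the three bandwidth families is the one place requiring a short computation.

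From these two facts I would deduce the invariance claims implicit in the statement and the constancy of $T$ along the ray. Writing $\mathfrak{M}_{00}=\mathfrak{M}_0-\mu_0^*$ for the linear space underlying $\mathfrak{M}_0$, translation invariance of $\hat{\Omega}$ together with $R\hat{\beta}(y+v)-r=R\hat{\beta}(y)-r$ for $v\in\mathfrak{M}_{00}$ yields $T(y+v)=T(y)$; this is precisely the assertion that $T(e_\pm+\mu_0^*)$ (and, in part (3), its gradient) is the same for some and hence all $\mu_0^*\in\mathfrak{M}_0$. Combining translation invariance with degree-two homogeneity, for $e_\pm\notin\mathfrak{M}$ and $z\neq0$ one gets $\hat{\Omega}(\mu_0+ze_\pm)=z^2\hat{\Omega}(e_\pm)$ and $R\hat{\beta}(\mu_0+ze_\pm)-r=zR\hat{\beta}(e_\pm)$, whence $T(\mu_0+ze_\pm)=T(e_\pm+\mu_0^*)$ is constant in $z$. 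The hypothesis $g^*_{\kappa,M,p}(e_\pm,X,R)\neq0$ guarantees, via Part 5 of Lemma \ref{N*PW}, that $e_\pm\notin N^*(\hat{\Omega})$, so this constant is the genuine quadratic form and $T$ is continuous at every $\mu_0+ze_\pm$ with $z\neq0$.

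With the behaviour of $T$ on the ray pinned down, parts (1)--(3) follow by applying the concentration results of \cite{PP13}, whose hypotheses are met by the two structural properties above together with $g^*_{\kappa,M,p}(e_\pm,X,R)\neq0$. If $T(e_\pm+\mu_0^*)>C$, the ray lies (off the origin) in the open rejection region and the concentrating mass forces $P_{\mu_0,\sigma^2\Lambda(\rho_m)}(W(C))\to1$, giving size one (part 1). If $T(e_\pm+\mu_0^*)<C$, the ray lies in the complement and the probability tends to $0$; the extension to $\inf_{\mu_1}$ and to $\inf_{\sigma^2}$ on $\mathfrak{M}_1$ is obtained by additionally letting either $\sigma^2\to\infty$ or $\mu_1\to\mathfrak{M}_0$, which pushes the bounded set $\{z:T(\mu_1+ze_\pm)\geq C\}$ out of the bulk of the limiting $N(0,\sigma^2)$ law of the coordinate along $e_\pm$ (part 2; biasedness is then immediate). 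If $T(e_\pm+\mu_0^*)=C$ and $\grad T(e_\pm+\mu_0^*)$ exists, the whole ray sits on $\{T=C\}$ and, to first order, the event $\{T\geq C\}$ near the ray is the half-space cut out by $\grad T$ acting on the symmetric, asymptotically nondegenerate perpendicular fluctuation; symmetry assigns it probability $1/2$, so $\liminf P\geq1/2$ and the size is at least $1/2$ (part 3).

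Part (4) is structurally different, since now $e_\pm\in\mathfrak{M}$, so the entire ray lies in $\mathfrak{M}$ and the OLS residual vanishes there, making the estimator degenerate of higher order on approach to the ray. Here I would use homogeneity together with $R\hat{\beta}(e_\pm)\neq0$: for a generic point $\mu_0+ze_\pm+\varepsilon v$ with $v\notin\mathfrak{M}$, the numerator $R\hat{\beta}-r$ stays bounded away from $0$ (its limit is $zR\hat{\beta}(e_\pm)\neq0$ for $z\neq0$) while $\hat{\Omega}=O(\varepsilon^2)\to0$, so $T\to\infty$; thus a punctured neighbourhood of the ray lies in $W(C)$ and captures the concentrating mass, giving size one. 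The assumption $g^*_{\kappa,M,p}(\cdot,X,R)\not\equiv0$, generic by Lemma \ref{nullN*PW}, is needed only to ensure $N^*(\hat{\Omega})$ is a $\lambda_{\Rel^n}$-null set so that the test does not trivialize. The main obstacle is the rigorous control of these limiting rejection probabilities, in particular the $1/2$ in part (3) and the treatment of the singular point $z=0$ where the ray meets $\mathfrak{M}$ and $T$ blows up: that point carries fixed $N(0,\sigma^2)$-mass, so one must show that the set where the shrinking perpendicular fluctuation dominates $z$ has vanishing probability. The cleanest route is to verify that the four situations are exactly the hypotheses of the packaged size/power theorems of \cite{PP13} and to invoke them, rather than redo the delicate concentration analysis by hand.
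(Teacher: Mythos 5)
Your treatment of Parts 1, 2 and 4 is essentially the paper's own route: one first verifies that $\hat{\beta}$ and $\hat{\Omega}_{\kappa,M,p}$ satisfy the invariance, equivariance and definiteness assumptions of \cite{PP13} (this is exactly the content of Lemma \ref{A567PW}, including the scale-invariance of $M$ checked separately for $\mathbb{M}_{AM}$, $\mathbb{M}_{NW}$, $\mathbb{M}_{KV}$), and then invokes the packaged results there: Corollary 5.17 of \cite{PP13} for Parts 1--2 and Theorem 5.19 of \cite{PP13} for Part 4, where the constant $K_2$ equals one because $\hat{\Omega}_{\kappa,M,p}$ is $\lambda_{\Rel^n}$-almost everywhere positive definite. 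Your direct description of the Part 4 mechanism (numerator bounded away from zero while $\hat{\Omega}$ collapses at rate $\varepsilon^2$) is precisely what underlies that theorem, so up to here the proposal is sound and matches the paper.

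The genuine gap is Part 3. Your proposed fallback --- ``verify that the four situations are exactly the hypotheses of the packaged size/power theorems of \cite{PP13} and invoke them'' --- fails there: \cite{PP13} contains no packaged result for the boundary case $T(e_{\pm}+\mu_0^{\ast})=C$, precisely because in this case it matters not only \emph{that} the measures concentrate on the ray but \emph{how} they concentrate (cf.\ Remark \ref{remarkthmlrvPW}(v)); the paper proves Part 3 by hand, adapting an idea from \cite{PP14}. Moreover, your sketch of that hand argument omits the two steps that make the value $1/2$ come out. First, you never show that $\grad T(e_{\pm}+\mu_0^{\ast})$ does not vanish identically on $\lspan(e_{\pm})^{\bot}$; if it did, the first-order ``half-space'' picture collapses and the limit would be governed by the remainder term, about which nothing is known. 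The paper establishes non-vanishing by contradiction, using $G(\mathfrak{M}_0)$-invariance together with the degree-two equivariance of $\hat{\Omega}_{\kappa,M,p}$ to show that $T$ equals $(1+\alpha/\norm{\Pi_{\mathfrak{M}^{\bot}}e_{\pm}})^{-2}C$ along the direction $\Pi_{\mathfrak{M}^{\bot}}e_{\pm}$, so the radial directional derivative is $-2C/\norm{\Pi_{\mathfrak{M}^{\bot}}e_{\pm}}\neq 0$ (here $C>0$ is essential). Second, ``symmetry assigns probability $1/2$'' is not sufficient: after the expansion, the limiting event is that the two mean-zero Gaussian variables $e_{\pm}'\mathbf{G}$ (the ray coordinate) and $\grad T(e_{\pm}+\mu_0^{\ast})D^{\ast}\mathbf{G}$ (the rescaled perpendicular fluctuation) have the \emph{same sign}, and for correlated mean-zero Gaussians that probability is $1/2+\arcsin(\rho)/\pi$, not $1/2$. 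The paper must therefore prove $D^{\ast}e_{\pm}=0$ --- a specific property of how the AR(1) matrices $\Lambda(\rho_m)$ degenerate, extracted from Lemma G.1 of \cite{PP13} --- to obtain independence of the two variables and hence the exact limit $1/2$ (along a subsequence, via Portmanteau). Without these two ingredients your Part 3 argument does not go through.
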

\begin{remark}\label{remarkthmlrvPW}
(i) Lemma \ref{A567PW} in Appendix \ref{Aneg} shows that the rejection probabilities $P_{\mu, \sigma^2\Sigma}(W(C))$ depend on $(\mu, \sigma^2, \Sigma)$ only through $(\langle (R\beta - r)/\sigma \rangle, \Sigma)$, where $\beta$ is uniquely determined by $X\beta = \mu$. \\
(ii) Obviously, the conclusions of the preceding theorem also apply to any rejection region $W^* \in \mathcal{B}(\Rel^n)$ which differs from $W(C)$ only by a $\lambda_{\Rel^n}$-null set. \\
(iii) In Part 1 of the theorem the condition $g^*_{\kappa, M, p}(e_{+}, X, R) \neq 0$ ($g^*_{\kappa, M, p}(e_{-}, X, R) \neq 0$) is superfluous, because it is already implicit in $T(e_{+}+\mu_{0}^{\ast })> C> 0$ ($T(e_{-}+\mu _{0}^{\ast })>C > 0$), which is readily seen from the definition of $T$ in Equation \eqref{tslrv}. A similar comment applies to Part 3 of the theorem, where the condition $g^*_{\kappa, M, p}(e_{+}, X, R) \neq 0$ ($g^*_{\kappa, M, p}(e_{-}, X, R) \neq 0$) is already implicit in $T(e_{+}+\mu_{0}^{\ast }) = C> 0$ ($T(e_{-}+\mu _{0}^{\ast })=C>0$). The conditions are included for the sake of comparability with Part 2 of the theorem. \\
(iv) In case $M \in \mathbb{M}_{KV}$, the assumption concerning the existence of the gradient can be dropped in Part 3 of the theorem. This follows from Lemma \ref{gradientT} in Appendix \ref{Aneg}, where it is shown that if $M \in \mathbb{M}_{KV}$, then the existence of $\grad T(e_+ + \mu_0^*)$ and $\grad T(e_- + \mu_0^*)$ is already implied by $g^*_{\kappa, M, p}(e_{+}, X, R) \neq 0$ and $g^*_{\kappa, M, p}(e_{-}, X, R) \neq 0$, respectively. \\
(v) Throughout the theorem, Assumption \ref{AAR(1)} can be replaced by the weaker assumption that there exist two sequences $\Lambda(\rho_m^{(1)})$ and $\Lambda(\rho_m^{(2)})$ of AR(1) correlation matrices in $\mathfrak{C}$, such that $\rho_m^{(1)} \rightarrow -1$ and $\rho_m^{(2)} \rightarrow 1$. In Parts 1 and 2 of the theorem it is even enough to assume that there exist sequences $\Sigma_m^{(i)} \in \mathfrak{C}$ for $i = 1, 2$ with $\Sigma_m^{(1)} \rightarrow e_+e_+'$ and $\Sigma_m^{(2)} \rightarrow e_-e_-'$. Therefore, in these parts it is only important that - and not how - these singular matrices can be approximated from within $\mathfrak{C}$. 
\end{remark}
We shall now provide some intuition for Theorem \ref{thmlrvPW} (cf. also the discussion preceding Theorem 5.7 in \cite{PP13}). The repeated appearance of the vectors $e_+$ and $e_-$ in the theorem stems from the fact that both $e_+e_+'$ and $e_-e_-'$ are elements of the singular boundary of $\mathfrak{C} \supseteq \mathfrak{C}_{AR(1)}$ (cf. Remark \ref{singboundary}). Furthermore, for every $\mu_0^* \in \mathfrak{M}_0$ and every $0 < \sigma^2 < \infty$ we have that $P_{\mu_0^*, \sigma^2 \Sigma} \rightarrow P_{\mu_0^*, \sigma^2 e_+ e_+'}$ weakly as $\Sigma \rightarrow e_+e_+'$ with $\Sigma \in \mathfrak{C}$, and similarly that $P_{\mu_0^*, \sigma^2 \Sigma} \rightarrow P_{\mu_0^*, \sigma^2 e_- e_-'}$ weakly as $\Sigma \rightarrow e_-e_-'$ with $\Sigma \in \mathfrak{C}$. These limiting measures are absolutely continuous w.r.t. $\lambda_{\mu_0^* +  \lspan(e_+)}$ and $\lambda_{\mu_0^* +  \lspan(e_-)}$, respectively. As a consequence we see that the mass of $P_{\mu_0^*, \sigma^2 \Sigma} \in \mathfrak{P}$ concentrates on `neighborhoods' of certain one-dimensional affine spaces as $\Sigma$ approximates $e_+e_+'$ or $e_-e_-'$ from within $\mathfrak{C}$. From that it is intuitively clear that size and power properties crucially depend on the behavior of the tests on `neighborhoods' of these spaces. The first and second part of the theorem provide sufficient conditions under which these spaces are almost surely (w.r.t. $\lambda_{\mu_0^* +  \lspan(e_+)}$ and $\lambda_{\mu_0^* +  \lspan(e_-)}$) contained in the interior or exterior of the rejection region, respectively. The former case then leads to size distortions, the latter to power deficiencies. The situation in the third part of the theorem is quite different and more complex. In this case the one-dimensional affine space supporting the respective limiting measure is neither almost surely contained in the interior, nor almost surely contained in the exterior of the rejection region. Rather it is almost surely contained in the boundary of the rejection region. Therefore, in contrast to Parts 1 and 2, it is not only important that the measures concentrate on the respective one-dimensional space, but also \textit{how} they concentrate (cf. Remark \ref{remarkthmlrvPW} (v)). The concentration turns out to be such that eventually the measures put roughly equal weight onto the rejection region and onto its complement, resulting in rejection probabilities as large as $1/2$ under the null. We point out that the proof idea used to establish Part 3 is inspired by the proof of Theorem 2.20 in \cite{PP14}. The last part of the theorem considers the case where one of the vectors $e_+$ or $e_-$ is an element of $\mathfrak{M}$ that is also `involved' in the hypothesis. It is then shown that the size of the test is one if the global condition $g^*_{\kappa, M, p}(., X, R) \not \equiv 0$ is satisfied. We recall that if this condition fails to hold, then the test $T$ based on $\hat{\Omega}_{\kappa, M, p}$ breaks down in a trivial way, because $T$ is then zero everywhere. Therefore we see that under Assumption \ref{AAR(1)} one simply can not test a hypothesis involving $e_+ \in \mathfrak{M}$ or $e_-\in \mathfrak{M}$ by means of a test $T$ based on $\hat{\Omega}_{\kappa, M, p}$ with $\kappa$, $M$, $p$ satisfying Assumption \ref{weightsPWRB} (this in particular covers the location model where $X = e_+$, cf. also Lemma \ref{nullN*PW}, Part 2).

\begin{remark}\label{boundaway}
Suppose that it is \textit{known a priori} that for some (fixed) $\varepsilon \in (0, 1]$ the covariance model $\mathfrak{C}$ does not contain AR(1) correlation matrices $\Lambda(\rho)$ with $\rho \leq -1+\varepsilon$; i.e., instead of Assumption \ref{AAR(1)} the covariance model $\mathfrak{C}$ satisfies
\begin{equation}\label{alternativecov}
\mathfrak{C}_{AR(1)}(\varepsilon) = \geschw{\Lambda(\rho): \rho \in (-1+\varepsilon, 1)} \subseteq \mathfrak{C}. 
\end{equation}
Inspection of the proof of Theorem \ref{thmlrvPW} then shows that a version of Theorem \ref{thmlrvPW} holds, in which all references to $e_-$ are deleted in Parts 1-4. For example, Part 4 of this version of Theorem \ref{thmlrvPW} reads as follows: 
\begin{quote}
``Suppose that $g^*_{\kappa, M, p}(., X, R) \not \equiv 0$. Suppose further that $e_{+} \in \mathfrak{M}$ and $R\hat{\beta}(e_{+})\neq 0$ holds. Then 
\begin{equation*}
\sup\limits_{\Sigma \in \mathfrak{C}}P_{\mu _{0},\sigma ^{2}\Sigma }\left(
W\left( C\right) \right) =1
\end{equation*}
holds for every $\mu _{0}\in \mathfrak{M}_{0}$ and every $0<\sigma^{2}<\infty $. In particular, the size of the test is equal to one.''
\end{quote}
This statement covers (in particular) the important special case of testing a restriction on the mean in a location model. We make the following observations concerning this version of Theorem \ref{thmlrvPW}:
\begin{itemize}
	\item Since $e_-e_-'$ is not necessarily an element of the singular boundary of the covariance model considered here, the result just described does not contain ``size equal to one''- or ``nuisance-minimal-power equal to zero''-statements that arise from covariance matrices approaching $e_-e_-'$. Note, however, that the original Theorem \ref{thmlrvPW} implies by a continuity argument that if $\varepsilon$ is small (compared to sample size), then considerable size distortions or power deficiencies will nevertheless be present for covariance matrices in $\mathfrak{C}$ that are close to $e_-e_-'$. 
  \item Consider the case where $e_+ \in \mathfrak{M}$, i.e., the regression contains an intercept, and where the hypothesis does not involve the intercept, i.e., $R\hat{\beta}(e_+) = 0$: Then we see that Parts 1-4 of the version of Theorem \ref{thmlrvPW} just obtained do not apply. In fact, in this case we can establish a positive result concerning a test based on $T$ with $\hat{\Omega} = \hat{\Omega}_{\kappa, M, p}$, and based on a non-standard critical value that depends on $\varepsilon$. This positive result, together with its restrictions, is discussed in Remark \ref{Rposbound}.
\end{itemize}
\end{remark}

Given a hypothesis $(R,r)$ the four sufficient conditions provided in the preceding theorem are conditions on the design matrix $X$. They depend on observable quantities only. How these conditions can be checked is discussed in the subsequent paragraph: The first three parts of the theorem operate under the local assumption that the multivariate polynomial $g^*_{\kappa, M, p}(., X, R)$ does not vanish at the point $e_+$ or $e_-$, respectively. The multivariate polynomial $g^*_{\kappa, M, p}(., X, R)$ is explicitly constructed in the proof of Lemma \ref{N*PW}. Therefore, the condition that it does not vanish at specific data points can readily be checked.  Some additional conditions needed in Parts 1-3 of the theorem are formulated in terms of $T(e_+ + \mu_0^*)$ and $T(e_- + \mu_0^*)$, which are in fact independent of the specific $\mu_0^* \in \mathfrak{M}_0$ chosen and therefore easy to calculate. Part 3 of the theorem requires the existence of $\grad T(e_+ + \mu_0^*)$ or $\grad T(e_- + \mu_0^*)$ (which is immaterial if $M \in \mathbb{M}_{KV}$ as discussed in the preceding Remark). Again the existence of the gradients is independent of the specific choice of $\mu_0^* \in \mathfrak{M}_0$. Sufficient conditions for the existence of the gradient, under the assumption that $\kappa$ is continuously differentiable on the complement of a finite number of points, are provided in Lemma \ref{gradientT} in Appendix \ref{Aneg}. These conditions amount to checking whether or not $M(e_+)$ or $M(e_-)$, respectively, is an element of a certain set determined by $\kappa$ consisting of finitely many points. In contrast to Parts 1-3, the fourth part of the theorem operates under the global assumption that the multivariate polynomial $g^*_{\kappa, M, p}(., X, R)$ is not the zero polynomial. Since the polynomial $g^*_{\kappa, M, p}(., X, R)$ is explicitly constructed in the proof of Lemma \ref{N*PW}, the global assumption $g^*_{\kappa, M, p}(., X, R) \not \equiv 0$ can either be checked analytically, or by using standard algorithms for polynomial identity testing. In addition to this global assumption, the fourth part needs additional assumptions on the structure of $\mathfrak{M}$ and the hypothesis $(R,r)$ which can of course be easily checked by the user.

\smallskip

The preceding theorem has given sufficient conditions on the design matrix, under which the test considered breaks down in terms of its size and/or power behavior. However, for a given hypothesis $(R,r)$ there exist elements of $\mathfrak{X}_0 \subseteq \Rel^{n \times k}$ to which the theorem is not applicable. As a consequence, the question remains to `how many' elements of $\mathfrak{X}_0$ the theorem can be applied once $(R,r)$ has been fixed. This question is studied subsequently. It is shown that generically in the space of all design matrices at least one of the four conditions of Theorem \ref{thmlrvPW} applies. The first part of the proposition establishes this genericity result in the class of all design matrices of full column rank, i.e., $\mathfrak{X}_0$. The remaining parts establish the genericity result in case $k \geq 2$ and the first column of $X$ is the intercept, i.e., $X = (e_+, \tilde{X})$ with $\tilde{X} \in \tilde{\mathfrak{X}}_0$. Before we state the proposition, we introduce two assumptions on the kernel $\kappa$. The first assumption is satisfied by all kernels typically used in practice.
\begin{assumption} \label{CD}
The kernel $\kappa$ is continuously differentiable on the complement of $\Delta(\kappa) \subseteq \Rel$, a set consisting of finitely many elements.
\end{assumption}
The second assumption, which is used in some statements of the second part of the genericity result, imposes compactness of the support of the kernel. This is satisfied by many kernels used in practice, e.g., the Bartlett kernel or the Parzen kernel, but is not satisfied by the Quadratic-Spectral kernel.
\begin{assumption} \label{CDL}
The support of $\kappa$ is compact.
\end{assumption}
The genericity result is now as follows, where several quantities are equipped with the additional subindex $X$ to stress their dependence on the design matrix.
\begin{proposition}\label{generic}
Fix a hypothesis $(R,r)$ such that $\rank(R) = q$. Let $\kappa$, $M$, $p$ satisfy Assumption \ref{weightsPWRB}. For $X \in \mathfrak{X}_0$ let $T_X$ be the test statistic defined in \eqref{tslrv} with $\hat{\Omega} = \hat{\Omega}_{\kappa, M, p, X}$ and let $\mu _{0, X}^{\ast } \in \mathfrak{M}_{0, X} = \geschw{\mu \in \lspan(X): \mu = X\beta, R\beta = r}$ be arbitrary (the sets defined below do not depend on the  choice of $\mu _{0, X}^{\ast }$). Fix a critical value $C$ such that $0 < C < \infty$. Then, the following holds.
\begin{enumerate}
\item Suppose that $k(p+1) + p + \mathbf{1}_{\mathbb{M}_{AM}}(M) \leq n$, define 
\begin{align}
\mathfrak{X}_{1}\left( e_{+}\right) &= \geschw{X\in \mathfrak{X}_0: g_{\kappa, M, p}^*(e_{+}, X, R) = 0} \\ 
\mathfrak{X}_{2}\left( e_{+} \right) &= \geschw{ X\in \mathfrak{X}_{0}\backslash 
\mathfrak{X}_{1}\left( e_{+}\right) :  \nexists ~ (\grad T_X(.))|_{e_{+}+\mu _{0, X}^{\ast }} \text{ and } T_X(e_{+}+\mu _{0, X}^{\ast }) = C},
\end{align}
and similarly define $\mathfrak{X}_{1}\left( e_{-}\right) $ and $\mathfrak{X}_{2}\left( e_{-}\right)$. Then, $\mathfrak{X}_{1}\left( e_{+}\right)$ and  $\mathfrak{X}_{1}\left( e_{-}\right) $ are $\lambda _{\mathbb{R}^{n\times k}}$-null sets. If $M \in \mathbb{M}_{KV}$ or if $\kappa$ satisfies Assumption \ref{CD}, then $\mathfrak{X}_{2}\left( e_{+}\right)$ and $\mathfrak{X}_{2}\left( e_{-}\right)$ are $\lambda_{\mathbb{R}^{n\times k}}$-null sets. If Assumption \ref{AAR(1)} holds, then the set of all design matrices $X\in \mathfrak{X}_{0}$ for which the first three parts of Theorem \ref{thmlrvPW} do not apply is a subset of $\left( \mathfrak{X}_{1}\left( e_{+}\right) \cup \mathfrak{X}_{2}\left( e_{+}\right) \right)\cap \left( \mathfrak{X}_{1}\left( e_{-}\right) \cup \mathfrak{X}_{2}\left(e_{-}\right) \right) $ and hence is a $\lambda _{\mathbb{R}^{n\times k}}$-null set if $M \in \mathbb{M}_{KV}$ or if $\kappa$ satisfies Assumption \ref{CD}; it thus is a `negligible' subset of $\mathfrak{X}_{0}$ in view of the fact that $\mathfrak{X}_{0}$ differs from $\mathbb{R}^{n\times k}$ only by a $\lambda _{\mathbb{R}^{n\times k}}$-null set. 
\item Let $k \geq 2$ and assume further that $k(p+1) + p^* + \mathbf{1}_{\mathbb{M}_{AM}}(M) \leq n$, where $p^* = p + (p\bmod2)$. Define
\begin{align}
\mathfrak{\tilde{X}}_{1}\left( e_{-}\right) &=\left\{ \tilde{X}\in \mathfrak{\tilde{X}}_{0}: g_{\kappa, M, p}^*(e_{-}, (e_{+}, \tilde{X}), R) = 0 \right\} , \\
\mathfrak{\tilde{X}}_{2}\left( e_{-}\right) &= \left\{ 
\begin{matrix}
\tilde{X} \in 
\mathfrak{\tilde{X}}_{0}\backslash \mathfrak{\tilde{X}}_{1}\left(
e_{-}\right) : & \nexists ~ (\grad T_{(e_{+}, \tilde{X})}(.))|_{e_{-}+\mu _{0, (e_+, \tilde{X})}^{\ast }} \\
& \text{ and } T_{(e_{+}, \tilde{X})}(e_{-}+\mu _{0, (e_+, \tilde{X})}^{\ast }) = C 
\end{matrix}
\right\}.
\end{align}
Then, $\mathfrak{\tilde{X}}_{1}\left(e_{-}\right) $ is a $\lambda _{\mathbb{R}^{n\times \left( k-1\right) }}$-null set. Furthermore, $\mathfrak{\tilde{X}}_{2}\left( e_{-}\right)$ is a $\lambda _{\mathbb{R}^{n\times \left( k-1\right) }}$-null set under each of the following conditions:
\begin{enumerate}
	\item $M \in \mathbb{M}_{KV}$.
	\item $M \in \mathbb{M}_{AM}$ and $\kappa$ satisfies Assumptions \ref{CD} and \ref{CDL}.	
	\item $M \in \mathbb{M}_{NW}$, $p$ is odd, $\omega_i > 0$ for some $i > 1$ and $\kappa$ satisfies Assumptions \ref{CD} and \ref{CDL}.
	\item $\kappa$ satisfies Assumption \ref{CD} and $\tilde{X} \mapsto T_{(e_+, \tilde{X})}(e_- + \mu _{0, (e_+, \tilde{X})}^{\ast }) \not \equiv C$ on $\mathfrak{\tilde{X}}_{0}\backslash \mathfrak{\tilde{X}}_{1}\left(e_{-}\right)$.
	%This is satisfied if, e.g., there exists an index $1 < i^* \leq k$ such that $R_{\cdot i^*} = 0$. In the latter case $\mathfrak{\tilde{X}}_{2}\left( e_{-}\right) = \emptyset$ holds.
\end{enumerate}
Suppose that the first column of $R$ consists of zeros and that Assumption \ref{AAR(1)} holds. Then, the set of all matrices $\tilde{X} \in \mathfrak{\tilde{X}}_{0}$ such that the first three parts of Theorem \ref{thmlrvPW} do not apply to the design matrix $X=( e_{+},\tilde{X}) $ is a subset of $\mathfrak{\tilde{X}}_{1}\left( e_{-}\right) \cup \mathfrak{\tilde{X}}_{2}\left( e_{-}\right)$ and hence is a $\lambda_{\mathbb{R}^{n\times\left( k-1\right) }}$-null set if one of the conditions in (a)-(d) holds; it thus is a `negligible' subset of $\mathfrak{\tilde{X}}_{0}$ in view of the fact that $\mathfrak{\tilde{X}}_{0}$ differs from $\mathbb{R}^{n\times \left( k-1\right) }$ only by a $\lambda_{\mathbb{R}^{n\times \left( k-1\right) }}$-null set. 
\item Suppose $k\geq 2$, that the first column of $R$ is nonzero and that Assumption \ref{AAR(1)} holds. Then Theorem \ref{thmlrvPW} (Part 4) applies to the design matrix $X=\left( e_{+},\tilde{X}\right) $ for every $\tilde{X}\in 
\mathfrak{\tilde{X}}_{0}$ satisfying $g_{\kappa, M, p}^*(., X, R) \not \equiv 0$. 
\end{enumerate}
\end{proposition}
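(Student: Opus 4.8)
The plan is to treat the three parts separately, reducing every null-set assertion to the Okamoto-type dichotomy for algebraic sets quoted after Lemma \ref{N*PW}, and reducing Part 3 to a direct verification of the hypotheses of Theorem \ref{thmlrvPW}.

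I would begin with the $\mathfrak{X}_1$-type sets. Since $g^*_{\kappa, M, p}$ is a multivariate polynomial in $(y,X,R)$ by Lemma \ref{N*PW}, fixing $y=e_+$ and the hypothesis matrix $R$ yields a polynomial $X \mapsto g^*_{\kappa, M, p}(e_+, X, R)$ on $\Rel^{n\times k}$; hence $\mathfrak{X}_1(e_+)$ is the trace on $\mathfrak{X}_0$ of an algebraic set, and likewise for $\mathfrak{X}_1(e_-)$ and for $\tilde{\mathfrak{X}}_1(e_-)$ as a polynomial in $\tilde X$ on $\Rel^{n\times(k-1)}$. By the dichotomy, each is either a $\lambda$-null set or the whole space, the latter occurring exactly when the design-polynomial vanishes identically. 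To exclude the whole-space case for $\mathfrak{X}_1(e_+)$ it suffices, by Parts 4 and 5 of Lemma \ref{N*PW}, to exhibit a single $X_0 \in \mathfrak{X}_0$ at which $\hat\Omega_{\kappa, M, p, X_0}$ is well defined at $y=e_+$ (i.e. $g_{\kappa, M, p}(e_+, X_0) \neq 0$, which needs $\rank(\hat V_1(e_+)) = kp$ and $I_k - \sum_{l=1}^p \hat A^{(p)}_l(e_+)$ invertible) and satisfies $\rank(\hat Z(e_+)) = k$: Part 4 then gives positive definiteness, so $e_+ \notin N^*(\hat\Omega_{\kappa, M, p, X_0})$ and thus $g^*_{\kappa, M, p}(e_+, X_0, R) \neq 0$ by Part 5.

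The construction of this $X_0$ is the crux, and is where the dimension bounds enter. The inequality $k(p+1)+p+\mathbf{1}_{\mathbb{M}_{AM}}(M) \le n$ guarantees that the regressor block $\hat V_1(e_+) \in \Rel^{kp\times(n-p)}$ has enough columns to attain rank $kp$, that the residual block $\hat Z(e_+) \in \Rel^{k\times(n-p)}$ retains the $k$ residual directions needed for full row rank (this is exactly the surplus $n-p-kp \ge k$), and that the $+\mathbf{1}_{\mathbb{M}_{AM}}(M)$ term keeps the denominators defining the AM bandwidth from degenerating. I would build $X_0$ explicitly so that $\hat u_{X_0}(e_+) \neq 0$ and the columns of $\hat V_{X_0}(e_+) = X_0'\diag(\hat u_{X_0}(e_+))$ realize a non-degenerate VAR configuration, reusing the constructive device from the proof of Lemma \ref{nullN*PW} but now pinning the data point at $e_\pm$ rather than at a free $y$. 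For $\tilde{\mathfrak{X}}_1(e_-)$ one takes $X_0=(e_+,\tilde X_0)$; the new feature is that the alternating sign pattern of $e_- = (1,-1,\dots,(-1)^n)'$ interacts with the lag structure of the VAR, so a non-degenerate fit is only guaranteed after replacing $p$ by $p^* = p + (p \bmod 2)$ in the count. I expect verifying the rank conditions at this sign-alternating vector to be the main technical obstacle of the whole proposition.

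For the $\mathfrak{X}_2$-type sets — where $T=C$ while $\grad T$ fails to exist — I would split on the bandwidth. If $M \in \mathbb{M}_{KV}$, Lemma \ref{gradientT} (cf. Remark \ref{remarkthmlrvPW}(iv)) shows the gradient exists off $\mathfrak{X}_1$, so the set is empty. If $\kappa$ satisfies Assumption \ref{CD}, then off $\mathfrak{X}_1$ both $X \mapsto T_X(e_\pm+\mu_0^*)$ and $X \mapsto M_X(e_\pm)$ are real-analytic, and non-existence of the gradient forces $M_X(e_\pm)$ into the finite set where $\kappa$ is non-differentiable; I would then argue that either this bandwidth level set is a proper analytic level set (hence null) or, where that fails, the remaining condition $T_X(e_\pm+\mu_0^*)=C$ defines a proper analytic level set (hence null). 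The four cases (a)--(d) of Part 2 correspond precisely to the mechanisms — compact support of $\kappa$, oddness of $p$ with $\omega_i>0$ for some $i>1$, or the explicit non-constancy hypothesis — that certify one of these two analytic functions is non-constant on the relevant piece; establishing non-constancy of the bandwidth at $e_-$ in cases (b) and (c) is the delicate point and again hinges on the sign structure of $e_-$. Finally, the ``subset'' assertions follow because off $\mathfrak{X}_1(e_\pm)$ the three mutually exclusive possibilities $T(e_\pm+\mu_0^*) > C$, $< C$, or $=C$ with gradient trigger Part 1, 2, or 3 of Theorem \ref{thmlrvPW} through the corresponding branch, so Parts 1--3 fail only on $\mathfrak{X}_1(e_\pm)\cup\mathfrak{X}_2(e_\pm)$, giving the stated intersection in Part 1 and union in Part 2. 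Part 3 is then immediate: for $X=(e_+,\tilde X)$ one has $e_+ \in \mathfrak{M}$ and, solving $X\beta=e_+$ exactly, $\hat\beta(e_+)=e_1(k)$, so $R\hat\beta(e_+)$ equals the first column of $R$, which is nonzero by hypothesis; together with $g^*_{\kappa, M, p}(\cdot,X,R)\not\equiv 0$ this is exactly the hypothesis of Part 4 of Theorem \ref{thmlrvPW}, which therefore applies.
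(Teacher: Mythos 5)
Your treatment of the $\mathfrak{X}_1$-type sets and of Part 3 follows the same route as the paper: $X \mapsto g^*_{\kappa,M,p}(e_\pm, X, R)$ is a polynomial by Lemma \ref{N*PW} Part 5, the algebraic-set dichotomy reduces everything to exhibiting one witness design, and the witness is built by placing nonzero rows at indices spaced $p+1$ apart so that $e_\pm \perp \lspan(X)$ and the conditions of Lemma \ref{AUXCONSTR} hold; Part 3 is the computation $R\hat\beta_X(e_+) = $ first column of $R$ followed by Part 4 of Theorem \ref{thmlrvPW}. That part of your plan is sound, though the witness constructions (especially the intercept-constrained one at $e_-$, which in the paper requires the auxiliary vectors $v$, $z$ and matrix $L$) are only sketched and carry most of the actual work.

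The genuine gap is in your argument for the $\mathfrak{X}_2$-type sets. You assert that under Assumption \ref{CD} the map $X \mapsto T_X(e_\pm + \mu_{0,X}^{\ast})$ is real-analytic off $\mathfrak{X}_1$, and you conclude nullity from ``proper analytic level sets.'' But Assumption \ref{CD} only makes $\kappa$ continuously differentiable off a finite set; $T_X$ involves the kernel values $\kappa(i/M_X(e_\pm))$, so as a function of $X$ it is at best $C^1$, and the level set $\{T_X = C\}$ of a non-constant $C^1$ function can have positive Lebesgue measure. The paper's proof avoids analyticity of $\kappa$ entirely: gradient failure forces $M_X(e_\pm)$ into the finite set $\bar{\Delta}$; by Lemma \ref{gradientT} Part 3 each bandwidth level set $\{M_X(e_\pm)=\delta_i\}$ is polynomial in $X$; either all these are null (done), or one of them is all of $\mathfrak{X}_0 \setminus \mathfrak{X}_1$, in which case $M_X(e_\pm)$ is \emph{constant} there, the weight matrix $\mathcal{W}_{n-p}(e_\pm)$ freezes to a constant matrix, and only then does $T_X(e_\pm+\mu_{0,X}^{\ast})=C$ become, after clearing denominators, a polynomial equation in $X$. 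Moreover, Part 1 has no analogue of hypothesis (d), so one must still certify that this polynomial is not identically zero: the paper reuses the orthogonal witness design from the $\mathfrak{X}_1$ step, for which $\hat\beta_X(e_+)=0$ and hence $T_X(e_++\mu_{0,X}^{\ast})=0<C$; your proposal supplies no such witness. Relatedly, your reading of cases (b) and (c) of Part 2 as ``certifying non-constancy of the bandwidth at $e_-$'' is not the operative mechanism and would not suffice: there the paper shows that if the bandwidth \emph{is} constant on $\tilde{\mathfrak{X}}_0\setminus\tilde{\mathfrak{X}}_1(e_-)$ then the constant must equal $0$ (via the explicit witness in case (b), via a sequence $X_m$ with $M(e_-)\to 0$ in case (c)), whereupon Assumption \ref{CDL} and Lemma \ref{gradientT} Part 2b give existence of the gradient everywhere on that set, so $\tilde{\mathfrak{X}}_2(e_-)$ is in fact empty.
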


\begin{remark}\label{Rgeneric}
(i) If $n < k(p+1) + p$ and $q = k$ holds, the first part of Lemma \ref{nullN*PW} shows that the test trivially breaks down, since for every element $X$ of $\mathfrak{X}_0$ the test statistic $T_X$ is then constant on $\Rel^n$. Therefore, the assumption on $n$ in the first two parts of the proposition can in general not be substantially improved. 

(ii) In the second part of the proposition, the analogously defined sets $\mathfrak{\tilde{X}}_{1}\left( e_{+}\right) $ and $\mathfrak{\tilde{X}}_{2}\left( e_{+}\right) $ clearly satisfy $\mathfrak{\tilde{X}}_{1}\left( e_{+}\right) =\mathfrak{\tilde{X}}_{0}$ and $\mathfrak{\tilde{X}}_{2}\left( e_{+}\right) =\emptyset $. 

(iii) In the third part of the proposition, if $X = (e_+, \tilde{X})$ does not satisfy $g_{\kappa, M, p}^*(., X, R) \not \equiv 0$, then the test breaks down in a trivial way, since $T_X$ is then constant.
\end{remark}

The first part of the preceding genericity result shows that if $M \in \mathbb{M}_{KV}$, or if the kernel satisfies Assumption \ref{CD},  then Theorem \ref{thmlrvPW} can be applied to generic elements of $\mathfrak{X}_0$, i.e., to all elements besides a $\lambda_{\Rel^{n \times k}}$-null set. Since all kernels used in practice, in particular the kernels emphasized in \cite{A92} and \cite{NW94}, i.e., the Quadratic-Spectral kernel and the Bartlett kernel, respectively, satisfy Assumption \ref{CD}, this additional restriction on $\kappa$ is practically immaterial. The second part of the proposition considers the situation where the first column of the design matrix is the intercept, which in addition is assumed not to be involved in the hypothesis in the sense that the first column of $R$ is zero. In this situation it is shown that Theorem \ref{thmlrvPW} can generically be applied to design matrices of the form $(e_+, \tilde{X})$ with $\tilde{X} \in \tilde{\mathfrak{X}}_0$, under certain sets of conditions on the triple $\kappa$, $M$, $p$. We first discuss Conditions (a)-(c):
\begin{enumerate}[label=(\alph*)]
\item In case $M \in \mathbb{M}_{KV}$ no additional condition is needed for establishing generic applicability of Theorem \ref{thmlrvPW}. 
\item If $M \in \mathbb{M}_{AM}$, generic applicability of the negative result follows if the kernel satisfies Assumptions \ref{CD} and \ref{CDL}, which applies to many kernels used in practice, but not to the Quadratic-Spectral kernel which is emphasized in \cite{A92}. 
\item In case $M \in \mathbb{M}_{NW}$, the result shows that the procedure breaks down generically if $p$ is odd, $\omega_i > 0$ for some $i > 1$ and $\kappa$ satisfies Assumptions \ref{CD} and \ref{CDL}. This seems to be restrictive. However, the recommended procedure in \cite{NW94} is obtained by choosing $\kappa$ the Bartlett kernel, $p = 1$ and $\omega = (0, 1, \hdots, 1)'$, because in Part 2 the first column of $X$ is the intercept. Therefore, we see that the recommended procedure in \cite{NW94} satisfies this condition. 
\end{enumerate}
Summarizing, we see that the Conditions (a)-(c) in the proposition cover the recommended choices of $\kappa$, $M$ and $p$ in \cite{NW94} and \cite{Rho2013}. For all procedures that are not covered by Conditions (a)-(c), e.g., the procedure in \cite{A92} based on the Quadratic-Spectral kernel, one can typically obtain the genericity result by applying Condition (d), which (under Assumption \ref{CD}) is always satisfied apart from at most one exceptional critical value $C^*$. This is seen as follows: Clearly, Condition (d) depends on the critical value $C$. We see that if Assumption \ref{CD} is satisfied, then (d) can be violated for at most a single $0 < C^* < \infty$. If this $C^*$ happens to coincide with $C$, the condition is not satisfied and we can not draw the desired conclusion for this specific value of $C$. Moreover, we immediately see that the condition must then be satisfied for any other choice $C'$, say. Therefore, generic applicability of the negative result follows for any value $C' \neq C$ in that case. This shows that even if one chooses a triple $\kappa$, $M$, $p$ that does not allow for an application of (a)-(c), one can not expect to obtain a procedure that has good finite sample size and power properties, because for all but at most one exceptional critical value the corresponding test is guaranteed to break down generically. The third part of the proposition considers the case where the first column of the design matrix is the intercept, and where the coefficient corresponding to the intercept is restricted by the hypothesis. In this case it follows that one can either apply Part 4 of Theorem \ref{thmlrvPW}, or the test statistic is constant and hence the test breaks down in a trivial way (cf. Remark \ref{Rgeneric}).

\section{A positive result, an adjustment procedure and its generic applicability}\label{pos}

In the previous section we have established a (generically applicable) negative result concerning tests as in \eqref{tslrv} based on a prewhitened covariance estimator $\hat{\Omega}_{\kappa, M, p}$. In the present section we first present a positive result concerning these tests under a \textit{non-generic} condition on the design matrix. Then we introduce an adjustment procedure and establish a condition on the design matrix under which the adjustment procedure leads to improved tests. Finally we prove that this condition holds generically in the set of all design matrices. Both, the positive result concerning tests as in \eqref{tslrv} based on a prewhitened covariance estimator $\hat{\Omega}_{\kappa, M, p}$, and the results concerning the adjustment procedure are established under the following assumption on the covariance model $\mathfrak{C}$. 
\begin{assumption} \label{approxAAR(1)}
The set $\mathfrak{C} \subseteq \Rel^{n \times n}$ is norm-bounded and satisfies $\mathfrak{C}_{AR(1)} \subseteq \mathfrak{C}$. Furthermore, for every sequence $\Sigma_m \in \mathfrak{C}$ that converges to $\bar{\Sigma} \in \boundary(\mathfrak{C})$ satisfying $\rank(\bar{\Sigma}) < n$ there exists a corresponding sequence $\rho_m \in (-1, 1)$ such that $\Lambda(\rho_m)^{-1/2} \Sigma_m \Lambda(\rho_m)^{-1/2} \rightarrow I_n$ as $m \rightarrow \infty$.
\end{assumption}
\begin{remark}\label{RapproxAAR(1)}
(i) We first note that Assumption \ref{approxAAR(1)} is stronger than Assumption \ref{AAR(1)}. Therefore, under the former assumption the negative result established in Section \ref{neg} concerning tests as in \eqref{tslrv} based on a prewhitened covariance estimator $\hat{\Omega}_{\kappa, M, p}$ \textit{does} apply a fortiori. As a consequence, if $\mathfrak{C}$ satisfies Assumption \ref{approxAAR(1)}, then positive results concerning size and power properties of tests of the form \eqref{tslrv} can only be established under non-generic assumptions on the design matrix. However, as we shall show, positive results can generically be established for an \textit{adjusted} version of such tests.

(ii) Boundedness of $\mathfrak{C}$ is typically satisfied in our setup, as it is always satisfied if $\mathfrak{C}$ consists only of correlation matrices. 

(iii) The last part of the assumption states that elements of $\mathfrak{C}$ that are `close' to being singular can be well approximated by AR(1) correlation matrices. This, together with $\mathfrak{C}_{AR(1)}$ being a subset of $\mathfrak{C}$, readily implies that the singular boundary of $\mathfrak{C}$ must coincide with $\geschw{e_+ e_+', e_- e_-'}$. Therefore, we see that the assumption rules out the existence of rank deficient elements of $\boundary(\mathfrak{C})$ with rank strictly greater than one. As an example, this rules out the case where $\mathfrak{C}$ is the correlation model corresponding to all stationary autoregressive processes of order less than or equal to two (cf. Lemma G.2 in \cite{PP13}). If this is not ruled out, however, further obstructions to good size and power properties can arise along suitable sequences approximating these boundary points (cf. Section 3.2.3 in \cite{PP13}). The possibility of establishing positive results in settings like that is beyond the scope of the present paper and will be discussed elsewhere. 

(iv) We note that Assumption \ref{approxAAR(1)} is clearly satisfied for every covariance model of the form $\mathfrak{C} = \mathfrak{C}_{AR(1)} \cup \mathfrak{C}^{\sharp}$, where $\mathfrak{C}^{\sharp} \subseteq \Rel^{n \times n}$ is a closed set consisting of positive definite correlation matrices. As an example, let $d \in \Nat$ be fixed and let $\mathfrak{C}_{MA(d)}$ denote the set of all correlation matrices corresponding to stationary moving average processes of an order not exceeding $d$, i.e., 
\begin{equation}
\mathfrak{C}_{MA(d)} = \geschw{\Sigma(f_{\alpha, \delta}): \alpha = (1, \alpha_1, \hdots, \alpha_{d})' \in \Rel^{d+1}, \delta > 0},
\end{equation}
where $\Sigma(f_{\alpha, \delta})$ denotes the $n \times n$-dimensional correlation matrix corresponding to the spectral density $f_{\alpha, \delta}(\lambda) = \frac{\delta^2}{2\pi}|\sum_{j = 0}^{d} \alpha_j \exp(-\iota \lambda j)|^2$ (cf. Equation \eqref{covmat}). Then $\mathfrak{C} = \mathfrak{C}_{AR(1)} \cup \closure(\mathfrak{C}_{MA(d)})$ satisfies Assumption \ref{approxAAR(1)}, because every element of the closure of $\mathfrak{C}_{MA(d)}$ is a positive definite correlation matrix (the latter statement follows from Equation \eqref{covmat}, compactness of the unit sphere in $\Rel^{d+1}$, and the Dominated Convergence Theorem).
\end{remark}
Under Assumption \ref{approxAAR(1)} we shall subsequently establish a positive result concerning tests based on a test statistic $T$ as in \eqref{tslrv} with $\hat{\Omega} = \hat{\Omega}_{\kappa, M, p}$. In light of Part (i) of the preceding remark we already know that such a positive result can only be established under non-generic conditions on the design matrix. In particular, the subsequent positive result considers the non-generic case where - besides $g^*_{\kappa, M, p}(., X, R)  \not \equiv 0$, a condition that is generically satisfied under a mild constraint on $n$ (cf. Lemma \ref{nullN*PW}) - the column span of the design matrix includes the vectors $e_+$ and $e_-$ and where $R\hat{\beta}(e_+) = R\hat{\beta}(e_-) = 0$ holds.  
\begin{proposition}\label{excPW} 
Suppose that the triple $\kappa$, $M$, $p$ satisfies Assumption \ref{weightsPWRB}, and that $\mathfrak{C}$ satisfies Assumption \ref{approxAAR(1)}. Let $T$ be the test statistic defined in Equation \eqref{tslrv} with $\hat{\Omega} = \hat{\Omega}_{\kappa, M, p}$. Let $W(C)=\geschw{y\in \mathbb{R}^{n}:T(y)\geq C}$ be the rejection region, where $C$ is a real number satisfying $0<C<\infty$. Suppose further that $e_{+}, e_{-} \in \mathfrak{M}$, $R\hat{\beta}(e_{+})=R\hat{\beta}(e_{-})=0$ and $g^*_{\kappa, M, p}(., X, R)  \not \equiv 0$. Then, the following holds:
\begin{enumerate}
\item The size of the rejection region $W(C)$ is strictly less than $1$, i.e.,
\begin{equation*}
\sup\limits_{\mu _{0}\in \mathfrak{M}_{0}}\sup\limits_{0<\sigma ^{2}<\infty}\sup\limits_{\Sigma \in \mathfrak{C}}P_{\mu_{0},\sigma ^{2}\Sigma }\left(W(C)\right) <1.
\end{equation*}
Furthermore,
\begin{equation*}
\inf_{\mu _{0}\in \mathfrak{M}_{0}}\inf_{0<\sigma ^{2}<\infty }\inf_{\Sigma\in \mathfrak{C}} P_{\mu _{0},\sigma ^{2}\Sigma }\left( W(C)\right) >0.
\end{equation*}
\item The infimal power is bounded away from zero, i.e., 
\begin{equation*}
\inf_{\mu _{1}\in \mathfrak{M}_{1}}\inf\limits_{0<\sigma ^{2}<\infty}\inf\limits_{\Sigma \in \mathfrak{C}}P_{\mu _{1},\sigma ^{2}\Sigma}(W(C))>0.
\end{equation*}
\item For every $0<c<\infty $
\begin{equation*}
\inf_{\substack{ \mu _{1}\in \mathfrak{M}_{1},0<\sigma ^{2}<\infty  \\ 
d\left( \mu _{1},\mathfrak{M}_{0}\right) /\sigma \geq c}}P_{\mu _{1},\sigma ^{2}\Sigma _{m}}(W(C))\rightarrow 1
\end{equation*}
holds for $m\rightarrow \infty $ and for any sequence $\Sigma _{m}\in\mathfrak{C}$ satisfying $\Sigma _{m}\rightarrow \bar{\Sigma}$ with $\bar{\Sigma}$ a singular matrix. Furthermore, for every sequence $0<c_{m}<\infty $
\begin{equation*}
\inf_{\substack{ \mu _{1}\in \mathfrak{M}_{1},  \\ d\left( \mu _{1},\mathfrak{M}_{0}\right) \geq c_{m}}} \inf_{\Sigma \in \mathfrak{C}^*} P_{\mu _{1},\sigma _{m}^{2}\Sigma
}(W(C))\rightarrow 1
\end{equation*}
holds for $m\rightarrow \infty $ whenever $0<\sigma _{m}^{2}<\infty $, $c_{m}/\sigma _{m}\rightarrow \infty $, and $\mathfrak{C}^*$ is a closed subset of $\mathfrak{C}$. [The very last statement even holds if one of the conditions $e_+, e_- \in \mathfrak{M}$ and $R\hat{\beta}(e_+) = R\hat{\beta}(e_-) = 0$ is violated.]
\item For every $\delta $, $0<\delta <1$, there exists a $C(\delta )$, $0<C(\delta )<\infty $, such that
\begin{equation*}
\sup\limits_{\mu _{0}\in \mathfrak{M}_{0}}\sup\limits_{0<\sigma ^{2}<\infty}\sup\limits_{\Sigma \in \mathfrak{C}}P_{\mu _{0},\sigma ^{2}\Sigma}(W(C(\delta )))\leq \delta.
\end{equation*}
\end{enumerate}
\end{proposition}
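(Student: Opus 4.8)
The plan is to obtain all four assertions from the general positive results of \cite{PP13}, the substantive work being the verification of their high-level hypotheses for the prewhitened statistic $T = T_{\kappa,M,p}$ defined in \eqref{tslrv}. Two invariances of $T$, both immediate from its construction in Section \ref{tpwc}, drive the argument. First, fixing $\mu_0^* \in \mathfrak{M}_0$, the statistic is homogeneous of degree zero along rays, i.e. $T(\mu_0^* + tw) = T(\mu_0^* + w)$ for every $t > 0$ and every $w$: checking the construction step by step, the VAR-OLS coefficient $\hat A^{(p)}$, the recoloring factor $(I_k - \sum_{l} \hat A^{(p)}_l)^{-1}$ and every admissible bandwidth $M \in \mathbb{M}_{AM} \cup \mathbb{M}_{NW} \cup \mathbb{M}_{KV}$ are scale invariant, while $\hat Z$, and hence $\hat\Psi$ and $\hat\Omega_{\kappa,M,p}$, are homogeneous of degree two in $\hat u$; since $R\hat\beta(\mu_0^* + tw) - r = t R\hat\beta(w)$, the powers of $t$ cancel in the quadratic form. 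Second, because $e_+, e_- \in \mathfrak{M}$ (so $\hat u(e_\pm) = 0$) and $R\hat\beta(e_\pm) = 0$, the statistic is invariant under translation along $e_+$ and along $e_-$, that is $T(y + a e_\pm) = T(y)$ for all $a \in \Rel$. These invariances place us in the reduced setting of \cite{PP13}, allowing passage to the unit sphere, and by Remark \ref{remarkthmlrvPW}(i) they record that under $H_0$ the rejection probability is a function of $\Sigma$ alone, which I denote $\phi(\Sigma)$.

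The second ingredient is the boundary geometry. Under Assumption \ref{approxAAR(1)}, Remark \ref{RapproxAAR(1)}(iii) shows that the only rank-deficient points of $\boundary(\mathfrak{C})$ are $e_+ e_+'$ and $e_- e_-'$, and that any sequence $\Sigma_m \in \mathfrak{C}$ approaching either one is matched by an AR(1) sequence with $\Lambda(\rho_m)^{-1/2} \Sigma_m \Lambda(\rho_m)^{-1/2} \to I_n$; this uniform control over \emph{all} approaching sequences, not merely AR(1) ones, is precisely why the stronger Assumption \ref{approxAAR(1)} is needed for positive results. Fixing $\mu_0^* \in \mathfrak{M}_0$ and writing $y = \mu_0^* + \sigma \Sigma_m^{1/2} Z$ with $Z \sim N(0, I_n)$, translation invariance along $e_+$ lets me discard the $e_+$-component of the noise, and scale invariance lets me keep only the \emph{direction} of the surviving fluctuation $\Pi_{e_+^{\perp}} \Sigma_m^{1/2} Z$. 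The matching with $\Lambda(\rho_m)$ then yields that this direction converges in law to the direction of a nondegenerate Gaussian on $e_+^{\perp}$, an absolutely continuous law on the sphere $S^{n-1} \cap e_+^{\perp}$; hence $T$ converges in distribution to $\tilde T$ evaluated at this random direction, where $\tilde T$ is the restriction to the sphere of the rational function $y \mapsto (R\hat\beta(y) - r)' \hat\Omega_{\kappa,M,p}(y)^{-1}(R\hat\beta(y) - r)$ (symmetrically for $e_-$). The conclusion I must extract is the nondegeneracy of this limit: that $\tilde T$ is a nonconstant nonnegative rational function, vanishing on the nonempty proper subsphere $\{R\hat\beta = 0\} \cap e_+^{\perp}$ (nonempty because $\ker(R\hat\beta)$ contains the independent vectors $e_+, e_-$, forcing $q \leq n-2$) and growing without bound near the singular locus of $\hat\Omega_{\kappa,M,p}$ (nonempty by $g^*_{\kappa,M,p}(\cdot,X,R) \not\equiv 0$ and Lemma \ref{N*PW}), so that $0 < \mathbb{P}(\tilde T \geq C) < 1$ for \emph{every} finite $C$, with $\{\tilde T = C\}$ a null set and hence no atom at $C$.

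The null-side assertions then follow by combining this boundary limit with compactness of $\closure(\mathfrak{C})$. For each fixed positive definite $\Sigma$ the measure $P_{\mu_0^*, \sigma^2 \Sigma}$ is absolutely continuous and $g^*_{\kappa,M,p}(\cdot,X,R) \not\equiv 0$ forces both $\{T < C\}$ and $\{T \geq C\}$ to carry positive Lebesgue measure, so $0 < \phi(\Sigma) < 1$ and $\phi$ is continuous there; at the singular boundary $\phi \to \mathbb{P}(\tilde T \geq C) \in (0,1)$ by the previous paragraph. A compactness argument over $\closure(\mathfrak{C})$ then gives $\sup_\Sigma \phi(\Sigma) < 1$ and $\inf_\Sigma \phi(\Sigma) > 0$, which is Part 1. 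Part 4 is the matching tightness statement: the boundary limit $\tilde T$ is finite almost surely, so $\mathbb{P}(\tilde T \geq C) \to 0$ as $C \to \infty$, and together with $\phi_C(\Sigma) := P_{\mu_0^*,\sigma^2\Sigma}(T \geq C) \to 0$ for each interior $\Sigma$ and the continuous extension to $\closure(\mathfrak{C})$ this yields $\sup_\Sigma \phi_C(\Sigma) \to 0$, hence a finite $C(\delta)$ with size at most $\delta$. For the alternative-side assertions I would rerun the boundary analysis at $\mu_1 \in \mathfrak{M}_1$: there $\hat u(\mu_1 + \text{noise})$ still collapses as $\Sigma \to e_\pm e_\pm'$, so $\hat\Omega_{\kappa,M,p} \to 0$, while the numerator retains the fixed nonzero signal $R\hat\beta(\mu_1) - r$ (which, by $R\hat\beta(e_\pm)=0$, is not carried by the vanishing concentration direction); consequently $T \to \infty$ in probability and the rejection probability tends to one, uniformly over $d(\mu_1, \mathfrak{M}_0)/\sigma \geq c$ along any singular sequence, which is the first statement of Part 3. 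Invoking Remark \ref{remarkthmlrvPW}(i) to compactify the reduced parameter $(\langle (R\beta - r)/\sigma \rangle, \Sigma)$ — whose power function extends continuously, equals $\phi(\Sigma) > 0$ at the no-signal limit and one at the infinite-signal and singular limits — then yields the strictly positive infimal power of Part 2. The final bracketed statement of Part 3 is classical high signal-to-noise consistency: on a closed subset $\mathfrak{C}^*$ of $\mathfrak{C}$ the estimator $\hat\Omega_{\kappa,M,p}$ is uniformly nondegenerate, so $c_m/\sigma_m \to \infty$ drives the noncentrality, and hence $T$, to infinity without any use of the alignment conditions on $e_\pm$.

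The main obstacle is the nondegeneracy asserted at the end of the second paragraph: establishing that the boundary limit law of $T$ is absolutely continuous with $\{\tilde T = C\}$ null, and that $\tilde T$ both vanishes on a set of positive sphere-measure and is genuinely unbounded. The two invariances reduce this to a statement about the rational function $\tilde T$ on $S^{n-1} \cap e_\pm^{\perp}$, whose denominator is governed by $g^*_{\kappa,M,p}(\cdot, X, R)$ via Lemma \ref{N*PW} and Lemma \ref{NPWRB}; the delicate point is that the degenerating covariance must leave a nondegenerate surviving fluctuation and a well-behaved estimator limit. Carrying this through for the prewhitened statistic is where its specific structure enters: one must track the VAR-OLS prewhitening, the recoloring step, and the data-dependent bandwidth $M$ as $\Sigma \to e_\pm e_\pm'$, verifying via the scale invariance of $M$ and the algebraic descriptions in Lemmas \ref{NPWRB} and \ref{N*PW} that $M$ stays in a range where the recolored $\hat\Omega_{\kappa,M,p}$ neither degenerates nor blows up pathologically, so that it possesses the claimed nonconstant rational limit along every sequence permitted by Assumption \ref{approxAAR(1)}.
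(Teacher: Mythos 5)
Your overall plan coincides with the paper's: the paper's entire proof of this proposition is a verification of the hypotheses of Theorem 5.21 in \cite{PP13}. Concretely, it invokes Lemma \ref{A567PW} (which needs exactly $g^*_{\kappa,M,p}(.,X,R)\not\equiv 0$ and Assumption \ref{weightsPWRB}) to obtain Assumptions 5--7 of \cite{PP13}, almost-everywhere positive definiteness of $\hat{\Omega}_{\kappa,M,p}$, and $G(\mathfrak{M}_0)$-invariance of $T$; it uses Assumption \ref{approxAAR(1)} together with Remark \ref{RapproxAAR(1)}(iii) and Remark 5.14(ii) of \cite{PP13} to identify $J(\mathfrak{C})=\lspan(e_+)\cup\lspan(e_-)$; and it checks that $e_+,e_-\in\mathfrak{M}$ with $R\hat{\beta}(e_+)=R\hat{\beta}(e_-)=0$ gives $J(\mathfrak{C})\subseteq\mathfrak{M}_0-\mu_0$, i.e.\ the invariance condition in Equation (34) of \cite{PP13}. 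All four conclusions then come packaged from that theorem, with a subsequence argument and compactness of $\mathfrak{C}^*$ (a closed subset of the norm-bounded $\mathfrak{C}$, consisting of positive definite matrices) for the second claim of Part 3, and with the observation that $e_+$, $e_-$ enter only through condition (34) for the bracketed claim. Your two invariance computations and your identification of the singular boundary are exactly this verification step, and they are done correctly.

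The genuine gap is that you then set the citation aside and attempt to re-derive the analytic core of Theorem 5.21 --- the boundary limit law and its nondegeneracy --- and this re-derivation fails at precisely the step you yourself flag as ``the main obstacle''. First, $T$ is \emph{not} a rational function: for $M\in\mathbb{M}_{AM}\cup\mathbb{M}_{NW}$ the bandwidth enters $\mathcal{W}_{n-p}(y)$ through fractional powers (the exponents $c_2$, $\bar{c}_3$, e.g.\ $1/3$ or $1/5$) and through the kernel $\kappa$, which for the Bartlett, Parzen or Quadratic-Spectral kernels is not rational; only the exceptional sets $N(\hat{\Omega}_{\kappa,M,p})$ and $N^*(\hat{\Omega}_{\kappa,M,p})$ are algebraic (Lemmas \ref{NPWRB} and \ref{N*PW}), not $T$ itself. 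Hence your claims that $\tilde T$ is rational, that $\{\tilde T=C\}$ is a null set, and that the limit law is atomless at $C$ are unsupported (they would be fine only for $M\in\mathbb{M}_{KV}$, where $\mathcal{W}_{n-p}$ is a constant matrix). Second, your witnesses for $\Pr(\tilde T<C)>0$ are $e_+$ and $e_-$, but these lie in $\mathfrak{M}\subseteq N^*(\hat{\Omega}_{\kappa,M,p})$, where $T$ equals $0$ only by the arbitrary convention in \eqref{tslrv} and where $T$ is not continuous, so the open-neighborhood argument cannot start from them; and the unboundedness of $\tilde T$ near the singular locus, which you need for $\Pr(\tilde T\geq C)>0$ for \emph{every} $C$, is asserted but never established (the numerator may vanish together with the denominator there). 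These nondegeneracy facts are exactly what Theorem 5.21 of \cite{PP13} supplies; without citing it, or reproving it for the non-rational prewhitened statistic, Parts 1, 2 and 4 remain open. Two smaller slips: $\hat{Z}$ is homogeneous of degree one in $\hat{u}$, not two (it is $\check{\Psi}$ and $\hat{\Psi}$ that are of degree two); and in Part 3 the needed uniformity comes from compactness of $\mathfrak{C}^*$, not from ``uniform nondegeneracy of $\hat{\Omega}_{\kappa,M,p}$ over $\mathfrak{C}^*$'' --- the estimator is a function of $y$, not of $\Sigma$.
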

Under the maintained assumptions on the hypothesis and the design Proposition \ref{excPW} shows that given any level of significance $0 < \delta < 1$, a critical value can be chosen in such a way that the test obtained holds its size, while its nuisance-minimal power at every point $\mu_1$ in the alternative is bounded away from zero. As Theorem \ref{thmlrvPW} in combination with Proposition \ref{generic} shows, this is impossible for generic elements of the space of all design matrices. Additionally, Part 3 of the proposition shows that the power approaches one in certain parts of the parameter space corresponding to the alternative hypothesis. These parts are characterized by $\norm{(R \beta^{(1)} - r)/\sigma}$ being bounded away from zero and $\Sigma \rightarrow \bar{\Sigma}$ with $\bar{\Sigma}$ being singular, or $\norm{(R \beta^{(1)} - r)/\sigma} \rightarrow \infty$ and $\Sigma \rightarrow \bar{\Sigma}$ with $\bar{\Sigma}$ positive definite, and where in both cases $\beta^{(1)}$ is the parameter vector corresponding to $\mu_1$ (note that $d(\mu_1, \mathfrak{M}_0)$ is bounded from above and below by multiples of $\norm{R\beta^{(1)} -r}$, where the constants involved are positive and depend only on $X$, $R$ and $r$).

\begin{remark}\label{Rposbound}
Suppose that instead of Assumption \ref{approxAAR(1)} it is known that the covariance model satisfies the following variant of Assumption \ref{approxAAR(1)} that rules out AR(1) correlation matrices $\Lambda(\rho)$ with $\rho$ arbitrarily close to $-1$: 
\begin{quote}
The covariance model $\mathfrak{C} \subseteq \Rel^{n \times n}$ is norm-bounded and there exists an $\varepsilon \in (0, 1]$ such that $\mathfrak{C}_{AR(1)}(\varepsilon) \subseteq \mathfrak{C}$ (cf. Remark \ref{boundaway}). Furthermore, for every sequence $\Sigma_m \in \mathfrak{C}$ that converges to $\bar{\Sigma} \in \boundary(\mathfrak{C})$ satisfying $\rank(\bar{\Sigma}) < n$ there exists a corresponding sequence $\rho_m \in (-1+\varepsilon, 1)$ such that $\Lambda(\rho_m)^{-1/2} \Sigma_m \Lambda(\rho_m)^{-1/2} \rightarrow I_n$ as $m \rightarrow \infty$.
\end{quote}
Let $T$ and $W(C)$ be defined as in Proposition \ref{excPW} above, and suppose further that $e_+ \in \mathfrak{M}$, i.e., the regression contains an intercept, that $R\hat{\beta}(e_+) = 0$, i.e., the hypothesis does not involve the intercept, and that $g^*_{\kappa, M, p}(., X, R) \not \equiv 0$. Then one can show (using essentially the same argument as in the proof of Proposition \ref{excPW}) that the Conclusions 1-4 of Proposition \ref{excPW} hold in this setup. In particular, for any given $\delta \in (0, 1)$ there exists a critical value $C(\delta) = C(\delta, \varepsilon)$ such that the test with rejection region $W(C(\delta, \varepsilon))$ has size not greater than $\delta$. This establishes a \textit{positive result} concerning a test based on the test statistic $T$ with $\hat{\Omega} = \hat{\Omega}_{\kappa, M, p}$, and based on the non-standard critical value $C(\delta, \varepsilon)$, which in practice can be obtained as explained in the discussion following Theorem \ref{TU_3}. However, the test obtained critically depends on $\varepsilon$, which in practice will typically be difficult to choose. If one uses a test with critical region $W(C(\delta, \varepsilon^*))$ where $\varepsilon^* > \varepsilon$, then the size of this test might exceed $\delta$. In light of this drawback, it is important to stress that autocorrelation-robust-testing is possible \textit{without} imposing such an artificial condition that rules out AR(1) correlation matrices $\Lambda(\rho)$ with $\rho$ arbitrarily close to $-1$: Theorems \ref{TU_3} and \ref{genericADJ} in the present section show that (at the small cost of including the artificial regressor $e_-$) a generic positive result can be obtained under the more natural Assumption \ref{approxAAR(1)}. Theorem \ref{TU_3} furthermore shows that including the artificial regressor $e_-$ leads to good power properties of the resulting test for covariance matrices close to $\Lambda(-1)$.
\end{remark}

\smallskip

Proposition \ref{excPW} assumes, among others, that $X$ satisfies $e_+, e_- \in \lspan(X)$, an assumption which is satisfied only by non-generic elements of the set of all design matrices. In the following we shall now consider the (generic) situation where $e_+, e_- \in \lspan(X)$ is \textit{violated}. Proposition \ref{excPW} is then clearly not applicable. However, as we shall see, a positive result similar to Proposition \ref{excPW} can be established for an \textit{adjusted} test statistic. To explain how the adjustment procedure works, suppose that we have a triple $\kappa$, $M$, $p$ satisfying Assumption \ref{weightsPWRB} which we want to use for covariance estimation. Suppose further that $1 \leq p \leq \frac{n}{k+3}$ holds, which is typically satisfied. The following theorem now shows that under certain conditions on the design matrix - which are shown to be generically satisfied in Proposition \ref{genericADJ} below, and which in particular require $e_+, e_- \in \lspan(X)$ to be violated - one can work with an \textit{adjusted} test statistic that has improved size and power properties, and which is constructed as follows: instead of basing the construction of the test statistic on the true design matrix $X$ and on $R$, we first construct an artificial design matrix $\bar{X}$ of full column rank satisfying $\lspan(\bar{X}) = \lspan(X, e_+, e_-)$ by adding the vectors $e_+$ and/or $e_-$ to $X$. Furthermore, we construct a corresponding matrix $\bar{R}$, where zero columns are added to $R$ such that $\bar{R}$ and $\bar{X}$ have the same number of columns. Then we construct a test statistic $\bar{T}$ as in Equation \eqref{tslrv}, but with $X$ and $R$ replaced by $\bar{X}$ and $\bar{R}$, respectively, and where the covariance estimator is based on $\kappa$, $M$ and $p$ as above besides some minor updates in the construction of $M$ described below. The subsequent theorem shows that if $e_+, e_- \in \lspan(X)$ is violated, if every $e \in \geschw{e_+, e_-} \cap \lspan(X)$ satisfies $R\hat{\beta}(e) = 0$, if $\rank(\bar{X})< n$, and if an assumption on $\bar{X}$ and $\bar{R}$ analogous to the assumption $g^*_{\kappa, M, p}(., X, R)  \not \equiv 0$ in Proposition \ref{excPW} is satisfied, then for every critical value $0 < C < \infty$ the test with critical region $\bar{W}(C) = \geschw{y \in \Rel^n : \bar{T}(y) \geq C}$ has the same Properties (1)-(4) as $W(C)$ in Proposition \ref{excPW}. 
\begin{theorem}\label{TU_3}
Suppose that the triple $\kappa$, $M$, $p$ satisfies Assumption \ref{weightsPWRB}, that $p$ additionally satisfies $1 \leq p \leq \frac{n}{k+3}$, and that $\mathfrak{C}$ satisfies Assumption \ref{approxAAR(1)}. Suppose that one of the following (mutually exclusive) scenarios applies:
\begin{enumerate}
\item $e_{+}\in \mathfrak{M}$ with $R\hat{\beta}_X(e_{+})=0$, $e_{-}\notin 
\mathfrak{M}$ and $\bar{k} = k + 1 < n$. Let $\bar{X}=\left( X,e_{-}\right) $ and define $\bar{R}=\left( R,0\right) \in \Rel^{q \times \bar{k}}$.
\item $e_{+}\notin \mathfrak{M}$,  $e_{-}\in \mathfrak{M}$ with $R\hat{%
\beta}_X(e_{-})=0$ and $\bar{k} = k + 1 < n$. Let $\bar{X}=\left( X,e_{+}\right) $ and define $\bar{R}=\left( R,0\right) \in \Rel^{q \times \bar{k}}$.
\item $e_{+}\notin \mathfrak{M}$, $e_{-}\notin \mathfrak{M}$ with $%
\rank\left( X,e_{+},e_{-}\right) =k+2$ and $\bar{k} = k + 2 < n$. Let $\bar{X}=\left(
X,e_{+},e_{-}\right) $ and define $\bar{R}=\left( R,0,0\right) \in \Rel^{q \times\bar{k}}$.
\item $e_{+}\notin \mathfrak{M}$, $e_{-}\notin \mathfrak{M}$ with $%
\rank\left( X,e_{+},e_{-}\right) =k+1$ and $\bar{k} = k + 1 < n$. Let $\bar{X}=\left( X,e_{+}\right) $ and define $\bar{R}=\left( R,0\right) \in \Rel^{q \times \bar{k}}$.
%
% Alternativ zum letzten Punkt, falls die Bedingung an $g$ unten nicht erfüllt ist. Macht generisch keinen Unterschied, deswegen weggelassen!
%\item $e_{+}\notin \mathfrak{M}$, $e_{-}\notin \mathfrak{M}$ with $%
%\rank\left( X,e_{+},e_{-}\right) =k+1$ and $k + 1 < n$. Let matrix $\bar{X}=\left( X,e_{-}\right)$, define $\bar{R}%
%=\left( R,0\right) \in \Rel^{q \times (k+1)}$ and if the construction of $M$ involves a weights vector $\omega$ set $\bar{\omega} = (\omega', 0)'\in \Rel^{k+1}$.
\end{enumerate}
Then in all cases $\bar{X}$ is a matrix of full column rank. Define
\begin{equation}
\bar{T}(y)= \begin{cases}
(\bar{R}\hat{\beta}_{\bar{X}}(y)-r)^{\prime }\hat{\Omega}_{\kappa, \bar{M}, p, \bar{X}}^{-1}(y)(\bar{R}\hat{\beta}_{\bar{X}}(y)-r) & 
\text{if } y \notin N^*(\hat{\Omega}_{\kappa, \bar{M}, p, \bar{X}}), \\[7pt]
0 & \text{else},%
\end{cases}%
\end{equation}
where $\hat{\Omega}_{\kappa, \bar{M}, p, \bar{X}} \left( y\right)$ is the estimator one would obtain following Steps 1-3 in Section \ref{tpwc} if $\bar{X}$ was the underlying design matrix, $(\bar{R}, r)$ was the hypothesis to be tested and where $\bar{M}$ is defined as follows: in case $M \in \mathbb{M}_{KV}$ we set $\bar{M} \equiv M$; in case $M \in \mathbb{M}_{AM}$ we compute $\bar{M}$ as outlined in Section \ref{MA92} (using as input $\hat{Z}_{\bar{X}}(y)$ as opposed to $\hat{Z}_{X}(y)$, and replacing $k$ by $\bar{k}$), with the same constants $c_1$ and $c_2$ and $j$ as used in the construction of $M$, but with $\omega$ replaced by $\bar{\omega} = (\omega, 0)' \in \Rel^{\bar{k}}$; in case $M \in \mathbb{M}_{NW}$ we compute $\bar{M}$ as outlined in Section \ref{MNW94} (using as input $\hat{Z}_{\bar{X}}(y)$ as opposed to $\hat{Z}_{X}(y)$), with the same constants $\bar{c}_i$ for $i = 1, 2, 3$ and the same weights $w$ as used in the construction of $M$, but with $\omega$ replaced by $\bar{\omega} = (\omega, 0)' \in \Rel^{\bar{k}}$. Let $\bar{W}(C)=\left\{ y\in \mathbb{R}^{n}:\bar{T}(y)\geq C\right\} $ be the rejection region where $C$ is a real number satisfying $0<C<\infty $. If $g_{\kappa, \bar{M}, p}^*(. , \bar{X}, \bar{R}) \not \equiv 0$ (where $g_{\kappa, \bar{M}, p}^*$ is the function obtained from Lemma \ref{N*PW} applied to $\kappa$, $\bar{M}$ and $p$ and acting as if $\bar{X}$ was the underlying design matrix and $(\bar{R}, r)$ was the hypothesis to be tested), or equivalently if $ N^*(\hat{\Omega}_{\kappa, \bar{M}, p, \bar{X}}) \neq \Rel^n$, then in each of the four scenarios above the Conclusions (1)-(4) of Proposition \ref{excPW} hold with $W(C)$ replaced by $\bar{W}(C)$.
\end{theorem}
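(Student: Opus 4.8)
The plan is to reduce the whole statement to a single application of Proposition \ref{excPW}, applied not to $(X,R)$ but to the augmented triple $(\kappa,\bar M,p)$ together with the design matrix $\bar X$ and the hypothesis $(\bar R,r)$. The key observation is that, by construction, $\bar T$ is exactly the test statistic \eqref{tslrv} and $\bar W(C)$ exactly the rejection region one obtains when $\bar X$ plays the role of the design and $(\bar R,r)$ the role of the hypothesis, with covariance estimator $\hat\Omega_{\kappa,\bar M,p,\bar X}$. Hence, once I verify that this augmented setup meets every hypothesis of Proposition \ref{excPW}, its Conclusions (1)--(4) transfer verbatim to $\bar W(C)$. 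I would additionally record that $\mathfrak{M}_0\subseteq\bar{\mathfrak{M}}_0$ and $\mathfrak{M}_1\subseteq\bar{\mathfrak{M}}_1$ (since $\bar R=(R,0)$ annihilates the adjoined coordinates), so that the same conclusions remain valid if one prefers to phrase them for the original null and alternative.

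First I would dispose of the standing regularity conditions. Full column rank of $\bar X$ follows in each scenario from the stated rank hypotheses ($e_-\notin\mathfrak{M}$, resp.\ $e_+\notin\mathfrak{M}$, resp.\ $\rank(X,e_+,e_-)\in\{k+1,k+2\}$). Next I would check that $(\kappa,\bar M,p)$ satisfies Assumption \ref{weightsPWRB} relative to the dimension $\bar k$: Part 1 is untouched; Part 3 holds because $\bar k\le k+2$ gives $n/(\bar k+1)\ge n/(k+3)\ge p$, which is precisely why the extra hypothesis $p\le n/(k+3)$ is imposed; and Part 2 holds because $\bar\omega=(\omega,0)'$ is again a nonzero vector with nonnegative entries, and $\bar M$ is built by the very same formulas of Sections \ref{MA92}--\ref{MR} with $\hat Z_{\bar X}$ in place of $\hat Z_X$, so $\bar M\in\mathbb{M}_{AM}\cup\mathbb{M}_{NW}\cup\mathbb{M}_{KV}$. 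Assumption \ref{approxAAR(1)} concerns only $\mathfrak{C}$ and is unchanged, while $g^*_{\kappa,\bar M,p}(.,\bar X,\bar R)\not\equiv 0$ is the standing assumption, equivalent to $N^*(\hat\Omega_{\kappa,\bar M,p,\bar X})\neq\Rel^n$ by Part 5 of Lemma \ref{N*PW}.

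The substantive step is to verify the two structural requirements of Proposition \ref{excPW}, namely $e_+,e_-\in\lspan(\bar X)$ and $\bar R\hat\beta_{\bar X}(e_+)=\bar R\hat\beta_{\bar X}(e_-)=0$. Membership is immediate because $\lspan(\bar X)=\lspan(X,e_+,e_-)$ in every scenario. For the vanishing conditions I would argue by cases. If $e\in\{e_+,e_-\}$ is one of the adjoined columns of $\bar X$, then $\hat\beta_{\bar X}(e)$ equals the corresponding standard basis vector and $\bar R$ carries a zero in that coordinate, so $\bar R\hat\beta_{\bar X}(e)=0$ at once. If instead $e\in\lspan(X)$ (the case $e=e_+$ in Scenario 1 and $e=e_-$ in Scenario 2), then the block form of $\bar X$ and uniqueness of the representation give $\hat\beta_{\bar X}(e)=(\hat\beta_X(e),0)'$, whence $\bar R\hat\beta_{\bar X}(e)=R\hat\beta_X(e)=0$ by the scenario hypothesis. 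This settles Scenarios 1--3 cleanly.

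I expect the genuine obstacle to be Scenario 4, where $e_-\in\lspan(\bar X)$ even though $e_-$ is neither an adjoined column nor an element of $\lspan(X)$, so neither of the clean arguments above is available. Here $\hat\beta_{\bar X}(e_-)$ must instead be read off from the degenerate relation $\rank(X,e_+,e_-)=k+1$: writing $e_-=X\gamma_1+\gamma_2 e_+$ one has $\bar R\hat\beta_{\bar X}(e_-)=R\gamma_1$, and the entire reduction hinges on showing that $R\gamma_1=0$. This has to be extracted from the precise geometry of the configuration --- the fact that $\lspan(X)$ meets $\lspan(e_+,e_-)$ in a single line, spanned by some $\alpha e_++\beta e_-$ with $\alpha,\beta\neq 0$ --- together with the structure of the hypothesis $(R,r)$; it is the delicate heart of the argument and the part I would budget the most effort for. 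Once $R\gamma_1=0$ is established, Proposition \ref{excPW} applies uniformly across all four scenarios and delivers Conclusions (1)--(4) for $\bar W(C)$. As a concluding bookkeeping point I would confirm that the zero-augmented weights $\bar\omega$ make $\bar M$ interact correctly with Lemmas \ref{NPWRB} and \ref{N*PW} for the design $\bar X$, so that the polynomial $g^*_{\kappa,\bar M,p}(.,\bar X,\bar R)$ appearing in the standing assumption is genuinely the object those lemmas produce.
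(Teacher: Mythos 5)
Your reduction is genuinely different from the paper's proof. The paper never changes the testing problem: it keeps the original null and alternative $\mathfrak{M}_0,\mathfrak{M}_1$ and applies the general result behind Proposition \ref{excPW} (Theorem 5.21 of \cite{PP13}) to the \emph{modified estimators} $\check{\beta}=(I_k,0)\hat{\beta}_{\bar{X}}$ and $\check{\Omega}=\hat{\Omega}_{\kappa,\bar{M},p,\bar{X}}$, checking Assumption 5 of \cite{PP13} via Lemma \ref{A567PW} (applied as if $\bar{X}$ were the design) and then invoking Part 1 of Proposition 5.23 of \cite{PP13} to get the required invariance condition, which reduces to verifying $\lspan(J(\mathfrak{C}))\cap\mathfrak{M}\subseteq\mathfrak{M}_0-\mu_0$ in each scenario. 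You instead apply Proposition \ref{excPW} verbatim to the augmented problem $(\bar{X},\bar{R},r)$ and try to transfer its conclusions back through the inclusions $\mathfrak{M}_0\subseteq\bar{\mathfrak{M}}_0$, $\mathfrak{M}_1\subseteq\bar{\mathfrak{M}}_1$. This difference is not cosmetic, because the theorem's conclusions quantify over the \emph{original} $\mathfrak{M}_0$ and $\mathfrak{M}_1$, and two gaps open up in the transfer.

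The first gap is Conclusion 3. Since $\bar{\mathfrak{M}}_0=\mathfrak{M}_0+\lspan(E)$, where $E$ denotes the adjoined columns, one has $d(\mu_1,\bar{\mathfrak{M}}_0)\leq d(\mu_1,\mathfrak{M}_0)$, so the set $\geschw{\mu_1\in\mathfrak{M}_1: d(\mu_1,\mathfrak{M}_0)/\sigma\geq c}$ over which the theorem takes the infimum is \emph{not} contained in the set $\geschw{\bar{\mu}_1\in\bar{\mathfrak{M}}_1: d(\bar{\mu}_1,\bar{\mathfrak{M}}_0)/\sigma\geq c}$ that your augmented Conclusion 3 controls; the monotonicity argument that works for Conclusions 1, 2 and 4 fails here. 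The step is repairable: in every scenario $\lspan(E)\cap\mathfrak{M}=\geschw{0}$, which by a compactness argument yields a $\lambda>0$ with $d(\mu_1,\bar{\mathfrak{M}}_0)\geq\lambda\, d(\mu_1,\mathfrak{M}_0)$ for all $\mu_1\in\mathfrak{M}$, and one can then invoke the augmented statement with $c$ replaced by $\lambda c$ (and $c_m$ by $\lambda c_m$). But this equivalence-of-distances argument is missing from your proposal, and without it the transfer of Conclusion 3 is simply asserted.

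The second gap is fatal to your plan as written. You correctly isolate that Scenario 4 requires $\bar{R}\hat{\beta}_{\bar{X}}(e_-)=R\gamma_1=0$, where $e_-=X\gamma_1+\gamma_2 e_+$, and you hope to extract this from ``the precise geometry of the configuration''. It cannot be extracted, because it is not implied by Scenario 4's hypotheses: take $n$ even, $k=1$, $X=e_++e_-$ and $R=1$. Then $e_+\notin\mathfrak{M}$, $e_-\notin\mathfrak{M}$, $\rank(X,e_+,e_-)=2=k+1$, yet $e_-=X-e_+$ gives $\gamma_1=1$ and $R\gamma_1=1\neq 0$. So the hypothesis of Proposition \ref{excPW} for the augmented problem ($\bar{R}\hat{\beta}_{\bar{X}}(e_-)=0$) genuinely fails in admissible Scenario 4 configurations, and no amount of effort closes this step; indeed, in such a configuration Part 4 of Theorem \ref{thmlrvPW}, applied to the pair $(\bar{X},\bar{R})$, produces size equal to one, so a reduction to Proposition \ref{excPW} is structurally unavailable there. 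Note that the paper's own proof does not establish your identity either: it verifies the different sufficient condition $\lspan(J(\mathfrak{C}))\cap\mathfrak{M}\subseteq\mathfrak{M}_0-\mu_0$ required by Proposition 5.23 of \cite{PP13}, calls it obvious in Scenarios 1--3, and for Scenario 4 defers entirely to the argument in the proof of Part 4 of Theorem 3.8 in \cite{PP13}. Whatever that deferred argument does, your Scenario 4 cannot be completed along the lines you propose, and this is precisely the scenario you flagged as the one needing the most effort.
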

The procedure outlined in the preceding theorem is based on an artificial design matrix $\bar{X}$ which is obtained from $X$ by adding either one or both elements of the set $\geschw{e_+, e_-}$ to the columns of $X$. If the so-obtained matrix $\bar{X}$ satisfies $g^*_{\kappa, \bar{M}, p}(., \bar{X}, \bar{R}) \not \equiv 0$, then the results from Proposition \ref{excPW} carry over to the rejection regions derived from $\bar{T}$. In particular the adjusted test statistic $\bar{T}$ leads to rejection regions the size of which is bounded away from one and such that the nuisance minimal power is bounded away from zero. Besides these improvements, the adjustment procedure is extremely convenient from a computational perspective, as the adjusted test statistic $\bar{T}$ does not require any additional implementations. It is based on the same algorithm as the calculation of $T$, only with a different design matrix. 

The theorem shows that for every level of significance $0 < \delta < 1$, there exists a critical value $C(\delta)$ such that the rejection region $\bar{W}(C(\delta))$ has size smaller than $\delta$. The critical value can be determined as follows: First of all, due to certain invariance properties of $\bar{T}$ (cf. the proof of Theorem \ref{TU_3}), the probabilities $P_{\mu_0, \sigma^2 \Sigma}(\bar{W}(C))$ do not depend on $\mu_0$ and $\sigma^2$. Hence, for any fixed $0 < C < \infty$, the maximal rejection probability under the null can be approximated numerically by simulating the rejection probabilities from a finite subset of $\mathfrak{C}$, and then doing a grid search. In a second step $C(\delta)$ can be approximated by a line search exploiting monotonicity of $P_{\mu_0, \sigma^2 \Sigma}(\bar{W}(C))$ in the critical value. 

\smallskip

The adjustment procedure described in Theorem \ref{TU_3} is applicable and yields an improved test under the assumption that $e_+, e_- \in \lspan(X)$ is violated (and hence the positive result in Proposition \ref{excPW} concerning the \textit{unadjusted} test does not apply), that every $e \in \geschw{e_+, e_-} \cap \lspan(X)$ satisfies $R\hat{\beta}(e) = 0$, that $\bar{k} < n$ and that $g^*_{\kappa, \bar{M}, p}(., \bar{X}, \bar{R}) \not \equiv 0$. Given a hypothesis $(R, r)$ these are conditions on the design matrix $X$. Our final result now shows (under mild constraints on $n$) that these conditions are generically satisfied in the set of all design matrices $\mathfrak{X}_0$; and also in $\tilde{\mathfrak{X}}_0$, the set of all design matrices the first column of which is the intercept, under the additional condition that the first column of $R$ is zero. Under Assumption \ref{approxAAR(1)} we hence see that although rejection regions based on $T$ generically break down as a consequence of Proposition \ref{generic}, this problem can generically be resolved by using rejection regions based on the adjusted test statistic $\bar{T}$, unless the regression includes an intercept and the first column of $R$ is nonzero.
\begin{proposition}\label{genericADJ}
Fix a hypothesis $(R,r)$ with $\rank(R) = q$, suppose that the triple $\kappa$, $M$, $p$ satisfies Assumption \ref{weightsPWRB}, that $p$ additionally satisfies $1 \leq p \leq \frac{n}{k+3}$ and that $\mathfrak{C}$ satisfies Assumption \ref{approxAAR(1)}. Then the following holds, where $p^* = p + (p\bmod2)$.
\begin{enumerate}
\item If $(k+3)(p^* + 2) + p-1 + \mathbf{1}_{\mathbb{M}_{AM}}(M) \leq n$, then for $\lambda_{\Rel^{n \times k}}$-almost every design matrix $X \in \mathfrak{X}_0 \subseteq \Rel^{n \times k}$ Scenario 3 in Theorem \ref{TU_3} applies, and the Conclusions (1)-(4) of Proposition \ref{excPW} hold for any critical value $0 < C < \infty$ with $W(C)$ replaced by $\bar{W}(C) = \geschw{y \in \Rel^n: \bar{T}(y) \geq C}$, where $\bar{T}$ is constructed as outlined in Theorem \ref{TU_3}.
\item Suppose that the first column of $R$ is zero, that $k \geq 2$ and assume that $(k+2)(p^* + 2) + p-1 + \mathbf{1}_{\mathbb{M}_{AM}}(M) \leq n$ holds. Then for $\lambda_{\Rel^{n \times (k-1)}}$-almost every $\tilde{X} \in \tilde{\mathfrak{X}}_0$ Scenario 1 of Theorem \ref{TU_3} applies to $X = (e_+, \tilde{X})$, and the Conclusions (1)-(4) of Proposition \ref{excPW} hold for any critical value $0 < C < \infty$ with $W(C)$ replaced by $\bar{W}(C) = \geschw{y \in \Rel^n: \bar{T}(y) \geq C}$, where $\bar{T}$ is constructed as outlined in Theorem \ref{TU_3}.
\end{enumerate}
\end{proposition}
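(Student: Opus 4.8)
The plan is to reduce both parts to two generic facts about the augmented design and then invoke Theorem~\ref{TU_3}. Fix the hypothesis $(R,r)$. For Part~1 write $\bar X(X) = (X, e_+, e_-)$ with $X$ ranging over $\mathfrak{X}_0 \subseteq \Rel^{n\times k}$; for Part~2 write $\bar X(\tilde X) = (e_+, \tilde X, e_-)$ with $\tilde X$ ranging over $\tilde{\mathfrak{X}}_0$. In either case $\bar k + 1 \le k+3$, so the assumption $p \le n/(k+3)$ guarantees $p \le n/(\bar k + 1)$; hence $\kappa,\bar M, p$ satisfies Assumption~\ref{weightsPWRB} with $\bar k$ in place of $k$, and the apparatus of Lemmas~\ref{N*PW} and~\ref{nullN*PW} is available for the augmented problem. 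It therefore suffices to show that, off a $\lambda$-null set (in $\Rel^{n\times k}$ for Part~1, in $\Rel^{n\times(k-1)}$ for Part~2), (i) the relevant scenario of Theorem~\ref{TU_3} is in force, and (ii) $g^*_{\kappa,\bar M, p}(\,\cdot\,, \bar X, \bar R) \not\equiv 0$. Since the bad set is then a union of two null sets, Theorem~\ref{TU_3} yields Conclusions~(1)--(4) of Proposition~\ref{excPW} for $\bar W(C)$ at $\lambda$-almost every design.

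For~(i) I would dispose of the structural conditions one at a time. The requirements $e_+ \notin \lspan(X)$, $e_- \notin \lspan(X)$ and $\rank(X, e_+, e_-) = k+2$ (Part~1), and $e_- \notin \lspan(e_+,\tilde X)$ together with $\rank(e_+,\tilde X, e_-) = k+1$ (Part~2), each fail only on the zero set of a nontrivial minor, i.e.\ on a proper algebraic and hence $\lambda$-null subset; the dimensional hypotheses ensure $\bar k < n$. In Part~2 the remaining requirement $R\hat\beta_X(e_+) = 0$ holds identically: since $e_+$ is the first column of $X = (e_+,\tilde X)$ one has $\hat\beta_X(e_+) = e_1(k)$, and the assumption that the first column of $R$ vanishes gives $R e_1(k) = 0$. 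Thus Scenario~3 (Part~1), resp.\ Scenario~1 (Part~2), is in force for generic $X$, resp.\ $\tilde X$.

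The crux is~(ii). By Part~5 of Lemma~\ref{N*PW} the map $y \mapsto g^*_{\kappa,\bar M, p}(y, \bar X(X), \bar R)$ is, for each fixed free block $X$, a polynomial in $y$ whose coefficients are polynomials in the free entries of $X$; consequently $\{X : g^*_{\kappa,\bar M, p}(\,\cdot\,, \bar X(X),\bar R) \equiv 0\}$ is an algebraic subset of the space of free blocks. This subset is $\lambda$-null \emph{unless} it is the whole space, so it suffices to exhibit a \emph{single} admissible free block $X^*$ (i.e.\ one with $\rank(\bar X(X^*)) = \bar k$) and a single point $y^* \in \Rel^n$ with $y^* \notin N^*(\hat\Omega_{\kappa,\bar M, p, \bar X^*})$. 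By Part~4 of Lemma~\ref{N*PW} (applied with $\bar k$) this holds as soon as the estimator is well defined at $y^*$ --- the VAR($p$) least-squares fit and $\bar M(y^*)$ exist and the recoloring matrix $I_{\bar k} - \sum_{l=1}^{p} \hat A^{(p)}_l(y^*)$ is invertible --- and the residual matrix $\hat Z_{\bar X^*}(y^*) \in \Rel^{\bar k \times (n-p)}$ has full row rank $\bar k$; these force $\hat\Omega_{\kappa,\bar M, p, \bar X^*}(y^*)$ to be positive definite, whence $y^* \notin N^*$ and $g^*_{\kappa,\bar M, p}(\,\cdot\,, \bar X^*, \bar R) \not\equiv 0$.

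The main obstacle is thus the explicit construction of such a pair $(X^*, y^*)$, and it is genuinely harder here than in Lemma~\ref{nullN*PW}: the augmented design now carries \emph{two} rigidly structured fixed columns, the constant vector $e_+$ and the period-two vector $e_-$, and one must select the free columns and $y^*$ so that, after prewhitening, the low-period structure of these columns does not force a rank drop in $\hat Z_{\bar X^*}(y^*)$. I would build $X^*$ and $y^*$ blockwise, so that the rows of $\hat V_{\bar X^*}(y^*)$ decouple into pieces whose VAR residuals can be arranged to be linearly independent, extending the device used for the fixed intercept in the proof of Lemma~\ref{nullN*PW}(3). Accommodating both structured columns simultaneously is exactly what inflates the row budget to the stated $(k+3)(p^*+2) + p - 1 + \mathbf{1}_{\mathbb{M}_{AM}}(M) \le n$ (Part~1) and $(k+2)(p^*+2) + p - 1 + \mathbf{1}_{\mathbb{M}_{AM}}(M) \le n$ (Part~2), and what once more brings in the even order $p^* = p + (p \bmod 2)$; the term $\mathbf{1}_{\mathbb{M}_{AM}}(M)$ supplies the one extra observation needed for the auxiliary AR(1) fits in an $\mathbb{M}_{AM}$ bandwidth to be well defined. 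Once one such configuration is produced, the polynomial argument above upgrades it to the required $\lambda$-almost-everywhere statement, and Theorem~\ref{TU_3} completes the proof.
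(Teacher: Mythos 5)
Your reduction is exactly the one the paper uses: split the claim into (i) genericity of the scenario conditions (the rank and containment conditions fail only on zero sets of nontrivial polynomials, and the dimension bounds give $\bar{k}<n$), and (ii) genericity of $g^*_{\kappa,\bar{M},p}(\cdot,\bar{X},\bar{R})\not\equiv 0$, settled by noting that the coefficients of $y\mapsto g^*_{\kappa,\bar{M},p}(y,\bar{X}(X),\bar{R})$ are polynomials in the free entries of the design, so the bad set of designs is algebraic and a single witness pair $(X^*,y^*)$ with $g^*_{\kappa,\bar{M},p}(y^*,\bar{X}(X^*),\bar{R})\neq 0$ rules out that it is the whole space. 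Your observations that $p\le n/(k+3)$ makes Assumption \ref{weightsPWRB} hold for the augmented design, and that $R\hat{\beta}_X(e_+)=0$ holds identically in Part 2 because $\hat{\beta}_X(e_+)=e_1(k)$ and the first column of $R$ vanishes, also match the paper.

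However, there is a genuine gap: you never produce the witness pair, and that construction is the technical core of the proof --- everything else is soft. The paper builds it explicitly: it defines $w_1=(1,\ldots,1)'$, $w_2=(-1,-1,1,\ldots,1)'$ and $v(\delta)=(-\delta,\delta,-(k+1),1,\ldots,1)'$ in $\Rel^{k+4}$, completes them with $k$ further vectors orthogonal to $e_1(k+4)$, $e_2(k+4)$, $w_1$, $w_2$, $v(\delta)$, and embeds the resulting rows into $\Rel^n$ separated by zero blocks whose widths are tied to $p^*$ --- this embedding is where the row budget $(k+3)(p^*+2)+p-1+\mathbf{1}_{\mathbb{M}_{AM}}(M)\le n$ is actually consumed --- so that $y(\delta)\perp\lspan(\bar{X})$, hence $\hat{u}_{\bar{X}}(y(\delta))=y(\delta)$, and conditions (A1)--(A3) of Lemma \ref{AUXCONSTR} (applied with $t=k+3$, treating $\bar{X}$ as the design) can be verified. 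Without this, your step asserting that $\hat{Z}_{\bar{X}^*}(y^*)$ has full row rank and the estimator is well defined is a statement of what must be shown, not a proof of it. Moreover, your sketch does not engage with the one case where well-definedness of the bandwidth is genuinely delicate: for $M\in\mathbb{M}_{NW}$ the denominator $\sum_i w(i)\bar{\sigma}_i(y)$ can vanish, and the paper's construction is rigged so that $\bar{\omega}'\hat{Z}_{\bar{X}}(y(\delta))$ has exactly one $\delta$-dependent coordinate, equal to $\delta\sum_{i=1}^k\omega_i$, which makes that denominator a quadratic polynomial in $\delta$ with positive leading coefficient; one then chooses $\delta^*\neq 0$ off its finite root set. (For $\mathbb{M}_{AM}$ the extra observation granted by $\mathbf{1}_{\mathbb{M}_{AM}}(M)$ yields $n-j_{k+4}>p-1$, so condition (CAM) of Lemma \ref{AUXCONSTR} applies; for $\mathbb{M}_{KV}$ nothing is needed.) Your plan points in the right direction, but as written the proof is incomplete precisely at the step where all the stated dimensional constraints and the parity correction $p^*$ do their work.
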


\section{Conclusion}\label{concl}

We have shown that tests for \eqref{testing problem} based on prewhitened covariance estimators and possibly data-dependent bandwidth parameters break down in finite samples in terms of their size or power properties. This breakdown arises already for comparably simple covariance models such as $\mathfrak{C} = \mathfrak{C}_{AR(1)}$. We have also shown how a simple adjustment procedure can generically solve this problem in many cases. The test statistic obtained by applying the adjustment procedure is of the same structural form as the test statistic based on estimators suggested by \cite{A92} and \cite{NW94} and the test statistic in \cite{Rho2013}, but it is based on an artificial design matrix. Therefore, the adjustment procedure does not only lead to improved size and power properties, but is also convenient from a computational point of view. For the adjustment procedure to work, Assumption \ref{approxAAR(1)} has to be satisfied, which requires that elements of the covariance model $\mathfrak{C}$ that are close to being singular are well approximated by AR(1) correlation matrices. If and how the adjustment procedure can be extended to settings where this approximation condition is not satisfied is currently under investigation.

\newpage

\begin{large}
\begin{center}
\textsc{Appendices}
\end{center}
\end{large}

\bigskip

\appendix

\textbf{Additional notation:} For the sake of clarity we shall repeatedly stress the dependence of $\hat{V}$, $\hat{V}_1$, $\hat{V}_p$, $\hat{Z}$, $\hat{A}^{(p)}$, $\hat{u}$, $\hat{\Omega}_{\kappa, M, p}$ and $B_p$ on the design matrix $X$ by writing $\hat{V}_X$, $\hat{V}_{1, X}$, $\hat{V}_{p,X}$, $\hat{Z}_X$, $\hat{A}^{(p)}_X$, $\hat{u}_X$, $\hat{\Omega}_{\kappa, M, p, X}$ and $B_{p, X}$ in the following proofs. At various places we shall use the following notation: Given a matrix $M \in \Rel^{m_1 \times m_2}$ and indices $1\leq i \leq m_1$ and $1 \leq j \leq m_2$ we denote by $[M]_{ij} = M_{ij}$ the $ij$-th coordinate of $M$, by $[M]_{\cdot j} = M_{\cdot j}$ the $j-th$ column of $M$ and by $[M]_{i\cdot} = M_{i\cdot}$  the $i-th$ row of $M$. In case $m_2 = 1$ we write  $[M]_{i} = M_i$ instead of $[M]_{i1}$.

\section{Proofs of Results in Section \ref{struct}}\label{AA}

\begin{proof}[Proof of Lemma \ref{NPWRB}]
Since $X$ is a matrix of full column rank by assumption, we clearly have $\det(X'X) \neq 0$. From the definition of $\hat{\Omega}_{\kappa, M, p, X}$ we see that $y \in N(\hat{\Omega}_{\kappa, M, p, X})$, i.e., $\hat{\Omega}_{\kappa, M, p, X}(y)$ is not well defined, if and only if one of the following conditions is satisfied (cf. Remark \ref{WDPSI}): 
\begin{enumerate}
\item[(I)]  $\det\left(\hat{V}_{1, X}(y)\hat{V}'_{1, X}(y)\right) = 0$; 
\item[(II)] $\det\left(\hat{V}_{1, X}(y)\hat{V}'_{1, X}(y)\right) \neq 0$ and $\det\left(I_k-\sum_{l = 1}^p \hat{A}^{(p)}_{l, X}(y)\right) = 0$; 
\item[(III)] $\det\left(\hat{V}_{1, X}(y)\hat{V}'_{1, X}(y)\right) \neq 0$ and $\det\left(I_k-\sum_{l = 1}^p \hat{A}^{(p)}_{l, X}(y)\right) \neq 0$ and $M(y)$ is not well defined. 
\end{enumerate}
Using $\hat{u}_X(y) = (I- \det(X'X)^{-1} X' \adj(X'X) X') y$ we see that the coordinates of $\bar{V}_{1, X}(y) := \det(X'X) \hat{V}_{1, X}(y)$ and of $\bar{V}_{p, X}(y) := \det(X'X) \hat{V}_{p, X}(y)$ are values of certain multivariate polynomials defined on $\Rel^n \times \Rel^{n \times k}$ evaluated at the point $(y, X)$. Since (I) is equivalent to
\begin{equation}
\det(\det(X'X)^{2} \hat{V}_{1, X}(y)\hat{V}'_{1, X}(y)) = \det(\bar{V}_{1, X}(y)\bar{V}'_{1, X}(y)) = 0,
\end{equation}
this shows that (I) is equivalent to $g_1(y, X)=0$, say, where $g_1: \Rel^n \times \Rel^{n \times k} \rightarrow \Rel$ is a multivariate polynomial which is clearly independent of $(R,r)$. Using this equivalence, Condition (II) is seen to be equivalent to $g_1(y, X) \neq 0$ and $\det\left(I_k-\sum_{l = 1}^p \hat{A}^{(p)}_{l, X}(y)\right) = 0$. Because of $g_1(y, X) \neq 0$ we have
\begin{align}
I_k-\sum_{l = 1}^p \hat{A}^{(p)}_{l, X}(y) &= I_k - \hat{V}_{p, X}(y) \hat{V}'_{1, X}(y)\left(\hat{V}_{1, X}(y)\hat{V}'_{1, X}(y)\right)^{-1}  D(p), \\
&= I_k - \bar{V}_{p, X}(y) \bar{V}'_{1, X}(y)\left(\bar{V}_{1, X}(y)\bar{V}'_{1, X}(y)\right)^{-1}  D(p), \\
&= I_k - \det\left(\bar{V}_{1, X}(y)\bar{V}'_{1, X}(y)\right)^{-1} \bar{V}_{p, X}(y) \bar{V}'_{1, X}(y)\adj\left(\bar{V}_{1, X}(y)\bar{V}'_{1, X}(y)\right) D(p),
\end{align}
where $D(p) = (I_k, \hdots , I_k)' \in \Rel^{kp \times k}$. Using this together with similar arguments as above we see that pre-multiplying $I_k-\sum_{l = 1}^p \hat{A}^{(p)}_{l, X}(y)$ by $\det\left(\bar{V}_{1, X}(y)\bar{V}'_{1, X}(y)\right)$ results in a matrix, the entries of which are values of certain multivariate polynomials, defined on $\Rel^n \times \Rel^{n \times k}$, evaluated at the point $(y, X)$. It follows that the second equation in (II) can be replaced by 
\begin{equation}
g_{2}(y, X) := \left[\det\left(\bar{V}_{1, X}(y)\bar{V}'_{1, X}(y)\right) \right]^k \det\left(I_k-\sum_{l = 1}^p \hat{A}^{(p)}_{l, X}(y)\right) = 0,
\end{equation}
where $g_{2}: \Rel^{n} \times \Rel^{n \times k} \rightarrow \Rel$ is a multivariate polynomial which is independent of $(R,r)$ either. Summarizing our observations concerning (I) and (II) we see that
\begin{align}\label{NSET}
N(\hat{\Omega}_{\kappa, M, p, X}) &= \left\{y \in \Rel^n: g_1(y, X)g_2(y, X) = 0 \right\} \\[6pt] & \hspace{1.5cm} \cup \geschw{y \in \Rel^n: g_1(y, X)g_2(y, X) \neq 0 \text{ and } M(y) \text{ not w.d.}}.
\end{align}
The set in the second line of the previous display depends on the specific bandwidth $M$. Hence, we have to distinguish three cases: Suppose first that $M \equiv M_{KV} \in \mathbb{M}_{KV}$, i.e., $M$ is a constant which is functionally independent of $y$, $X$ and thus everywhere well defined on $\Rel^n$. Define $g_{\kappa, M_{KV}, p} \equiv g_1g_2$, so that $g_{\kappa, M_{KV}, p}: \Rel^n \times \Rel^{n \times k} \rightarrow \Rel$ is a multivariate polynomial. Noting that
\begin{equation}
N(\hat{\Omega}_{\kappa, M_{KV}, p, X}) = \geschw{y \in \Rel^n: g_{\kappa, M_{KV}, p}(y, X) = 0}
\end{equation}
then proves the statement in case $M \in \mathbb{M}_{KV}$, because $g_{\kappa, M_{KV}, p}$ is independent of $(R,r)$. Next we consider the case $M = M_{AM, \omega} \in \mathbb{M}_{AM}$, where we write $M_{AM, \omega}$ instead of $M_{AM, j, \omega, c}$, because the argument and the resulting function $g_{\kappa, M_{AM, \omega}, p}$ do not depend on $j$ and $c$. We partition 
\begin{equation}\label{MSET}
\geschw{y \in \Rel^n: g_1(y, X)g_2(y, X) \neq 0 \text{ and } M_{AM, \omega}(y) \text{ not w.d.}} = D_1 \cup D_2,
\end{equation}
where $D_1$ and $D_2$ are disjoint and defined as
\begin{align}
D_1 &= \geschw{y \in \Rel^n: g_1(y, X)g_2(y, X) \neq 0, \exists i^*: \hat{\rho}_{i^*}(y) \text{ not w.d. or } \hat{\rho}_{i^*}(y)^2 = 1}, \\
D_2 &= \geschw{y \in \Rel^n \backslash D_1:  g_1(y, X)g_2(y, X) \neq 0, \forall i \text{ s.t. } \omega_i \neq 0: \hat{\sigma}_i^2(y) = 0}. 
\end{align}
The equality in \eqref{MSET} is readily seen from the definition of $M_{AM, \omega}$. We want to obtain more suitable characterizations of $D_1$ and $D_2$ and proceed in two steps: (i) First, we claim that $y \in D_1$ if and only if 
\begin{equation} \label{A1}
g_1(y, X)g_2(y, X) \neq 0 \text{  and  } \prod_{i=1}^k \left( \left[\sum_{j = 2}^{n-p} [\hat{Z}_X(y)]_{ij} [\hat{Z}_X(y)]_{i(j-1)}\right]^2 - \left[ \sum_{j = 1}^{n-p-1} [\hat{Z}_X(y)]_{i j}^2\right]^2 \right) = 0.
\end{equation}
To see this assume that $g_1(y, X)g_2(y, X) \neq 0$ holds: Suppose that  $\hat{\rho}_{i^*}(y)$ is not well defined. The latter occurs if and only if $\sum_{j = 1}^{n-p-1} [\hat{Z}_X(y)]_{{i^*}j}^2=0$, i.e., all summands are zero, which immediately implies $\sum_{j = 2}^{n-p} [\hat{Z}_X(y)]_{{i^*}j} [\hat{Z}_X(y)]_{{i^*}(j-1)} = 0$. Therefore, the factor corresponding to index $i^*$ vanishes and thus the product defining the second equation in \eqref{A1} vanishes. That $\hat{\rho}^2_{i^*}(y) = 1$ implies that the product vanishes is obvious. To prove the other direction assume that $g_1(y, X)g_2(y, X) \neq 0$ and that the product vanishes. This implies that at least one factor with index $i^*$, say, equals zero, which implies that either $\hat{\rho}_{i^*}(y)$ is not well defined or $\hat{\rho}^2_{i^*}(y) = 1$ holds. This proves the claim. Secondly, we recall that if $g_1(y, X)g_2(y, X) \neq 0$, then $\hat{Z}_X(y) = \hat{V}_{p, X}(y) - \hat{A}^{(p)}_X(y) \hat{V}_{1, X}(y)$. Using an argument as above it is then easy to see that $ \hat{Z}_X(y)$ pre-multiplied by 
\begin{equation}\label{factor}
\det\left(\bar{V}_{1, X}(y)\bar{V}'_{1, X}(y)\right)\det(X'X)
\end{equation}
gives a matrix, the entries of which are values of certain multivariate polynomials defined on $\Rel^n \times \Rel^{n \times k}$ evaluated at the point $(y, X)$. Thus, if we multiply the second equation in \eqref{A1} by the $(4k)$-th power of the expression in the previous display we see that Equation \eqref{A1} can equivalently be written as
\begin{equation}
g_1(y, X)g_2(y, X) \neq 0 \text{  and  } g_{AM, 1}(y,X) = 0,
\end{equation}
where $g_{AM, 1}: \Rel^n \times \Rel^{n \times k} \rightarrow \Rel$ is a multivariate polynomial that is independent of $(R,r)$. Summarizing, we have shown that
\begin{equation}
D_1 = \geschw{y \in \Rel^n: g_1(y, X) g_2(y, X) \neq 0, g_{AM, 1}(y, X) = 0}.
\end{equation}
(ii) First we observe that $y \in D_2$ if and only if
\begin{equation}\label{A3}
g_1(y, X) g_2(y, X)g_{AM, 1}(y, X) \neq  0 \text{ and } \sum_{i=1}^k \omega_i
\sum_{j = 2}^{n-p}\left([\hat{Z}_X(y)]_{ij} - \hat{\rho}_i(y) [\hat{Z}_X(y)]_{i(j-1)}\right)^2 = 0,
\end{equation}
where we recall that by assumption $\omega$ is functionally independent of $y$ and $X$. Because $g_{AM,1}(y, X) \neq  0$ implies that $\hat{\rho}_i(y)$ is well defined for $i = 1, \hdots, k$, which is equivalent to $\sum_{j = 1}^{n-p-1} [\hat{Z}_X(y)]_{{i}j}^2\neq0$ for $i = 1, \hdots, k$, the second equation in the previous display can be replaced by
\begin{equation}
\sum_{i=1}^k \omega_i
\sum_{j = 2}^{n-p}\left(\left[\sum_{j = 1}^{n-p-1} [\hat{Z}_X(y)]_{{i}j}^2\right] [\hat{Z}_X(y)]_{ij} - \left[\sum_{j = 2}^{n-p} [\hat{Z}_X(y)]_{{i}j} [\hat{Z}_X(y)]_{{i}(j-1)} \right] [\hat{Z}_X(y)]_{i(j-1)}\right)^2 = 0.
\end{equation}
We now multiply the function defining this equation by the $6$-th power of the expression in Equation \eqref{factor} and denote the resulting function by $g_{AM, \omega,2}(y, X)$. The statement in Equation \eqref{A3} is then seen to be equivalent to 
\begin{equation}
g_1(y, X) g_2(y, X)g_{AM,1}(y, X) \neq  0 \text{ and } g_{AM, \omega, 2}(y, X) = 0,
\end{equation}
where $g_{AM, \omega, 2}: \Rel^n \times \Rel^{n \times k} \rightarrow \Rel$ is a multivariate polynomial. We also see that $g_{AM, \omega, 2}$ is independent of $(R,r)$. We conclude that 
\begin{equation}
D_2 = \geschw{y \in \Rel^n: g_1(y, X) g_2(y, X)g_{AM, 1}(y, X) \neq 0, g_{AM, \omega, 2}(y, X) = 0}.
\end{equation}
Now let $g_{\kappa, M_{AM, \omega},p} \equiv g_1g_2 g_{AM,1}g_{AM, \omega, 2}$. By what has been shown above $g_{\kappa, M_{AM, \omega},p}: \Rel^n \times \Rel^{n \times k} \rightarrow \Rel$ is a multivariate polynomial. Furthermore, $g_{\kappa, M_{AM, \omega},p}$ does not depend on $(R,r)$. We observe that
\begin{equation}
N(\hat{\Omega}_{\kappa, M_{AM, \omega}, p, X}) = \geschw{y \in \Rel^n: g_{\kappa, M_{AM, \omega}, p}(y,X) = 0}.
\end{equation}
This proves the lemma in case $M \in \mathbb{M}_{AM}$. Finally, we consider $M \equiv M_{NW,\omega, w} \in \mathbb{M}_{NW}$, where we write $M_{NW,\omega, w}$ instead of $M_{NW,\omega, w, \bar{c}}$, because the argument and the resulting polynomial are independent of $\bar{c}$. We use a similar argument as in the previous case. We observe that if $g_1(y, X)g_2(y, X) \neq 0$, then the function $M_{NW,\omega, w}$ is not well defined if and only if 
\begin{equation}\label{N1}
\sum_{i = -(n-p-1)}^{n-p-1} w(i) \bar{\sigma}_i(y) = 0,
\end{equation}
where
\begin{equation}
\bar{\sigma}_i(y) = (n-p)^{-1}\sum_{j=|i| + 1}^{n-p} \omega'[\hat{Z}_X(y)]_{\cdot j} \left([\hat{Z}_X(y)]_{\cdot (j-|i|)}\right)'\omega \text{   for   } |i| = 0, \hdots n-p-1.
\end{equation}
Since $\omega$ and $w$ are both functionally independent of $X$ and $y$, we can pre-multiply Equation \eqref{N1} by the square of the expression in Equation \eqref{factor} to see that the statement $g_1(y, X)g_2(y, X) \neq 0$ and $M_{NW, \omega, w}$ not being well defined is equivalent to 
\begin{equation}
g_1(y, X)g_2(y, X) \neq 0 \text{ and } g_{NW, \omega, w}(y, X) = 0,
\end{equation}
where $g_{NW, \omega, w}: \Rel^n \times \Rel^{n \times k} \rightarrow \Rel$ is a multivariate polynomial. The function $g_{NW, \omega, w}$ is independent of $(R,r)$. Using these properties, defining $g_{\kappa, M_{NW, \omega, w}, p} = g_1g_2g_{NW, \omega, w}$, a function which does not depend on $(R,r)$, and noting that
\begin{equation}
N(\hat{\Omega}_{\kappa, M_{NW,\omega, w}, p, X}) = \geschw{y \in \Rel^n: g_{\kappa, M_{NW, \omega, w}, p}(y,X) = 0},
\end{equation}
then proves the claim in case $M \in \mathbb{M}_{NW}$.
\end{proof}

\begin{proof}[Proof of Lemma \ref{N*PW}]
To establish Parts 1-4 of the lemma we apply a similar argument as in the proof of Lemma 3.1 in \cite{PP13}. We observe that if $y \notin N(\hat{\Omega}_{\kappa, M, p, X})$, or equivalently $g_{\kappa, M, p}(y, X) \neq 0$, we can write $\hat{\Omega}_{\kappa, M, p, X}(y)$ as
\begin{equation}\label{QREPr}
\hat{\Omega}_{\kappa, M, p, X}(y) = \frac{n}{n-p}B_{p,X}(y) \mathcal{W}_{n-p}(y) B'_{p,X}(y),
\end{equation}
where $\mathcal{W}_{n-p}(y) \in \Rel^{(n-p)\times (n-p)}$ is the symmetric Toeplitz matrix with ones on the main diagonal, and where for $i \neq j$ its $ij$-th coordinate is given by $\kappa((i-j)/M(y))$ whenever $M(y) \neq 0$, and by $0$ else. Recall that $M(y) \geq 0$. If $M(y) = 0$ we have $\mathcal{W}_{n-p}(y) = I_n$. If $M(y) > 0$ the matrix $\mathcal{W}_{n-p}(y)$ is positive definite by Assumption \ref{weightsPWRB}. Therefore, in both cases the matrix $\mathcal{W}_{n-p}(y)$ is positive definite. This immediately establishes Parts 1-4, where $\rank(R) = q$ is used in proving Part 4 (we emphasize that $\hat{\Omega}_{\kappa, M, p, X}(y)$ can be nonnegative definite, singular, zero or positive definite only if it is well defined, i.e., only if $g_{\kappa, M, p}(y, X) \neq 0$ holds). It remains to prove Part 5. We recall that
\begin{equation}\label{repN*}
N^*(\hat{\Omega}_{\kappa, M, p, X}) = N(\hat{\Omega}_{\kappa, M, p, X}) \cup \geschw{y \in \Rel^n \backslash N(\hat{\Omega}_{\kappa, M, p, X}): \det\left[\hat{\Omega}_{\kappa, M, p, X}(y)\right] = 0}.
\end{equation}
From Part 2 of the present lemma we know that we can rewrite the second set to the right as
\begin{equation}\label{secondN*PW}
\geschw{y \in \Rel^n \backslash N(\hat{\Omega}_{\kappa, M, p, X}): \det\left[B_{p,X}(y)B'_{p,X}(y)\right] = 0}.
\end{equation}
For every $y \in \Rel^n \backslash N(\hat{\Omega}_{\kappa, M, p, X})$ we have with $D(p) = (I_k, \hdots , I_k)' \in \Rel^{(kp) \times k}$ that (using the same notation as in the proof of Lemma \ref{NPWRB}) $B_{p,X}(y)$ can be written as
\begin{equation}
\begin{aligned}
& R(X'X)^{-1}\left(I_k - \bar{V}_{p, X}(y) \bar{V}'_{1, X}(y)\left[\bar{V}_{1, X}(y)\bar{V}'_{1, X}(y)\right]^{-1} D(p) \right)^{-1} \\ & \hspace{1cm}\times \left(\hat{V}_{p, X}(y) - \bar{V}_{p, X}(y) \bar{V}'_{1, X}(y)\left[\bar{V}_{1, X}(y)\bar{V}'_{1, X}(y)\right]^{-1} \hat{V}_{1, X}(y)\right) \\[7pt]
=& R(X'X)^{-1} \det(X'X)^{-1} \left(\det(\left[\bar{V}_{1, X}(y)\bar{V}'_{1, X}(y)\right]) I_k - \bar{V}_{p, X}(y) \bar{V}'_{1, X}(y)\adj \left[\bar{V}_{1, X}(y)\bar{V}'_{1, X}(y)\right] D(p) \right)^{-1} \\ 
& \hspace{1cm} \times \left(\det\left[\bar{V}_{1, X}(y)\bar{V}'_{1, X}(y)\right] \bar{V}_{p, X}(y) - \bar{V}_{p, X}(y) \bar{V}'_{1, X}(y)\adj\left[\bar{V}_{1, X}(y)\bar{V}'_{1, X}(y)\right] \bar{V}_{1, X}(y)\right) \\[7pt]
=& \det(X'X)^{-2} \det \left(\det(\left[\bar{V}_{1, X}(y)\bar{V}'_{1, X}(y)\right]) I_k - \bar{V}_{p, X}(y) \bar{V}'_{1, X}(y)\adj \left[\bar{V}_{1, X}(y)\bar{V}'_{1, X}(y)\right] D(p) \right)^{-1} \\ 
& \hspace{1cm}\times R\adj(X'X)  \adj\left(\det(\left[\bar{V}_{1, X}(y)\bar{V}'_{1, X}(y)\right]) I_k - \bar{V}_{p, X}(y) \bar{V}'_{1, X}(y)\adj \left[\bar{V}_{1, X}(y)\bar{V}'_{1, X}(y)\right] D(p) \right) \\ 
& \hspace{1cm}\times \left(\det\left[\bar{V}_{1, X}(y)\bar{V}'_{1, X}(y)\right] \bar{V}_{p, X}(y) - \bar{V}_{p, X}(y) \bar{V}'_{1, X}(y)\adj\left[\bar{V}_{1, X}(y)\bar{V}'_{1, X}(y)\right] \bar{V}_{1, X}(y)\right)
\end{aligned}
\end{equation}
We therefore see that the coordinates of the matrix $\bar{B}_{p,X}(y)$, say, which is obtained by pre-multiplying $B_{p,X}(y)$ by the factor
\begin{equation}
F_p(y, X) = \det(X'X)^{2} \det \left(\det(\left[\bar{V}_{1, X}(y)\bar{V}'_{1, X}(y)\right]) I_k - \bar{V}_{p, X}(y) \bar{V}'_{1, X}(y)\adj \left[\bar{V}_{1, X}(y)\bar{V}'_{1, X}(y)\right] D(p) \right) 
\end{equation}
(for later reference we note that $F_p: \Rel^n \times \Rel^{n \times k} \rightarrow \Rel$ is a multivariate polynomial), are values of certain multivariate polynomials defined on $\Rel^n \times \Rel^{n \times k}$  evaluated at $(y, X)$. Furthermore, we can replace ${B}_{p,X}(y)$ in Equation \eqref{secondN*PW} by $\bar{B}_{p,X}(y)$ without changing the set. This follows because $y \notin N(\hat{\Omega}_{\kappa, M, p})$ implies
\begin{equation}
F_p(y, X) = \det(X'X)^{2} \det(\bar{V}_{1, X}(y)\bar{V}'_{1, X}(y))^k \det \left(I_k - \sum_{l = 1}^p \hat{A}_{l,X}^{(p)}(y) \right) \neq 0.
\end{equation}
If we combine this equivalent expression for \eqref{secondN*PW} with \eqref{repN*} and Lemma \ref{NPWRB} we obtain
\begin{equation}
N^*(\hat{\Omega}_{\kappa, M, p, X}) = \geschw{y \in \Rel^n: g_{\kappa, M,p}(y, X) \det\left[\bar{B}_{p,X}(y)\bar{B}'_{p,X}(y)\right] = 0}.
\end{equation}
We next define $g^*_{\kappa, M,p}(y, X, R) \equiv g_{\kappa, M,p}(y, X) \det[\bar{B}_{p,X}(y)\bar{B}'_{p,X}(y)]$. By Lemma \ref{NPWRB} we see that $g^*_{\kappa, M,p}: \Rel^n \times \Rel^{n \times k} \times \Rel^{q \times k} \rightarrow \Rel$ is a multivariate polynomial that does not depend on $r$. 
\end{proof}

The subsequent technical lemma plays a key role in several constructions in the proofs of the genericity results. 

\begin{lemma}\label{AUXCONSTR}
Let $1 \leq k < n$, $n > 2$ and let $(R,r)$ be a hypothesis. Suppose that the triple $\kappa$, $M$, $p$ satisfies Assumption \ref{weightsPWRB}. Assume that the tuple $(y, X) \in \Rel^n \times \mathfrak{X}_0$ satisfies for some $t \geq k$:
\begin{enumerate}
  \item[(A1)] $\hat{V}_X(y)$ has exactly $t+1$ nonzero columns with indices $1 = j_1 < j_2 < \hdots < j_{t+1} \leq n$.  
  \item[(A2)] $j_{i+1} - j_{i} \geq p+1$ for $i = 1, \hdots, t$, and $n- j_{t+1} \geq p-1$.
  \item[(A3)] If $t = k$, then $\rank(\hat{V}_X(y)) = k$. Otherwise, 
	\begin{equation}
	  \lspan(\geschw{[\hat{V}_X(y)]_{\cdot j_i}: i = 1, \hdots, t}) = \lspan(\geschw{[\hat{V}_X(y)]_{\cdot j_i}: i = 2, \hdots, t+1}) = \Rel^k.
	\end{equation}
\end{enumerate}
Then, the following holds:
\begin{enumerate}
	\item $\hat{A}^{(p)}_{X}(y) = 0$.
	\item Under each of the following three conditions it follows that $g^*_{\kappa, M, p}(y,X, R) \neq 0$ (or equivalently $y \notin N^*(\hat{\Omega}_{\kappa, M, p, X})$):
\begin{enumerate}
	\item[(CKV)] $M \in \mathbb{M}_{KV}$; 
	\item[(CAM)] $M \in \mathbb{M}_{AM}$, and every row vector of the matrix obtained from $\hat{Z}_X(y)$ by deleting its last column is nonzero [this is in particular satisfied if $n-j_{t+1} > p-1$];
	\item[(CNW)] $M \in \mathbb{M}_{NW}$, and either each coordinate of $\omega' \hat{Z}_X(y)$ is non-negative, or each coordinate of $\omega' \hat{Z}_X(y)$ is non-positive.
\end{enumerate}
 \item For every $Q \in \Rel^{k \times k}$ such that $\rank(Q) = k$, the tuple $(y, XQ)$ is an element of $\Rel^n \times \mathfrak{X}_0$ that satisfies (A1), (A2) and (A3).
 \item If $k \geq 2$ and either $[\hat{V}_{X}(y)]_{{1j_{i}}}> 0$ for $i = 2, \hdots, t+1$ or $[\hat{V}_{X}(y)]_{{1j_{i}}}< 0$ for $i = 2, \hdots, t+1$ holds, then there exists a regular matrix $Q \in \Rel^{k\times k}$ such that the first columns of $X$ and $XQ$, respectively, coincide and such that $g^*_{\kappa, M, p}(y,XQ, R) \neq 0$ (or equivalently $y \notin N^*(\hat{\Omega}_{\kappa, M, p, XQ})$).
\end{enumerate}
\end{lemma}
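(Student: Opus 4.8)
The plan is to build everything on two elementary observations and then do case analysis. First, since OLS residuals depend only on $\lspan(X)$ and $\lspan(XQ)=\lspan(X)$ for regular $Q$, I would record the transformation rule $\hat{V}_{XQ}(y)=Q'\hat{V}_X(y)$, which follows from $\hat{u}_{XQ}(y)=\hat{u}_X(y)$ together with $\hat{V}=X'\diag(\hat{u})$. Because $Q'$ is a bijection it preserves the set of nonzero columns and their indices $j_1<\dots<j_{t+1}$, all the span conditions, and the rank of $\hat{V}_X(y)$; together with $\rank(XQ)=k$ this gives Part 3 at once. For Part 1 I would use that, by the spacing bound (A2), every block of $p$ consecutive indices contains at most one $j_i$. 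Writing $\hat{V}_{p,X}(y)\hat{V}_{1,X}'(y)=\sum_{m=p+1}^{n}\hat{V}_{\cdot m}(\hat{V}_{\cdot m-1}',\dots,\hat{V}_{\cdot m-p}')$, a summand can survive only when $m=j_i$, but then the lag window $\{j_i-p,\dots,j_i-1\}$ lies strictly above $j_{i-1}\le j_i-(p+1)$ and hence contains no nonzero column, so the factor $(\hat{V}_{\cdot m-1}',\dots,\hat{V}_{\cdot m-p}')$ vanishes. Thus $\hat{V}_{p,X}\hat{V}_{1,X}'=0$.

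To turn this into $\hat{A}^{(p)}_X(y)=\hat{V}_{p,X}\hat{V}_{1,X}'(\hat{V}_{1,X}\hat{V}_{1,X}')^{-1}=0$ I must still check that $\hat{V}_{1,X}(y)$ has full row rank $kp$, so the inverse exists. Here I would argue that, by the same spacing fact, each column of $\hat{V}_{1,X}$ has at most one nonzero $k$-block, so its column space is the internally orthogonal direct sum over the $p$ block positions of the spans of the admissible $\hat{V}_{\cdot j_i}$; full rank is then equivalent to each such span being all of $\Rel^k$. The bottom block receives the columns with $j_i\le n-p$ (at least $j_1,\dots,j_t$), the top block those with $j_i\ge p$ (at least $j_2,\dots,j_{t+1}$), and the intermediate blocks receive a superset of these. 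The two span conditions in (A3) (or, when $t=k$, $\rank(\hat{V}_X(y))=k$ together with the gap structure) then cover the boundary blocks. Hence $\rank(\hat{V}_{1,X})=kp$ and $\hat{A}^{(p)}_X(y)=0$; as a by-product $\hat{Z}_X(y)=\hat{V}_{p,X}(y)$ and $I_k-\sum_l\hat{A}^{(p)}_{l,X}=I_k$.

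For Part 2 I would invoke Part 4 of Lemma \ref{N*PW}: it suffices to verify $g_{\kappa,M,p}(y,X)\ne0$ and $\rank(\hat{Z}_X(y))=k$. The rank claim is immediate, since $\hat{Z}_X=\hat{V}_{p,X}$ retains exactly the columns $\hat{V}_{\cdot j_2},\dots,\hat{V}_{\cdot j_{t+1}}$, which span $\Rel^k$ by (A3). Well-definedness of $\hat{A}^{(p)}$ and invertibility of $I_k-\sum_l\hat{A}^{(p)}_{l,X}=I_k$ are in hand, so only well-definedness of $M(y)$ remains, and I would split into the three cases using the characterizations from the proof of Lemma \ref{NPWRB}. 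Under (CKV), $M$ is constant. Under (CAM), the spacing forces every row $\hat{Z}_{i\cdot}$ to have no two consecutive nonzero entries, so $\hat{\rho}_i(y)=0$ (whence $\hat{\rho}_i^2\ne1$), while the hypothesis that every row of $\hat{Z}_X$ with its last column deleted is nonzero makes each $\hat{\rho}_i$ well defined and each $\hat{\sigma}_i^2>0$ (using that the first column $\hat{V}_{\cdot p+1}$ of $\hat{Z}_X$ is itself zero). Under (CNW), with $z_j=\omega'\hat{Z}_{\cdot j}$, sign-definiteness of $(z_j)_j$ gives $\bar{\sigma}_i(y)\ge0$ for all $i$, and $\rank(\hat{Z}_X)=k$ with $\omega\ne0$ forces $\bar{\sigma}_0>0$, so $\sum_i w(i)\bar{\sigma}_i(y)\ge\bar{\sigma}_0>0$.

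Finally, for Part 4 I would reduce to Part 2 by producing a regular $Q$ with $Qe_1=e_1$ (which guarantees $(XQ)_{\cdot 1}=X_{\cdot 1}$) such that $(y,XQ)$ — which still satisfies (A1)--(A3) by Part 3 — meets the relevant one of (CKV)/(CAM)/(CNW). Applying Part 1 to $XQ$ yields $\hat{Z}_{XQ}=Q'\hat{V}_{p,X}$, hence $\omega'\hat{Z}_{XQ}=(Q\omega)'\hat{V}_{p,X}$, whose entries at the nonzero columns are $(Q\omega)'\hat{V}_{\cdot j_i}$ for $i=2,\dots,t+1$. I expect the main obstacle to be exactly this construction: one cannot send $\omega\mapsto e_1$, because keeping $Qe_1=e_1$ while collapsing $\omega$ onto $\lspan(e_1)$ destroys injectivity whenever $\omega\notin\lspan(e_1)$. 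The resolution, using $k\ge 2$, is to set $Q\omega=Ne_1+e_2$ for large $N>0$; then $\{e_1,Ne_1+e_2\}$ stays independent, a regular $Q$ fixing $e_1$ with this action exists, and $(Q\omega)'\hat{V}_{\cdot j_i}=N[\hat{V}_X]_{1j_i}+[\hat{V}_X]_{2j_i}>0$ for all the finitely many $i$ once $N$ is large, by the sign hypothesis $[\hat{V}_X]_{1j_i}>0$ (the case $<0$ is symmetric). This makes $\omega'\hat{Z}_{XQ}$ sign-definite, giving (CNW); for $M\in\mathbb{M}_{AM}$ the same $Q$ keeps every row of $\hat{Z}_{XQ}$ nonzero because the $\hat{V}_{\cdot j_i}$ span $\Rel^k$ and $Q$ is regular, and for $M\in\mathbb{M}_{KV}$ nothing beyond (A1)--(A3) is required. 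In each case Part 2 then delivers $g^*_{\kappa,M,p}(y,XQ,R)\ne0$.
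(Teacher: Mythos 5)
Your Parts 1--3 follow essentially the paper's route (the spacing argument giving $\hat{V}_{p,X}(y)\hat{V}_{1,X}'(y)=0$, a full-row-rank argument for $\hat{V}_{1,X}(y)$, reduction of Part 2 to Part 4 of Lemma \ref{N*PW} plus well-definedness of $M(y)$, and the rule $\hat{V}_{XQ}(y)=Q'\hat{V}_X(y)$ for Part 3), but there is a genuine gap exactly where you write ``when $t=k$, $\rank(\hat{V}_X(y))=k$ together with the gap structure'' covers the boundary blocks. Rank plus spacing does \emph{not} imply that the two boundary subsets $\geschw{v_{j_1},\hdots,v_{j_k}}$ and $\geschw{v_{j_2},\hdots,v_{j_{k+1}}}$ each span $\Rel^k$: abstractly, three nonzero columns $e_1(2),e_2(2),e_2(2)$ have rank $2$, yet dropping the first leaves a rank-one set, and the positions of the columns are irrelevant to this. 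What rescues the claim is the identity $\hat{V}_X(y)e_+=X'\hat{u}_X(y)=0$, i.e., the nonzero columns sum to zero, so $v_{j_1}=-\sum_{i=2}^{k+1}v_{j_i}$ and $v_{j_{k+1}}=-\sum_{i=1}^{k}v_{j_i}$, whence each boundary subset spans $\lspan(\hat{V}_X(y))=\Rel^k$. This is precisely the paper's opening step, and the omission matters: the same unproved span claim underlies your Part 2 assertion that $\rank(\hat{Z}_X(y))=k$ ``by (A3)'', and the case $t=k$ is the one actually used in the proofs of Lemma \ref{nullN*PW} and Proposition \ref{generic}.

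Part 4 is where you genuinely diverge from the paper, and your construction has two holes. First, a regular $Q$ with $Qe_1(k)=e_1(k)$ and $Q\omega=Ne_1(k)+e_2(k)$ exists only if $\omega\notin\lspan(e_1(k))$; if $\omega=ce_1(k)$ (an admissible weights vector) your $Q$ does not exist. That case is harmless --- then $\omega'\hat{Z}_X(y)$ is $c$ times the first row of $\hat{V}_{p,X}(y)$, which is sign-definite by hypothesis, so $Q=I_k$ already gives (CNW) --- but the proof must say so. Second, and more seriously, for $M\in\mathbb{M}_{AM}$ the statement you verify (every row of the full matrix $\hat{Z}_{XQ}(y)$ is nonzero, via spanning and regularity of $Q$) is not condition (CAM), which concerns $\hat{Z}_{XQ}(y)$ \emph{with its last column deleted}. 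The two differ exactly when $j_{t+1}=n$, which (A2) permits when $p=1$: then the deleted matrix's nonzero columns are only $Q'v_{j_2},\hdots,Q'v_{j_t}$, and when $t=k$ these are $k-1$ vectors, so they cannot span $\Rel^k$; for $k\geq 3$ your $Q$ constrains only $Qe_1(k)$ and $Q\omega$, and an unconstrained basis-completion column $Qe_l(k)$ orthogonal to $\lspan(v_{j_2},\hdots,v_{j_t})$ makes row $l$ of the deleted matrix vanish. The paper's choice $Q(\gamma)$ --- add $\gamma$ times the first column of $X$ to every other column, $\gamma$ large --- sidesteps both issues simultaneously: it makes every nonzero column of $\hat{Z}_{XQ(\gamma)}(y)$ entrywise positive (negative), so the non-last column $Q(\gamma)'v_{j_2}$ alone certifies (CAM), and nonnegativity of $\omega\neq 0$ gives (CNW) for every admissible $\omega$. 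Your approach can be repaired (prescribe the remaining columns of $Q$ to avoid the proper subspace $\lspan(\geschw{v_{j_i}:2\leq i\leq t})^{\bot}$), but as written the argument for (CAM) proves the wrong statement.
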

\begin{proof}
Denote the column vectors of $\hat{V}_X(y)$ by $v_i$ for $i = 1, \hdots, n$. If $t > k$, then by (A3) the set $\geschw{v_{j_1}, \hdots, v_{j_t}}$ and $\geschw{v_{j_2}, \hdots, v_{j_{t+1}}}$, respectively, spans $\Rel^k$. Using (A3), we now show that this is automatically satisfied in case $t = k$. To prove this claim, we first recall that $v_j = [\hat{u}_X(y)]_j X_{j \cdot}'$. We see that $\hat{u}_X(y) \bot \lspan(X)$ implies
\begin{equation}
0 = \sum_{j = 1}^n [\hat{u}_{X}(y)]_j X_{j \cdot}' = \sum_{j = 1}^n v_j = \sum_{i = 1}^{k+1} v_{j_i},
\end{equation}
where the third equality follows from (A1) ($t = k$). This shows that
\begin{equation}
\begin{aligned}
&v_{j_1} &= & ~~~~ -\sum_{i = 2}^{k+1} v_{j_i} \\
&v_{j_{k+1}} &= & ~~~~ -\sum_{i = 1}^{k} v_{j_i} .
\end{aligned}
\end{equation}
By (A3) $\rank(\hat{V}_X(y)) = k$ , which together with (A1) implies that $\lspan(\geschw{v_{j_i}: i = 1, \hdots, k+1}) = \Rel^k$. Therefore, it follows from the two equations in the previous display that $\geschw{v_{j_i}: i = 1, \hdots, k}$ and $\geschw{v_{j_i}: i = 2, \hdots, k+1}$, respectively, spans $\Rel^k$. Hence the claim follows. We next show that $\hat{A}^{(p)}_{X}(y)$ is well defined. For this we have to verify that $\rank(\hat{V}_{1, X}(y)) = kp$ (cf. Remark \ref{WDPSI}). The $j$-th column ($j = 1, \hdots, n-p$) of $\hat{V}_{1, X}(y)$ is given by
\begin{equation}\label{Vcol}
(v'_{j+p-1}, \hdots, v'_{j+1}, v'_j)' \in \Rel^{kp},
\end{equation}
which is to be interpreted as $v_j$ if $p = 1$, as $(v'_{j+1}, v'_j)'$ if $p = 2$ etc. For $l = 1, \hdots, p$ we define the $(kp) \times k$ dimensional auxiliary matrix $D_l = e_{l}(p) \otimes I_k$, where $e_{l}(p)$ denotes the $l$-th element of the canonical basis of $\Rel^p$ (and $\otimes$ denotes the Kronecker product). The following claims are immediate consequences of the structure of $\hat{V}_X(y)$ implied by (A1) - (A2): 
\begin{enumerate}
	\item[(I)] $D_p v_{j_1}$ is the first column of $\hat{V}_{1, X}(y)$;
	\item[(II)]If $t \geq 2$, then $D_l v_{j_i}$ for $i = 2, \hdots, t$ and $l = 1, \hdots, p$ are columns of $\hat{V}_{1, X}(y)$;
	\item[(III)] If $p\geq 2$, then $D_l v_{j_{t+1}}$ for $l = 1, \hdots, (p-1)$ are columns of $\hat{V}_{1, X}(y)$.
\end{enumerate}%
To see Parts (I) and (II), we observe that (A1) and (A2) imply that for $i = 1, \hdots, t$, there are at least $p$ zero columns between the columns $v_{j_i}$ and $v_{j_{i+1}}$ of $\hat{V}_X(y)$. Equation \eqref{Vcol} together with $j_1 = 1$ then immediately implies Parts (I) and (II). Now we consider Part (III) and hence assume that $p \geq 2$. We start with the case $l = (p-1)$. Every column of $\hat{V}_X(y)$ with index greater than $j_{t+1}$ is zero by Assumption (A1). By Assumption (A2) we have $n - j_{t+1} \geq p-1$. Together, this implies that the column $v_{j_{t+1}}$ is followed by at least $p-1$ zero columns. Since $j_{t+1} - j_{t} \geq p+1$ by Assumption (A2), the column $v_{j_{t+1}}$ is preceded by at least $p$ zero columns. The assumption $n - j_{t+1} \geq p-1$ is equivalent to $n - p \geq j_{t+1} -1$. Hence, denoting the $m_1 \times m_2$-dimensional zero matrix by $0_{m1, m2}$, we can use Equation \eqref{Vcol} with $j = j_{t+1} -1$ to see that 
\begin{equation}
(v'_{j_{t+1}+p-2}, \hdots, v'_{j_{t+1}}, v'_{j_{t+1} -1})' = \begin{cases} (0_{1,(p-2)k}, v'_{j_{t+1}}, 0_{1,k})' & \text{ if } p > 2 \\
(v'_{j_{t+1}}, 0_{1,k})' & \text{ if } p = 2,
\end{cases}
\end{equation}
is a column of $\hat{V}_{1, X}(y)$, where in deriving the equality we made use of the already established fact that the column  $v_{j_{t+1}}$ of $\hat{V}_X(y)$ is preceded by at least $p > 1$ zero columns (which implies that $v'_{j_{t+1} -1}$ is the zero vector), and  followed by at least $p-1$ zero columns (which is used in case $p > 2$). This proves the statement concerning $D_{p-1} v_{j_{t+1}}$. In case $p = 2$ we are done. If $p > 2$, then the statements concerning $D_l v_{j_{t+1}}$ for $l = 1, \hdots, p-2$ follow from Equation \eqref{Vcol} together with the equation in the previous display and the fact that $v_{j_{t+1}}$ is preceded by at least $p$ zero columns. 

We now use (I)-(III) together with $\lspan(\geschw{v_{j_1}, \hdots, v_{j_t}}) = \lspan(\geschw{v_{j_2}, \hdots, v_{j_{t+1}}}) = \Rel^k$ to show that $\rank(\hat{V}_{1, X}(y)) = kp$: The matrix $\hat{V}_{1, X}(y)$ is $kp \times (n-p)$ dimensional. Therefore, we must show that it has full row rank. Assume existence of a row vector $\xi = (\xi_1, \hdots, \xi_p)$, where $\xi_i' \in \Rel^k$ for $i = 1,\hdots, p$, such that $\xi \hat{V}_{1, X}(y) = 0$ holds. Part (I) shows that $0 = \xi D_p v_{j_1} = \xi_p v_{j_1}$. If $t \geq 2$, then Part (II) applied with $l = p$ shows that $0 = \xi D_pv_{j_i} = \xi_p v_{j_i}$ for $i = 2, \hdots, t$. Summarizing the cases $t = 1$ and $t \geq 2$ we obtain $\xi_p v_{j_i} = 0$ for $i = 1, \hdots, t$. Because $\geschw{v_{j_1}, \hdots, v_{j_t}}$ spans $\Rel^k$, it follows that $\xi_p = 0$. If $p \geq 2$, Part (III) implies that $0 = \xi D_l v_{j_{t+1}} = \xi_l v_{j_{t+1}}$ for $l = 1, \hdots, (p-1)$. If $t \geq 2$ Part (II) implies $0 = \xi D_l v_{j_i} = \xi_l v_{j_i}$ for $l = 1, \hdots, (p-1)$ and $i = 2, \hdots, t$. Summarizing again the cases $t = 1$ and $t \geq 2$ we obtain that for $l = 1, \hdots, (p-1)$ we have
\begin{equation}
\xi_l v_{j_i} = 0 \text{ for } i = 2, \hdots, t+1.
\end{equation}
Because $\geschw{v_{j_2}, \hdots, v_{j_{t+1}}}$ spans $\Rel^k$, it follows from the previous display that $\xi_l = 0$ for $l = 1, \hdots, p-1$. Since we already know that $\xi_p = 0$, we obtain $\xi = 0$ and thus $\rank(\hat{V}_{1, X}(y)) = kp$. Therefore $\hat{A}^{(p)}_X(y)$ is well defined. To see that $\hat{A}^{(p)}_X(y) = 0$ we observe that every nonzero column of $\hat{V}_X(y)$ besides the first one is preceded by at least $p$ zero columns. The matrix $\hat{V}_{p, X}(y)$ is obtained from $\hat{V}_X(y)$ by deleting the first $p \geq 1$ columns. This together with Equation \eqref{Vcol} immediately implies $\hat{V}_{p,X}(y) \hat{V}_{1,X}'(y) = 0$ and thus $\hat{A}^{(p)}_X(y) = 0$. 

To show that $y \notin N^*(\hat{\Omega}_{\kappa, M, p, X})$ under the conditions (CKV), (CAM) and (CNW), respectively, we first note that $\rank(\hat{Z}_X(y)) = k$. This follows, because $\hat{A}^{(p)}_X(y) = 0$ implies $\hat{Z}_X(y) = \hat{V}_{p, X}(y)$, which together with $j_2-j_1 \geq p+1$ shows that the vectors $v_{j_i}$ for $i = 2, \hdots, t+1$ (which span $\Rel^k$) are column vectors of $\hat{Z}_X(y)$. As a consequence of Part 4 of Lemma \ref{N*PW} positive definiteness of $\hat{\Omega}_{\kappa, M, p, X}(y)$ and hence $y \notin N^*(\hat{\Omega}_{\kappa, M, p, X})$ follows if we can show that $\hat{\Omega}_{\kappa, M, p, X}(y)$ is well defined. Since $\hat{A}^{(p)}_X(y) = 0$ implies invertibility of $I_k - \sum_{l = 1}^p \hat{A}_{i, X}^{(p)}(y)$, it remains to show that $M$ is well defined at $y$ (cf. Remark \ref{WDPSI}). This is trivially satisfied under Condition (CKV) because $M \in \mathbb{M}_{KV}$ is everywhere well defined. Suppose that Condition (CAM) holds, i.e., $M \in \mathbb{M}_{AM}$ and every row vector of the matrix obtained from $\hat{Z}_X(y)$ by deleting the last column is nonzero. That the latter condition is satisfied if $n-j_{t+1} > p-1$ follows because in that case the last column of $\hat{Z}_X(y)$ is the zero vector and $\rank(\hat{Z}_X(y)) = k$. Under the assumption that every row vector of the matrix obtained from $\hat{Z}_X(y)$ by deleting the last column is nonzero it is obvious that the denominators in the definition of $\hat{\rho}_i(y)$ for $i = 1, \hdots, k$, i.e., 
\begin{equation}
\sum_{j = 1}^{n-p-1} [\hat{Z}_X(y)]_{ij}^2 ~~ \text{ for } ~~ i = 1, \hdots, k,
\end{equation}
do not vanish. Therefore, $\hat{\rho}_i(y)$ for $i = 1, \hdots, k$ are well defined. Using Assumptions (A1) and (A2) together with $p \geq 1$ and $\hat{Z}_X(y) = \hat{V}_{p, X}(y)$, it follows that there is always at least one zero column between two nonzero columns of $\hat{Z}_X(y)$. Therefore, it is clear that the numerators appearing in the definition of $\hat{\rho}_i(y)$ for $i = 1, \hdots, k$, i.e., 
\begin{equation}
\sum_{j = 2}^{n-p} [\hat{Z}_X(y)]_{ij}[\hat{Z}_X(y)]_{i(j-1)} ~~ \text{ for } ~~ i = 1, \hdots, k,
\end{equation}
must vanish. It follows that $\hat{\rho}_i(y) = 0$ for $i = 1, \hdots, k$. We finally show that $\hat{\sigma}_i(y) > 0$ for $i = 1, \hdots, k$. We note that $\hat{\rho}_i(y) = 0$ for $i = 1, \hdots, k$ implies
\begin{equation}
\hat{\sigma}_i(y) = (n-p-1)^{-1}\sum_{j = 2}^{n-p}[\hat{Z}_X(y)]^2_{ij} \text{ for } i = 1, \hdots, k,
\end{equation}
Because of $\hat{Z}_X(y) = \hat{V}_{p, X}(y)$ it follows from Assumptions (A1) and (A2) that the first column of $\hat{Z}_X(y)$ must be zero. Furthermore, we already know that $\rank(\hat{Z}_X(y)) = k$. This implies that the matrix $Z_*$, say, which is obtained from $\hat{Z}_X(y)$ by deleting the first column, must be of full row rank $k$. Consequently all rows of $Z_*$ must be non-zero. The previous display shows that $\hat{\sigma}_i(y)$ for $i = 1, \hdots, k$ is, up to a positive factor, the squared Euclidean norm of the $i$-th row of $Z_*$. Therefore $\hat{\sigma}_i(y) > 0$ for $i = 1, \hdots, k$ must hold. Therefore, we have shown that $M(y)$ is well defined (we even see that $M(y) = 0$ holds). Now we consider the case where Condition (CNW) holds, i.e., $M \in \mathbb{M}_{NW}$ and every coordinate of $\omega' \hat{Z}_X(y)$ is non-negative (non-positive). We have to show that 
\begin{equation}
\sum_{i = -(n-p-1)}^{n-p-1} w(i) \bar{\sigma}_i(y) \neq 0.
\end{equation}
The non-negativity (non-positivity) condition immediately implies $\bar{\sigma}_i(y) \geq 0$ for $|i| = 0, 1, \hdots, n-p-1$. Furthermore, since $\rank(\hat{Z}_X(y)) = k$ and $\omega \neq 0$, by the definition of the weights vector, we have $\omega' \hat{Z}_X(y) \neq 0$. This implies $\bar{\sigma}_0(y) = (n-p)^{-1} \norm{\omega' \hat{Z}_X(y)}^2 > 0$. By assumption $w(0) = 1$ and $w(i) \geq 0$ for $|i| = 1, \hdots, n-p-1$. Therefore, the quantity in the previous display does not vanish and thus $M(y)$ is well defined. This proves the second part of the lemma.

We next prove Part 3. Let $Q$ be a regular $k \times k$ dimensional matrix. First, we obviously have $X Q \in \mathfrak{X}_0$, because $X \in \mathfrak{X}_0$ and $Q$ is regular. Secondly, since $\lspan(X) = \lspan(XQ)$, using regularity of $Q$, we have that $\hat{u}_{X}(y) = \hat{u}_{XQ}(y)$. This immediately entails 
\begin{equation}
\hat{V}_{XQ}(y) = (XQ)'\diag(\hat{u}_{XQ}(y)) = (XQ)'\diag(\hat{u}_{X}(y)) = Q' X' \diag(\hat{u}_{X}(y)) = Q'\hat{V}_{X}(y).
\end{equation}
Therefore, the tuple $(y, XQ) \in \Rel^n \times \mathfrak{X}_0$ satisfies (A1), (A2) and (A3), because $(y, X)$ does so and $Q$ is regular.

It remains to prove Part 4. We do this by constructing a $Q$ as in Part 3 such that the tuple $(y, XQ) \in \Rel^n \times \mathfrak{X}_0$ satisfies Condition (CKV), (CAM) or (CNW), respectively, if $M \in \mathbb{M}_{KV}$, $M \in \mathbb{M}_{AM}$ or $M \in \mathbb{M}_{NW}$, respectively. If $M \in \mathbb{M}_{KV}$ we can obviously choose $Q = I_k$. Suppose that $M \notin \mathbb{M}_{KV}$. Let $Q \in \Rel^{k \times k}$ be such that $\rank(Q) = k$. We specify this matrix later on. From Part 2 of the present lemma we see that the tuple $(y, XQ)$ satisfies (A1), (A2) and (A3) and therefore we can conclude from Part 1 of the present lemma that $\hat{A}^{(p)}_{XQ}(y) = 0$, which implies $\hat{Z}_{XQ}(y) = \hat{V}_{p, XQ}(y)$. Together with the equation in the previous display, we see that
\begin{equation} \label{ZT}
\hat{Z}_{XQ}(y) = \hat{V}_{p, XQ}(y) = Q'\hat{V}_{p, X}(y).
\end{equation}
We now want to choose $Q \in \Rel^{k \times k}$ (regular) such that 
\begin{enumerate}
	\item[(i)] every row vector of the matrix obtained from $\hat{Z}_{XQ}(y)$ by deleting the last column is nonzero, and
	\item[(ii)] either every coordinate of $\omega' \hat{Z}_{XQ}(y)$ is non-negative, or every coordinate of $\omega' \hat{Z}_{XQ}(y)$ is non-positive,
\end{enumerate}
holds, and that additionally the first columns of $X$ and $XQ$, respectively, coincide. By assumption we either have $[\hat{V}_{X}(y)]_{{1j_{i}}}> 0$ for $i = 2, \hdots, t+1$, or we have $[\hat{V}_{X}(y)]_{{1j_{i}}}< 0$ for $i = 2, \hdots, t+1$. Consider the former (latter) case: Let 
\begin{equation}
Q(\gamma) = \begin{pmatrix}
1 & \gamma & \gamma & \gamma & \hdots & \gamma\\
0 & 1 & 0 & 0 & \hdots & 0 \\
0 & 0 & 1 & 0 & \hdots & 0\\
\vdots & \vdots & \vdots & \vdots& \vdots & \vdots \\
0 & 0 & 0 &0 &\hdots & 1
\end{pmatrix},
\end{equation}
which is a regular matrix for every $\gamma \in \Rel$. Post-multiplying $X$ by $Q(\gamma)$ has the same effect as adding $\gamma$-times the first column of $X$ to all other columns, without changing the first column. We observe that since $\hat{V}_{p,X}(y)$ is obtained from $\hat{V}_X(y)$ by deleting the first $p$ columns, the nonzero columns of $\hat{V}_{p, X}(y)$ are precisely the vectors $[\hat{V}_{X}(y)]_{{\cdot j_{i}}}$ for $i = 2, \hdots, t+1$ because $(y, X)$ satisfies Assumptions (A1) and (A2). Therefore, it is obvious from Equation \eqref{ZT}, together with the assumed $[\hat{V}_{X}(y)]_{{1j_{i}}}> 0$ for $i = 2, \hdots, t+1$  ($[\hat{V}_{X}(y)]_{{1j_{i}}}<  0$ for $i = 2, \hdots, t+1$), that by choosing $\gamma^* > 0$ large enough, we can enforce that all nonzero columns of $\hat{Z}_{XQ(\gamma^*)}(y)$ are coordinate-wise positive (negative). Consider (i): Using Equation \eqref{ZT} we see that $Q(\gamma^*)' [\hat{V}_X(y)]_{\cdot j_2}$ is a column of the matrix obtained by deleting the last column of $\hat{Z}_{XQ(\gamma^*)}(y)$. This follows from  $j_2 \geq p+2$ (a consequence of the assumptions $j_2 - j_1 \geq p+1$ and $j_1 = 1$), which shows that $Q(\gamma^*)' [\hat{V}_X(y)]_{\cdot j_2}$ is a column of $\hat{Z}_{XQ(\gamma^*)}(y)$, together with $j_2 < j_3 \leq n$ (a consequence of $k \geq 2$), which shows that it is not the last column of $\hat{Z}_{XQ(\gamma^*)}(y)$. Since all coordinates of $Q(\gamma^*)' [\hat{V}_X(y)]_{\cdot j_2}$ are positive (negative) by construction of $Q(\gamma^*)$, it follows that $Q(\gamma^*)$ satisfies (i) above. To show (ii) we recall that $\omega$ is nonzero and coordinate-wise nonnegative. Since all nonzero columns of $\hat{Z}_{XQ(\gamma^*)}(y)$ are coordinate-wise positive (negative), it immediately follows that every coordinate of $\omega' \hat{Z}_{XQ(\gamma^*)}(y)$ is non-negative (non-positive). By construction the first columns of $X$ and $XQ(\gamma^*)$ coincide. This proves the claim.
\end{proof}

\begin{proof}[Proof of Lemma \ref{nullN*PW}]
We start with the first part. Let $X \in \mathfrak{X}_0 \subseteq \Rel^{n \times k}$ and $y \in \Rel^n$ be arbitrary but fixed. We show that $y \in N^{*}(\hat{\Omega}_{\kappa, M, p, X})$, which is equivalent to $g_{\kappa, M, p}^*(y, X, R) = 0$ by Part 5 of Lemma \ref{N*PW}. If $y \in N(\hat{\Omega}_{\kappa, M, p, X}) \subseteq N^{*}(\hat{\Omega}_{\kappa, M, p, X})$ we are done. Suppose $y \notin N(\hat{\Omega}_{\kappa, M, p, X})$ which is equivalent to $g_{\kappa, M, p}(y, X) \neq 0$ by Lemma \ref{NPWRB}. We claim that $\rank(\hat{Z}_X(y)) < k$ must hold. Assuming this claim and using $\rank(R) = q = k$, $X \in \mathfrak{X}_0$ which implies $\rank(X) = k$, and $y \notin N(\hat{\Omega}_{\kappa, M, p, X})$ which implies $\rank(I_k - \sum_{l = 1}^p \hat{A}_l^{(p)}(y)) = k$, it then follows from the definition of $B_{p, X}(y)$ in Equation \eqref{defB} that $\rank(B_{p, X}(y)) < q$. As a consequence, Part 2 of Lemma \ref{N*PW} then shows that $\hat{\Omega}_{\kappa, M, p, X}(y)$ is singular, which implies $y \in N^{*}(\hat{\Omega}_{\kappa, M, p, X})$. To prove $\rank(\hat{Z}_X(y)) < k$ we note that
\begin{equation}
\hat{Z}_X(y) = \hat{V}_{p, X}(y) \left[I_{n-p} - \hat{V}'_{1, X}(y)\left(\hat{V}_{1, X}(y)\hat{V}'_{1, X}(y)\right)^{-1} \hat{V}_{1, X}(y) \right] = \hat{V}_{p, X}(y)\Pi_{\lspan(\hat{V}'_{1, X}(y))^{\bot}}.
\end{equation}
We see from the previous display that $\rank(\hat{Z}_X(y)) = k$, i.e., $\hat{Z}_X(y)$ having full row rank, is equivalent to $\rank(\hat{V}_{p, X}(y)) = k$ and $\lspan(\hat{V}'_{1, X}(y)) \cap \lspan(\hat{V}'_{p, X}(y)) = \geschw{0}$. Using $\rank(\hat{V}_{1, X}(y)) = kp$, a consequence of $y \notin N(\hat{\Omega}_{\kappa, M, p, X})$ (cf. Remark \ref{WDPSI}), this implies
\begin{equation}
\rank\left((\hat{V}'_{1, X}(y):\hat{V}'_{p, X}(y))\right) = (p+1)k.
\end{equation}
But this is impossible, because the matrix $(\hat{V}'_{1, X}(y):\hat{V}'_{p, X}(y))$ is $(n-p) \times ((p+1)k)$ dimensional, which together with $n < (p+1)k + p$ implies $\rank(\hat{V}'_{1, X}(y):\hat{V}'_{p, X}(y)) \leq n-p < (p+1)k$. 

Next, we prove Part 2 of the lemma. Under the present assumptions it is shown in Part 1 of Proposition \ref{generic} that for $\lambda_{\Rel^{n \times k}}$ almost every $X \in \mathfrak{X}_0$ we have $g^*_{\kappa, M, p}(e_+, X, R) \neq 0$ and therefore in particular $g^*_{\kappa, M, p}(., X, R) \not \equiv 0$. This proves the first statement. To show the remaining statement we construct a $y$ such that $g^*_{\kappa, M, p}(y, e_+, R) \neq 0$. Note first that  $2p+1+ \mathbf{1}_{\mathbb{M}_{AM}}(M) \leq n$ obviously implies $2p+1 \leq n$. Let $y \in \Rel^n$ satisfy $y_1 = -1$, $y_{p+2} = 1$ and $y_i = 0$ else. [Note that this is feasible, i.e., $p+2 \leq n$ holds, because of $2p+1 \leq n$ and $p \geq 1$.] We intend to apply Part 2 of Lemma \ref{AUXCONSTR} with $t = k = 1$ to the tuple $(y, e_+)$. We first have to show that the tuple $(y, e_+)$, which is clearly an element of $\Rel^n \times \mathfrak{X}_0$, satisfies Assumptions (A1), (A2) and (A3). For this we observe that $e_+ \bot y$, which implies $\hat{u}_{e_{+}}(y) = y$ and therefore
\begin{equation}
\hat{V}_{e_+}(y) = \hat{u}'_{e_{+}}(y) = y'.
\end{equation}
Hence (A1) is satisfied, because $y_1 = -1 \neq 0$ and $y$ has only two nonzero coordinates. The corresponding indices are $j_1 = 1$ and $j_{t+1} = p+2$. The first part of Assumption (A2) is therefore obviously satisfied. The second part, i.e., $n- j_{t+1} = n- (p+2) \geq p-1$, follows immediately from $2p+1 \leq n$. Assumption (A3) follows from $y \neq 0$ together with the previous display and $t = k$. Therefore, $\hat{Z}_{e_+}(y) = \hat{V}_{p, e_+}(y) = (0, 1, 0, \hdots 0) \in \Rel^{n-p}$ follows as an application of Part 1 of Lemma \ref{AUXCONSTR}. Obviously (CKV) holds if $M \in \mathbb{M}_{KV}$. Since $\hat{Z}_{e_+}(y) =  (0, 1, 0, \hdots 0)$ it is also obvious that (CNW) holds if $M \in \mathbb{M}_{NW}$. Suppose now that $M \in \mathbb{M}_{AM}$ holds. In this case $\mathbf{1}_{\mathbb{M}_{AM}}(M) = 1$ and therefore $2(p+1) \leq n$ holds. The latter implies $n-(p+2) = n-j_{t+1} > p-1$. Consequently the statement in brackets in (CAM) shows that the condition is satisfied. 

It remains to prove Part 3. Under the present assumptions it is shown in Part 2 of Proposition \ref{generic} that for $\lambda_{\Rel^{n \times (k-1)}}$ almost every $\tilde{X} \in \mathfrak{\tilde{X}}_0$ we have $g^*_{\kappa, M, p}(e_-, (e_+, \tilde{X}), R) \neq 0$ and therefore $g^*_{\kappa, M, p}(., (e_+, \tilde{X}), R) \not \equiv 0$. 
\end{proof}

\section{Proofs of Results in Section \ref{neg}} \label{Aneg}

For a definition of the group $G\left(\mathfrak{M}_{0}\right)$ appearing in the following lemma we refer the reader to \cite{PP13} Section 5.1.

\begin{lemma}\label{A567PW}
Assume that the triple $\kappa$, $M$, $p$ satisfies Assumption \ref{weightsPWRB}. Assume further that $g^*_{\kappa, M, p}(., X, R) \not \equiv 0$. Then, $\hat{\beta}$ and $\hat{\Omega}_{\kappa, M, p}$ satisfy Assumptions 5, 6, and 7 in \cite{PP13} with $N=N(\hat{\Omega}_{\kappa, M, p})$. In fact, $\hat{\Omega}_{\kappa, M, p} \left( y\right) $ is nonnegative definite for every $y \in \mathbb{R}^{n}\backslash N(\hat{\Omega}_{\kappa, M, p})$, and is positive definite $\lambda_{\mathbb{R}^{n}}$-almost everywhere. The test statistic $T$ defined in Equation \eqref{tslrv}, with $\hat{\Omega} = \hat{\Omega}_{\kappa, M, p}$, is invariant under the group $G\left(\mathfrak{M}_{0}\right)$ and the rejection probabilities $P_{\mu ,\sigma ^{2}\Sigma }(T\geq C)$ depend on $\left( \mu ,\sigma ^{2},\Sigma \right) \in \mathfrak{M}\times (0,\infty )\times \mathfrak{C}$ only through $\left( \left( R\beta -r\right)/\sigma ,\Sigma \right)$ (in fact, only through $\left( \left\langle \left(R\beta -r\right) /\sigma \right\rangle ,\Sigma \right)$), where $\beta $ corresponds to $\mu $ via $\mu =X\beta $.
\end{lemma}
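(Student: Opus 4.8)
The plan is to reduce everything to the high-level framework of \cite{PP13} by verifying the requisite equivariance and homogeneity properties of $\hat{\beta}$ and $\hat{\Omega}_{\kappa, M, p, X}$, and then to read off the invariance of $T$ and the resulting reduction of the rejection probabilities. Assumptions 5, 6, and 7 of \cite{PP13} concern, respectively, the affine equivariance of the location estimator, the translation- and scale-equivariance together with measurability and nonnegative definiteness of the covariance estimator off a thin exceptional set, and its positive definiteness $\lambda_{\Rel^n}$-almost everywhere. The first is recorded directly from $\hat{\beta}_X(y)=(X'X)^{-1}X'y$: for every $\gamma \in \Rel$ and $\tau \in \Rel^k$ one has $\hat{\beta}_X(\gamma y + X\tau)=\gamma\hat{\beta}_X(y)+\tau$, and correspondingly $\hat{u}_X(\gamma y + X\tau)=\gamma\hat{u}_X(y)$, since adding an element of $\lspan(X)$ leaves the residual unchanged.

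The core of the argument is to propagate this homogeneity through the three-step construction of $\hat{\Omega}_{\kappa, M, p, X}$ in Section \ref{tpwc}. I would track the scaling degree of each quantity under $y \mapsto \gamma y + X\tau$ with $\gamma \neq 0$: from $\hat{u}_X(\gamma y + X\tau)=\gamma \hat{u}_X(y)$ it follows that $\hat{V}_X$ is homogeneous of degree one, hence so are $\hat{V}_{1,X}$ and $\hat{V}_{p,X}$; consequently the VAR--OLS coefficient $\hat{A}^{(p)}_X$ is invariant (the two factors of $\gamma$ cancel in $\hat{V}_{p,X}\hat{V}'_{1,X}(\hat{V}_{1,X}\hat{V}'_{1,X})^{-1}$), and therefore $\hat{Z}_X$ is homogeneous of degree one. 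The crucial and most delicate point is the scale invariance of the bandwidth $M$: for $M \in \mathbb{M}_{KV}$ this is trivial; for $M \in \mathbb{M}_{AM}$ one checks that $\hat{\rho}_i$ and the ratios $\hat{\alpha}_j$ are invariant (in each the powers of $\gamma$ cancel between numerator and denominator); and for $M \in \mathbb{M}_{NW}$ that the ratio defining $M_{NW,\omega,w,\bar{c}}$ is invariant because $\bar{\sigma}_i=\omega'\check{\Gamma}_i\omega$ is homogeneous of degree two in $\hat{Z}_X$. Granting this, $\check{\Gamma}_i$ and $\check{\Psi}$ are homogeneous of degree two, the recoloring matrices in Step 3 are invariant, and hence $\hat{\Psi}$ and $\hat{\Omega}_{\kappa, M, p, X}$ are homogeneous of degree two; in particular $\hat{\Omega}_{\kappa, M, p, X}(\gamma y + X\tau)=\gamma^2\hat{\Omega}_{\kappa, M, p, X}(y)$. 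Since each step is a rational operation in the data, the set $N(\hat{\Omega}_{\kappa, M, p, X})$ where the construction fails, and likewise $N^*(\hat{\Omega}_{\kappa, M, p, X})$, are invariant under these transformations; this yields Assumption 6. Nonnegative definiteness on $\Rel^n \setminus N(\hat{\Omega}_{\kappa, M, p, X})$ is exactly Part 1 of Lemma \ref{N*PW}, and positive definiteness $\lambda_{\Rel^n}$-almost everywhere follows because $g^*_{\kappa, M, p}(., X, R) \not\equiv 0$ forces $N^*(\hat{\Omega}_{\kappa, M, p, X})$ to be a $\lambda_{\Rel^n}$-null set (the Okamoto-type dichotomy applied to Part 5 of Lemma \ref{N*PW}), off which $\hat{\Omega}_{\kappa, M, p, X}$ is invertible and nonnegative definite, hence positive definite; this gives Assumption 7.

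With the equivariance of $\hat{\beta}$ and the degree-two homogeneity of $\hat{\Omega}_{\kappa, M, p, X}$ in hand, invariance of $T$ under $G(\mathfrak{M}_0)$ is immediate: for a generator $g : y \mapsto \gamma(y-\mu_0^*)+\mu_0^*+v$ with $\gamma \neq 0$, $\mu_0^* \in \mathfrak{M}_0$ and $v$ in the tangent space $\geschw{X\beta : R\beta = 0}$, the shift $(1-\gamma)\mu_0^*+v$ lies in $\lspan(X)$, so a short computation gives $R\hat{\beta}(g(y))-r=\gamma(R\hat{\beta}(y)-r)$, while $\hat{\Omega}_{\kappa, M, p, X}(g(y))=\gamma^2\hat{\Omega}_{\kappa, M, p, X}(y)$ and $N^*$ is preserved; the factors $\gamma^2$ and $\gamma^{-2}$ cancel in the quadratic form \eqref{tslrv}, yielding $T(g(y))=T(y)$. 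The statement about rejection probabilities is then the standard invariance argument: $g$ maps $P_{\mu,\sigma^2\Sigma}$ to $P_{g(\mu),\gamma^2\sigma^2\Sigma}$, so invariance of $T$ shows $P_{\mu,\sigma^2\Sigma}(T \geq C)$ is constant along orbits, and computing the induced action on $(\mu,\sigma^2,\Sigma)$ identifies $((R\beta-r)/\sigma,\Sigma)$ as a maximal invariant under the component $\gamma > 0$. The further reduction to $\langle (R\beta-r)/\sigma\rangle$ comes from the reflection $y \mapsto 2\mu_0^*-y$ (the case $\gamma = -1$, $v = 0$), which under the symmetric Gaussian law sends $P_{\mu,\sigma^2\Sigma}$ to $P_{2\mu_0^*-\mu,\sigma^2\Sigma}$, flipping the sign of $R\beta-r$ while leaving $T$ and $\Sigma$ fixed.

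I expect the main obstacle to be precisely the scale invariance of the data-dependent bandwidth $M$ together with the homogeneity bookkeeping through the prewhitening and recoloring steps: one must confirm that every quotient defining $M \in \mathbb{M}_{AM} \cup \mathbb{M}_{NW}$ is genuinely invariant and that the cancellation of the $\gamma$-powers is exact throughout the nonlinear construction, and one must align these facts carefully with the formal statements of Assumptions 5--7 in \cite{PP13}.
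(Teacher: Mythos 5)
Your overall strategy coincides with the paper's: verify the assumptions of \cite{PP13} by propagating the action $y \mapsto \gamma y + X\tau$ through Steps 1--3 of the construction, use Lemma \ref{N*PW} for the definiteness claims, and invoke the algebraic-set dichotomy to make $N^*(\hat{\Omega}_{\kappa,M,p})$ a $\lambda_{\Rel^n}$-null set. Your equivariance bookkeeping (degree one for $\hat{V}_X$, $\hat{V}_{1,X}$, $\hat{V}_{p,X}$, $\hat{Z}_X$; invariance of $\hat{A}^{(p)}_X$ and of all three bandwidth classes; degree two for $\hat{\Psi}$ and $\hat{\Omega}_{\kappa,M,p}$) is correct, and your direct derivation of the $G(\mathfrak{M}_0)$-invariance of $T$ and of the reduction to $\langle (R\beta-r)/\sigma\rangle$ is a sound unwinding of what the paper obtains by citing Lemma 5.15 and Proposition 5.4 of \cite{PP13}.

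There is, however, a genuine gap: you read the covariance-estimator assumption of \cite{PP13} as requiring only ``measurability'' of $\hat{\Omega}$ off a thin exceptional set, but Part (i) of Assumption 5 there requires $\hat{\Omega}$ to be well defined, symmetric and \emph{continuous} on $\Rel^n \backslash N$ with $N$ a closed $\lambda_{\Rel^n}$-null set, and your proposal never verifies continuity of $\hat{\Omega}_{\kappa,M,p}$. This is not a formality to be ``aligned'' at the end; it is the one step where the analytic conditions on the kernel in Assumption \ref{weightsPWRB} (continuity of $\kappa$ and $\lim_{x\to\infty}\kappa(x)=0$) actually enter, and the paper spends roughly half of its proof on it. Concretely, writing $\hat{\Omega}_{\kappa,M,p}(y) = \frac{n}{n-p}B_p(y)\mathcal{W}_{n-p}(y)B_p'(y)$ as in Equation \eqref{QREPr}, one must show that $B_p(\cdot)$, $M(\cdot)$ (in the $\mathbb{M}_{AM}$ and $\mathbb{M}_{NW}$ cases), and each off-diagonal entry of the Toeplitz weight matrix are continuous on $\Rel^n \backslash N(\hat{\Omega}_{\kappa,M,p})$. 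The delicate point is an observation $y$ with $M(y)=0$: there the entry is \emph{defined} to be $0$ by convention, and continuity holds only because $\kappa(i/M(y_m)) \rightarrow 0$ along any sequence $y_m \rightarrow y$ with $M(y_m) \rightarrow 0^+$, which is exactly the decay condition $\lim_{x\to\infty}\kappa(x)=0$. A kernel violating that condition would destroy the conclusion of the lemma (and the downstream concentration arguments of \cite{PP13}, which need $T$ continuous off $N^*$), even though every equivariance computation in your proposal would go through unchanged. Closing this gap requires the continuity argument just described, together with the remark that $N(\hat{\Omega}_{\kappa,M,p})$ is closed because it is algebraic (Lemma \ref{NPWRB}).
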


\begin{proof}[Proof of Lemma \ref{A567PW}]
The assumption $g^*_{\kappa, M, p}(., X, R) \not \equiv 0$ together with Part 5 of Lemma \ref{N*PW} implies that the algebraic set $N^*(\hat{\Omega}_{\kappa, M, p})$ is a closed $\lambda_{\Rel^n}$-null set. Therefore, by Lemma \ref{NPWRB}, it follows that the algebraic set $N(\hat{\Omega}_{\kappa, M, p}) \subseteq N^*(\hat{\Omega}_{\kappa, M, p})$ is a closed $\lambda_{\Rel^n}$-null set as well. We claim that $\hat{\Omega}_{\kappa, M, p}$ is continuous (and obviously well defined by definition) on $\Rel^n \backslash N(\hat{\Omega}_{\kappa, M, p})$, because it can be written as a composition of continuous functions on this set: We recall from Equation \eqref{QREPr} the representation 
\begin{equation}
\hat{\Omega}_{\kappa, M, p}(y) = \frac{n}{n-p} B_p(y)\mathcal{W}_{n-p}(y) B_p'(y) \text{ for every } y \in \Rel^n \backslash N(\hat{\Omega}_{\kappa, M, p}).
\end{equation}
We first observe that $B_p(.)$ (which was defined in Equation \eqref{defB}) is continuous on $\Rel^n \backslash N(\hat{\Omega}_{\kappa, M, p})$, because $\hat{V}(.)$ and hence $\hat{A}^{(p)}(.)$ and $\hat{Z}(.)$ are continuous on $\Rel^n \backslash N(\hat{\Omega}_{\kappa, M, p})$. By considering each of the cases $M \in \mathbb{M}_{KV}$, $M \in \mathbb{M}_{NW}$ and $M \in \mathbb{M}_{AM}$ separately, it is easy to see that $M(.)$ is continuous on $\Rel^n \backslash N(\hat{\Omega}_{\kappa, M, p})$. The main diagonal entries of $\mathcal{W}_n(.)$ are by definition constant on $\Rel^n \backslash N(\hat{\Omega}_{\kappa, M, p})$. Therefore, it remains to show that all off-diagonal entries are continuous on $\Rel^n \backslash N(\hat{\Omega}_{\kappa, M, p})$. Each of them is of the form $\kappa(i/M(y))$ for some fixed $|i| = 1, \hdots, n-p-1$ if $M(y) \neq 0$, and $0$ if $M(y) = 0$. Since $\kappa$ is a continuous function by Assumption \ref{weightsPWRB}, and $M(.)$ is continuous on $\Rel^n \backslash N(\hat{\Omega}_{\kappa, M, p})$ and satisfies $M(y) \geq 0$, it remains to check that $\kappa(x) \rightarrow 0$ as $|x| \rightarrow \infty$, which is a part of Assumption \ref{weightsPWRB}. This proves the claim. Since $\hat{\beta}$ is well defined and continuous everywhere on $\Rel^n$, it follows that both $\hat{\beta}$ and $\hat{\Omega}_{\kappa, M, p}$ are well-defined and continuous on $\Rel^n \backslash N(\hat{\Omega}_{\kappa, M, p})$. Clearly, $\hat{\Omega}_{\kappa, M, p}$ is symmetric on $\Rel^n \backslash N(\hat{\Omega}_{\kappa, M, p})$. This proves Part (i) of Assumption 5 in \cite{PP13}. To prove the second part let $y \in \Rel^n \backslash N(\hat{\Omega}_{\kappa, M, p})$, $\alpha \neq 0$ and $\gamma \in \Rel^k$. We have to show that $\alpha y + X \gamma \in \Rel^n \backslash N(\hat{\Omega}_{\kappa, M, p})$. Note that $\hat{V}(\alpha y + X\gamma) = X'\diag(\hat{u}(\alpha y + X\gamma)) = \alpha \hat{V}(\hat{u}(y))$, which implies $\hat{A}^{(p)}(\alpha y + X \gamma) = \hat{A}^{(p)}(\hat{u}(y))$ and $\hat{Z}(\alpha y + X \gamma) = \alpha \hat{Z}(y)$. The latter immediately leads (considering each of the cases $M \in \mathbb{M}_{KV}$, $M \in \mathbb{M}_{NW}$ and $M \in \mathbb{M}_{AM}$ separately) to $M(\alpha y + X \gamma) = M(y)$, which in turn implies $\mathcal{W}_{n-p}(\alpha y + X \gamma) = \mathcal{W}_{n-p}(y)$. It then follows from the previous display and the definition of $B_p(y)$ that $\hat{\Omega}_{\kappa, M, p}(\alpha y + X\gamma) =  \alpha^2 \hat{\Omega}_{\kappa, M, p}(y)$. Therefore, we clearly have $\alpha y + X\gamma \in \Rel^n \backslash N(\hat{\Omega}_{\kappa, M, p})$, which proves Part (ii) of Assumption 5 in \cite{PP13}, and where we have also established the equivariance property of $\hat{\Omega}_{\kappa, M, p}$ required in Part (iii) of Assumption 5 in \cite{PP13}. That $\hat{\beta}$ satisfies the equivariance property in Part (iii) of Assumption 5 in \cite{PP13} is obvious. It remains to show that $\hat{\Omega}_{\kappa, M, p}$ is $\lambda_{\Rel^n}$-almost everywhere nonsingular on $\Rel^n \backslash N(\hat{\Omega}_{\kappa, M, p})$. This is equivalent to 
\begin{equation}
\geschw{y \in \Rel^n \backslash N(\hat{\Omega}_{\kappa, M, p}): \det(\hat{\Omega}_{\kappa, M, p}(y)) = 0} = N^*(\hat{\Omega}_{\kappa, M, p}) \backslash N(\hat{\Omega}_{\kappa, M, p})
\end{equation}
being a $\lambda_{\Rel^n}$-null set. This is obvious, since we have already observed that $N^*(\hat{\Omega}_{\kappa, M, p})$ is a $\lambda_{\Rel^n}$-null set under the maintained assumptions. This proves the claim concerning Assumption 5. That $\hat{\Omega}_{\kappa, M, p}(y)$ is nonnegative definite for every $y \notin N(\hat{\Omega}_{\kappa, M, p})$ (which is equivalent to $g_{\kappa, M, p}(y, X) \neq 0$ by Lemma \ref{NPWRB}) has been shown in Part 1 of Lemma \ref{N*PW}. It follows that $\hat{\Omega}_{\kappa, M, p}$ is positive definite on the complement of $N^*(\hat{\Omega}_{\kappa, M, p})$. Hence $\hat{\Omega}_{\kappa, M, p}$ is $\lambda_{\Rel^n}$- almost everywhere positive definite. This immediately shows that Assumptions 6 and 7 in \cite{PP13} are satisfied. The remaining two claims in the lemma now follow immediately from what has been established together with Lemma 5.15 Part 3 and Proposition 5.4 in \cite{PP13}.
\end{proof}
\begin{proof}[Proof of Theorem \ref{thmlrvPW}]
In each part of the theorem we have $g^*_{\kappa, M, p}(., X, R) \not \equiv 0$. In Part 4 this is an explicit assumption. In the other parts this is implied by the assumption that $g^*_{\kappa, M, p}(., X, R) $ does not vanish at a specific point. As a consequence Lemma \ref{A567PW} is applicable in all parts of the theorem. We shall now apply the first two parts of Corollary 5.17 in \cite{PP13} to prove the first two parts of the present theorem. Lemma \ref{A567PW} shows that $\hat{\beta}$ and $\hat{\Omega}_{\kappa, M, p}$ satisfy Assumption 5 in \cite{PP13} with $N = N(\hat{\Omega}_{\kappa, M, p})$. Furthermore, note that the set $N^*$ figuring Corollary 5.17 of \cite{PP13} coincides with $N^*(\hat{\Omega}_{\kappa, M, p})$. By Assumption \ref{AAR(1)} the spaces $\mathcal{Z}_{+}=\text{span}(e_{+})$ and $\mathcal{Z}_{-}=\text{span}(e_{-})$ are concentration spaces of $\mathfrak{C}$ (cf. Lemma G.1 in \cite{PP13}). Hence, Parts 1 and 2 of the present theorem now follow by applying the first two parts of Corollary 5.17 in \cite{PP13} and Remark 5.18(i) in \cite{PP13} to $\mathcal{Z}_{+}$ as well as to $\mathcal{Z}_{-}$, and by noting that Part 5 of Lemma \ref{N*PW} shows that the statement $e_{+} \in \mathbb{R}^{n}\backslash N^*(\hat{\Omega}_{\kappa, M, p})$ translates into $g_{\kappa, M, p}^*(e_{+}, X, R) \neq 0$, with a similar translation if $e_{+} $ is replaced by $e_{-}$. That the test is biased in Part 2 of the theorem follows immediately from Part 5 of Lemma 5.15 in \cite{PP13} (note that Assumptions 5 and 6 in \cite{PP13} are satisfied by Lemma \ref{A567PW}) showing that $W(C)$ contains a non-empty open set. 
To prove Part 4 we apply Theorem 5.19 in \cite{PP13}. Lemma \ref{A567PW} shows that $\hat{\beta}$ and $\hat{\Omega}_{\kappa, M, p}$ also satisfy Assumption 7 in \cite{PP13}. We consider the case where $e_+ \in \mathfrak{M}$ and $R\hat{\beta}(e_+) \neq 0$. The other case can be handled analogously. From Remark 5.20 in \cite{PP13} we see that all conditions on the covariance model in Theorem 5.19 in \cite{PP13} are satisfied with $\bar{\Sigma} = e_+ e_+'$, $\lspan(\bar{\Sigma}) = \lspan(e_+)$ and $Z = e_+$. Clearly, $\lspan(\bar{\Sigma}) = \lspan(e_+) \subseteq \mathfrak{M}$ and $R\hat{\beta}(z) \neq 0$ holds $\lambda_{\lspan(\bar{\Sigma}))}$-a.e. This shows that Equation (33) in \cite{PP13} holds in the present setup. To conclude, it remains to observe that $K_2$ in this equation equals one. This follows from the discussion preceding Theorem 5.19 in \cite{PP13}, because $\hat{\Omega}_{\kappa, M, p}$ is almost everywhere positive definite by Lemma \ref{A567PW}.

\smallskip

Now we consider Part 3 of the theorem. We prove the case where $g^*_{\kappa,M, p}(e_{+}, X, R) \neq 0$, $T(e_{+}+\mu_{0}^{\ast }) = C$ and $\grad T(e_{+} + \mu_0^{\ast})$ exists for some $\mu _{0}^{\ast }\in \mathfrak{M}_{0}$. The other case works analogously. The statement in the theorem saying that if $\grad T(e_{+} + \mu_0^{\ast})$ exists and $T(e_{+}+\mu_{0}^{\ast }) = C$ holds for \textit{some} $\mu _{0}^{\ast }\in \mathfrak{M}_{0}$, then $\grad T(e_{+} + \mu_0^{\ast})$ exists and $T(e_{+}+\mu_{0}^{\ast }) = C$ holds for \textit{all} $\mu _{0}^{\ast }\in \mathfrak{M}_{0}$ follows at once from invariance of $T$ w.r.t. $G(\mathfrak{M}_0)$, which holds as a consequence of Lemma \ref{A567PW}. In a first step we now show that the linear functional on $\Rel^n$ corresponding to the row vector $\grad T (\mu_0^{\ast} + e_{+})$ does not vanish everywhere on $\lspan(e_{+})^{\bot}$: Arguing by contradiction, assume that $\grad T (\mu_0^{\ast} + e_{+})w = 0$ for every $w \in \lspan(e_{+})^{\bot}$, which is equivalent to $\grad T (\mu_0^{\ast} + e_{+})' \bot \lspan(e_{+})^{\bot}$ and therefore $\grad T (\mu_0^{\ast} + e_{+})' \in \lspan(e_+)$ holds, i.e., $\grad T (\mu_0^{\ast} + e_{+}) = ce_+'$ for some $c \in \Rel$. Since $T$ is $G(\mathfrak{M}_0)$ invariant, it holds for every $\gamma \neq 0$ that
\begin{equation}
T(\gamma e_+ + \mu_0^*) = T(\gamma(e_+ + \mu_0^* - \mu_0^*) + \mu_0^*)  = T(e_+ + \mu_0^*) = C.
\end{equation}
Hence on the set $\Rel \backslash \geschw{-1}$ the mapping
\begin{equation}
\alpha \mapsto T(e_+ + \mu_0^* + \alpha e_+) = T((1+\alpha)e_+ + \mu_0^*) =  C
\end{equation}
is constant, thus showing that the directional derivative of $T$ at the point $e_+ + \mu_0^*$ in direction $e_+$ is zero. The latter is equivalent to $\grad T (\mu_0^{\ast} + e_{+}) e_+ = c \norm{e_+}^2 = 0$, and hence $c = 0$ holds which implies $\grad T (\mu_0^{\ast} + e_{+}) = 0$. To arrive at a contradiction it remains to show that there is a vector $v$ such that the directional derivative of $T$ at $e_+ + \mu_0$ in direction $v$ does not vanish. To this end, recall that Assumption 5 in \cite{PP13} is satisfied, hence the discussion following that Assumption in \cite{PP13} shows that $N^*(\hat{\Omega}_{\kappa, M, p})$ is invariant w.r.t. $G(\mathfrak{M})$. Therefore, $e_+ \notin N^*(\hat{\Omega}_{\kappa, M, p})$ implies $e_{+} + \mu_0^* \notin N^*(\hat{\Omega}_{\kappa, M, p})$. Since $N^*(\hat{\Omega}_{\kappa, M, p})$ is closed by Lemma \ref{N*PW}, there exists an open ball $U_{\varepsilon}$ of radius $\varepsilon > 0$ centered at $e_{+} + \mu_0^*$ such that $U_{\varepsilon} \subseteq \Rel^n \backslash N^*(\hat{\Omega}_{\kappa, M, p})$. Additionally, we note that $e_{+} \notin \mathfrak{M}$, because $\mathfrak{M} \subseteq N^*(\hat{\Omega}_{\kappa, M, p})$ always holds (see the discussion in \cite{PP13} after Assumption 5). Therefore, $v = \Pi_{\mathfrak{M}^{\bot}} e_{+}/\norm{\Pi_{\mathfrak{M}^{\bot}} e_{+}}$ is well defined and for $0 \leq |\alpha| < \varepsilon$ we have $e_{+} + \mu_0^* + \alpha v \in U_{\varepsilon}$. Assume that $0 \leq |\alpha| < \varepsilon$. The OLS estimator $\hat{\beta}$ clearly satisfies
\begin{equation}
R\hat{\beta}(e_{+} + \mu_0^* + \alpha v) = R\hat{\beta}(e_{+} + \mu_0^*)  + \alpha R\hat{\beta}(v) =  R\hat{\beta}(e_{+} + \mu_0^*),
\end{equation}
where the second equality follows from $v \bot \mathfrak{M}$. Since $\hat{\Omega}_{\kappa, M, p}$ satisfies the equivariance condition in Assumption 5 of \cite{PP13} we can furthermore write
\begin{align}
\hat{\Omega}_{\kappa, M, p}(e_{+} + \mu_0^* + \alpha v) &= \hat{\Omega}_{\kappa, M, p}(e_{+} + \alpha v) \\
&= \hat{\Omega}_{\kappa, M, p}(e_+ - \Pi_{\mathfrak{M}}e_+ + \alpha v ) \\
&= \hat{\Omega}_{\kappa, M, p}(\Pi_{\mathfrak{M}^{\bot}}e_+ + \alpha \Pi_{\mathfrak{M}^{\bot}} e_{+}/\norm{\Pi_{\mathfrak{M}^{\bot}} e_{+}} ) \\
&= \hat{\Omega}_{\kappa, M, p}\left((1 + \frac{\alpha}{\norm{\Pi_{\mathfrak{M}^{\bot}} e_{+}}}) \Pi_{\mathfrak{M}^{\bot}}e_+ \right) \\
&= (1 + \frac{\alpha}{\norm{\Pi_{\mathfrak{M}^{\bot}} e_{+}}})^2 \hat{\Omega}_{\kappa, M, p}\left( \Pi_{\mathfrak{M}^{\bot}}e_+ \right) \\
&= (1 + \frac{\alpha}{\norm{\Pi_{\mathfrak{M}^{\bot}} e_{+}}})^2 \hat{\Omega}_{\kappa, M, p}\left( e_+ + \mu_0^* \right).
\end{align}
By definition of $T$ (cf. Equation \eqref{tslrv} and recall that $e_{+} + \mu_{0}^{\ast } + \alpha v \in U_{\varepsilon} \subseteq \Rel^n \backslash N^*(\hat{\Omega}_{\kappa, M, p})$) and the assumed equality $T(e_{+}+\mu_{0}^{\ast }) = C$, the relations derived above allow us to show that
\begin{align}
T(e_{+} + \mu_{0}^{\ast } + \alpha v) &= \left( R\hat{\beta}(e_{+} + \mu_{0}^{\ast } + \alpha v) - r\right)' \hat{\Omega}^{-1}_{\kappa, M, p}(e_{+} + \mu_{0}^{\ast } + \alpha v) \left( R\hat{\beta}(e_{+} + \mu_{0}^{\ast } + \alpha v) - r\right) \\
&= (1 + \frac{\alpha}{\norm{\Pi_{\mathfrak{M}^{\bot}} e_{+}}})^{-2} \left( R\hat{\beta}(e_{+} + \mu_{0}^{\ast }) - r\right)' \hat{\Omega}^{-1}_{\kappa, M, p}(e_{+} + \mu_0^*) \left( R\hat{\beta}(e_{+} + \mu_{0}^{\ast}) - r \right) \\
&= (1 + \frac{\alpha}{\norm{\Pi_{\mathfrak{M}^{\bot}} e_{+}}})^{-2} T(e_{+} + \mu_{0}^{\ast}) \\
&= (1 + \frac{\alpha}{\norm{\Pi_{\mathfrak{M}^{\bot}} e_{+}}})^{-2} C
\end{align} 
holds for every $0 \leq |\alpha| < \varepsilon$. This implies that the directional derivative of $T$ in direction $v$ at the point $e_{+} + \mu_{0}^{\ast }$ equals $-2C/\norm{\Pi_{\mathfrak{M}^{\bot}} e_{+}}$, which is nonzero as a consequence of $C > 0$. In a second step we shall now derive an expansion of $T$ at points of the form $y + \mu_0^*$ for $y$ satisfying $e_+'y \neq 0$: For every $h \in \Rel^n$ we have
\begin{equation}\label{Taylor}
T(e_{+} + \mu_0^* + h) = T(e_{+} + \mu_0^*) + \grad T (e_{+} + \mu_0^*) h + Q(h)
\end{equation}
where $Q(h)/\norm{h} \rightarrow 0$ as $h \rightarrow 0$ and $h \neq 0$. Recall that $T$ is invariant under the group $G(\mathfrak{M}_0)$. In particular for every $y$ such that $e_{+}'y \neq 0$ we have
\begin{equation}
T(y + \mu_0^*) = T(\frac{e_{+}'y}{n} e_{+} + \mu_0^*  + \Pi_{\lspan(e_{+})^{\bot}}y) = T(e_{+} + \mu_0^* + \frac{n}{e_{+}'y} \Pi_{\lspan(e_{+})^{\bot}}y), 
\end{equation}
where the first equality holds because of $y = \Pi_{\lspan(e_{+})}y + \Pi_{\lspan(e_{+})^{\bot}}y$ and the second follows from invariance of $T$ w.r.t. $G(\mathfrak{M}_0)$. This means that whenever $e_+'y \neq 0$ holds, we can combine the equation in the previous display and Equation \eqref{Taylor} with $h = \frac{n}{e_{+}'y} \Pi_{\lspan(e_{+})^{\bot}}y$ to see that
\begin{equation}\label{Taylor2}
T(y + \mu_0^*) = T(e_{+} + \mu_0^*) + \frac{n}{e_{+}'y} \grad T (e_{+} + \mu_0^*) \Pi_{\lspan(e_{+})^{\bot}}y + Q(\frac{n}{e_{+}'y} \Pi_{\lspan(e_{+})^{\bot}}y)
\end{equation}
holds and that 
\begin{equation}\label{Taylor3}
Q(\frac{n}{e_{+}'y_m} \Pi_{\lspan(e_{+})^{\bot}}y_m)/\norm{Q(\frac{n}{e_{+}'y_m} \Pi_{\lspan(e_{+})^{\bot}}y_m)} \rightarrow 0,
\end{equation}
for any sequence $y_m$ satisfying $e_{+}'y_m \neq 0$, $\frac{n}{e_{+}'y_m} \Pi_{\lspan(e_{+})^{\bot}}y_m  \rightarrow 0$ and $\frac{n}{e_{+}'y_m} \Pi_{\lspan(e_{+})^{\bot}}y_m \neq 0$. Now, we choose a sequence $\rho_m \in (-1, 1)$ such that $\rho_m \rightarrow 1$ and apply Assumption \ref{AAR(1)} to obtain $\Lambda(\rho_m) \in \mathfrak{C}$ for every $m$. We intend to show that $P_{\mu_0^*, \Lambda(\rho_m)}(W(C)) \rightarrow 1/2$ along a subsequence. The last statement in Lemma \ref{A567PW} then implies $P_{\mu_0, \sigma^2 \Lambda(\rho_m)}(W(C)) \rightarrow 1/2$ for every pair $\mu_0 \in \mathfrak{M}_0$ and $0 < \sigma^2 < \infty$ along this subsequence. In Part 3 of Lemma G.1 in \cite{PP13} it is shown that $\Lambda(\rho_m) \rightarrow e_{+} e_{+}'$, $D_m = \Pi_{\lspan(e_{+})^{\bot}} \Lambda(\rho_m) \Pi_{\lspan(e_{+})^{\bot}}/s_m \rightarrow D$, where $s_m$ is a sequence of numbers such that $s_m > 0$, $s_m \rightarrow 0$ and $D$ is regular on $\lspan(e_{+})^{\bot}$. Furthermore, it is shown that $\Pi_{\lspan(e_{+})^{\bot}} \Lambda(\rho_m) \Pi_{\lspan(e_{+})}/s_m^{1/2} \rightarrow 0$. We can use these relations to derive three useful facts: (i) Observe that the matrix $s_m^{-1/2} \Pi_{\lspan(e_{+})^{\bot}} \Lambda(\rho_m)^{1/2}$ is an $n \times n$-dimensional nonnegative square root of the symmetric matrix $D_m$. Therefore, we can find an orthogonal matrix $U_m$ such that
\begin{equation}
s_m^{-1/2} \Pi_{\lspan(e_{+})^{\bot}} \Lambda(\rho_m)^{1/2} U_m = D_m^{1/2}
\end{equation}
holds. The sequence $D_m^{1/2}$ converges to $D^{1/2}$ as a consequence of $D_m \rightarrow D$ together with the continuity of taking the nonnegative definite symmetric matrix square root of a symmetric and nonnegative definite matrix. Since $U_m$ is orthogonal we can choose a subsequence $m'$ along which $U_m$ converges to $U$, say. Without loss of generality we henceforth assume $m' \equiv m$. Using the relation in the previous display we find that $s_m^{-1/2} \Pi_{\lspan(e_{+})^{\bot}} \Lambda(\rho_m)^{1/2}$ converges to 
\begin{equation}\label{Drep}
D^*  = D^{1/2}U',
\end{equation}
and we recall from above that $D^{1/2} \in \Rel^{n \times n}$ is regular on $\lspan(e_{+})^{\bot}$. (ii) We note that $\Lambda(\rho_m) \rightarrow e_+ e_+'$ implies 
\begin{equation}
\Lambda(\rho_m)^{1/2} \rightarrow n^{-1/2} e_{+}e_{+}'.
\end{equation}
(iii) We show that $D^* e_+ = 0$ must hold: Note that $\Pi_{\lspan(e_{+})^{\bot}} \Lambda(\rho_m) \Pi_{\lspan(e_{+})}/s_m^{1/2} \rightarrow 0$ can be rewritten as 
\begin{equation}
\left(s_m^{-1/2}\Pi_{\lspan(e_{+})^{\bot}} \Lambda(\rho_m)^{1/2}\right) \Lambda(\rho_m)^{1/2} \Pi_{\lspan(e_{+})} \rightarrow 0,
\end{equation}
that the term in brackets converges to $D^*$ by (i) and that the other term converges to $n^{-1/2}e_{+}e_{+}'$ by (ii). Therefore,
\begin{equation}
D^* n^{-1/2}e_{+}e_{+}' = 0,
\end{equation}
or equivalently $D^* \Pi_{\lspan(e_{+})} = 0$. But this implies $D^* e_{+} = 0$. Now, we are ready to show that $P_{\mu_0^*, \Lambda(\rho_m)}(W(C)) \rightarrow 1/2$. Let $\mathbf{G}$ be a random n-vector defined on some underlying probability space such that the probability measure induced by $\mathbf{G}$ on $(\Rel^n, \mathcal{B}(\Rel^n))$ equals $P_{0, I_n}$. Consequently, the random vector $\Lambda(\rho_m)^{1/2} \mathbf{G} + \mu_0^*$ induces the distribution $P_{\mu_0^*, \Lambda(\rho_m)}$ on $(\Rel^n, \mathcal{B}(\Rel^n))$. For notational convenience we write $\mathbf{G}_m = \Lambda(\rho_m)^{1/2} \mathbf{G}$. Consequently, we have
\begin{align}\label{Prob12}
P_{\mu_0^*, \Lambda(\rho_m)}\left(W(C)\right) &= \Pr(T(\mathbf{G}_m+ \mu_0^*) \geq C)\\
&= \Pr\left(s_m^{-1/2} \left[T(\mathbf{G}_m + \mu_0^*) - T(e_{+} + \mu_0^*)\right] \geq 0\right),
\end{align}
where we used $s_m^{-1/2} > 0$ and  $T(e_{+} + \mu_0^*) = C$ in deriving the second equality. Note that $e_+'\mathbf{G}_m \neq 0$ and $\norm{\frac{n}{e_{+}'\mathbf{G}_m} \Pi_{\lspan(e_{+})^{\bot}}\mathbf{G}_m} > 0$ on an event of probability one. Using the expansion developed in Equation \eqref{Taylor2} we see that with probability one $s_m^{-1/2} \left[T(\mathbf{G}_m + \mu_0^*) - T(e_{+} + \mu_0^*) \right]$ can be written as
\begin{align}
&s_m^{-1/2} \left[ \frac{n}{e_{+}'\mathbf{G}_m} \grad T (e_{+} + \mu_0^*) \Pi_{\lspan(e_{+})^{\bot}}\mathbf{G}_m + Q\left(\frac{n}{e_{+}'\mathbf{G}_m} \Pi_{\lspan(e_{+})^{\bot}}\mathbf{G}_m\right)\right] \\
= & ~~ \frac{n}{e_{+}'\mathbf{G}_m} \grad T (e_{+} + \mu_0^*) s_m^{-1/2} \Pi_{\lspan(e_{+})^{\bot}}\mathbf{G}_m + \norm{\frac{n}{e_{+}'\mathbf{G}_m} s_m^{-1/2} \Pi_{\lspan(e_{+})^{\bot}}\mathbf{G}_m} \\ & ~~~~~~~ \times  ~~~~~ \norm{\left(\frac{n}{e_{+}'\mathbf{G}_m} \Pi_{\lspan(e_{+})^{\bot}}\mathbf{G}_m\right)}^{-1} Q\left(\frac{n}{e_{+}'\mathbf{G}_m} \Pi_{\lspan(e_{+})^{\bot}}\mathbf{G}_m\right).
\end{align}
To derive the almost sure limit as $m \rightarrow \infty$ of the expression in the previous display we first observe that $\mathbf{G}_m$ converges point-wise to $n^{-1/2}e_+ e_+' \mathbf{G}$ because of (ii). From that it follows that $e_+' \mathbf{G}_m$ converges point-wise to $\sqrt{n} e_+'\mathbf{G}$ and that $\Pi_{\lspan(e_+)^{\bot}} \mathbf{G}_m$ converges point-wise to zero. An application of the continuous mapping theorem hence shows that
\begin{equation}
\frac{n}{e_{+}'\mathbf{G}_m} \Pi_{\lspan(e_{+})^{\bot}}\mathbf{G}_m \rightarrow 0
\end{equation}
almost surely as $m \rightarrow \infty$, which immediately implies
\begin{equation}
\norm{\left(\frac{n}{e_{+}'\mathbf{G}_m} \Pi_{\lspan(e_{+})^{\bot}}\mathbf{G}_m\right)}^{-1} Q\left(\frac{n}{e_{+}'\mathbf{G}_m} \Pi_{\lspan(e_{+})^{\bot}}\mathbf{G}_m\right) \rightarrow 0
\end{equation}
almost surely as $m \rightarrow \infty$ as a consequence of Equation \eqref{Taylor3} together with $Q(0) = 0$. We also observe that (i) above implies
\begin{equation}
\Pi_{\lspan(e_{+})^{\bot}}s_m^{-1/2}\mathbf{G}_m \rightarrow  D^* \mathbf{G}
\end{equation}
point-wise and thus, using the continuous mapping theorem again, we see that
\begin{equation}
\frac{n}{e_{+}'\mathbf{G}_m} \Pi_{\lspan(e_{+})^{\bot}}s_m^{-1/2}\mathbf{G}_m \rightarrow \frac{\sqrt{n}}{e_{+}'\mathbf{G}} D^* \mathbf{G}
\end{equation}
almost surely as $m \rightarrow \infty$ (where the limiting random vector is well defined almost-surely). This finally shows that 
\begin{equation}
s_m^{-1/2} \left[T(\mathbf{G}_m + \mu_0^*) - T(e_{+} + \mu_0^*) \right] \rightarrow \frac{\sqrt{n}}{e_{+}' \mathbf{G}} \grad T (e_{+} + \mu_0^*) D^* \mathbf{G},
\end{equation}
almost surely. We already know from Equation \eqref{Drep} that $D^* = D^{1/2} U'$, where $U$ is an orthogonal matrix. Furthermore $D^{1/2}$ maps $\Rel^n$ onto $\lspan(e_{+})^{\bot}$, and $\grad T (e_{+} + \mu_0^*)$ does not vanish everywhere on $\lspan(e_{+})^{\bot}$. Hence, we see that the probability that the limiting random variable in the previous display takes on the value $0$ vanishes because $\grad T (e_{+} + \mu_0^*) D^* \mathbf{G}$ is a Gaussian random variable with mean zero and positive variance. Hence, Equation \eqref{Prob12} together with Portmanteau theorem shows that
\begin{equation}\label{problim}
P_{\mu_0^*, \Lambda(\rho_m)}\left(W(C)\right) \rightarrow \Pr(\frac{\sqrt{n}}{e_{+}' \mathbf{G}} \grad T (e_{+} + \mu_0^*) D^{*} \mathbf{G} \geq 0).
\end{equation}
The covariance between the Gaussian mean-zero random variables $\grad T (e_{+} + \mu_0^*) D^* \mathbf{G}$ and $e_{+}' \mathbf{G}$ is given by
\begin{equation}
\grad T (e_{+} + \mu_0^*) D^* e_+ = 0,
\end{equation}
where the equality follows from (iii). Therefore, $e_{+}' \mathbf{G}$ and $\grad T (e_{+} + \mu_0^*) D^* \mathbf{G}$ are independent. Since the probability to the right in Equation \eqref{problim} equals the probability that the random variables $e_{+}' \mathbf{G}$ and $\grad T (e_{+} + \mu_0^*) D^* \mathbf{G}$ have the same sign it is now obvious that the limit equals $1/2$.
\end{proof}
\begin{lemma}\label{gradientT}
Assume that the triple $\kappa$, $M$, $p$ satisfies Assumption \ref{weightsPWRB} and let $T$ be as in Equation \eqref{tslrv} with $\hat{\Omega} = \hat{\Omega}_{\kappa, M, p}$. Let $\mu_0 \in \mathfrak{M}_0$. Then the following holds.
\begin{enumerate}
\item If $M \in \mathbb{M}_{KV}$ and $g^*_{\kappa, M, p}(y, X, R) \neq 0$, then $\grad T(\mu_0 + y)$ exists. 
\item Suppose that $M \notin \mathbb{M}_{KV}$, that $\kappa$ is continuously differentiable on the non-void complement of a closed set $\Delta(\kappa) \subseteq \Rel$, and that $g^*_{\kappa, M, p}(y, X, R) \neq 0$. Assume further that one of the following conditions is satisfied: 
\begin{enumerate}
\item $M(y) \neq 0$ and $\frac{i}{M(y)} \notin  \Delta(\kappa)$ for $|i| = 1, \hdots, n-p-1$. 
\item $M(y) = 0$ and $\kappa$ has compact support. 
\end{enumerate}
Then $\grad T(\mu_0 + y)$ exists.
\item If $M \notin \mathbb{M}_{KV}$, then for every $\delta \geq 0$ we have
\begin{align}
&\geschw{y \in \Rel^n: g_{\kappa, M, p}^*(y, X, R) \neq 0 \text{ and } M(y) = \delta } ~ \\ 
= &\geschw{y \in \Rel^n: g^*_{\kappa, M, p}(y, X, R) \neq 0 \text{ and } g^{(\delta)}_{\kappa, M, p}(y, X) = 0},
\end{align}
where $g^{(\delta)}_{\kappa, M, p}: \Rel^n \times \Rel^{n \times k} \rightarrow \Rel$ is a multivariate polynomial (explicitly constructed in the proof) that does not depend on the hypothesis $(R,r)$. 
\end{enumerate}
\end{lemma}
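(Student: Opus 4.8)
The plan is to reduce all three parts to an analysis of the single place where the test statistic can fail to be differentiable, namely the kernel weight matrix $\mathcal{W}_{n-p}$ in the representation \eqref{QREPr}. Everything else in $T$ is harmless: $\hat\beta$ is affine, $B_{p}$ (see \eqref{defB}) is a rational function of $y$ that is real-analytic wherever $g_1 g_2 \neq 0$, and matrix inversion is real-analytic on the invertible matrices; hence on the open set $\geschw{y : g^*_{\kappa,M,p}(y,X,R) \neq 0} = \Rel^n \backslash N^*(\hat\Omega_{\kappa,M,p})$ the map $y \mapsto \hat\Omega_{\kappa,M,p}(y)$, and thus $T$ via \eqref{QF}, is differentiable at a point as soon as $\mathcal{W}_{n-p}$ is. First I would record that the hypotheses are stated at $y$ while the conclusion is at $\mu_0 + y$; since $\mu_0 \in \mathfrak{M}$, the quantities $\hat V$, $\hat A^{(p)}$, $\hat Z$ and $M$ are invariant under the shift $y \mapsto \mu_0 + y$ (as established in the proof of Lemma \ref{A567PW}) and the zero set $N^*(\hat\Omega_{\kappa,M,p})$ is $G(\mathfrak{M})$-invariant, so $g^*(\mu_0+y,X,R) \neq 0$ and $M(\mu_0+y) = M(y)$. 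It therefore suffices to prove differentiability of $T$ at $\mu_0 + y$, and every condition on $y$ transfers verbatim to $\mu_0 + y$. Part 1 is then immediate: for $M \in \mathbb{M}_{KV}$ the bandwidth is a positive constant, every off-diagonal entry $\kappa((i-j)/M)$ of $\mathcal{W}_{n-p}$ is constant in $y$, so $\hat\Omega_{\kappa,M,p}$ is rational and real-analytic on $\Rel^n \backslash N^*$ and $\grad T(\mu_0+y)$ exists.

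For Part 2 the bandwidth is genuinely data-dependent, and I would treat the two listed cases by isolating the two possible sources of non-smoothness of $y' \mapsto \kappa(i/M(y'))$. In case (a) the point is that $M$ itself is differentiable near $y$: because $M(y) \neq 0$ and $M = c_1(\hat\alpha_j n)^{c_2}$ (respectively $M = \bar c_2([A/B]^2 n)^{\bar c_3}$) with a nonnegative rational base that must then be strictly positive at $y$, the outer power is applied to a positive real-analytic function and is differentiable near $y$; since $i/M(y) \notin \Delta(\kappa)$ and $\kappa$ is $C^1$ off the closed set $\Delta(\kappa)$, the chain rule yields differentiability of each entry of $\mathcal{W}_{n-p}$ at $y$.

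Case (b) is the crux and the main obstacle: here $M(y) = 0$, exactly where the outer power need not be differentiable, so one cannot differentiate $M$. The key observation is that compact support of $\kappa$ turns this defect into an advantage. If $\kappa$ vanishes off $[-L,L]$, then, using continuity of $M$ on $\Rel^n \backslash N(\hat\Omega_{\kappa,M,p})$, there is a neighborhood of $y$ on which $M < 1/L$, forcing $|i|/M(y') > L$ for every $|i| \geq 1$ and hence $\kappa(i/M(y')) = 0$ (the convention at $M = 0$ agreeing). Thus $\mathcal{W}_{n-p} \equiv I_{n-p}$ locally, $\hat\Omega_{\kappa,M,p}$ is rational near $y$, and the gradient exists. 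I expect the care needed to verify that this neighborhood is genuinely open and that the two conventions match at $M=0$ to be the fiddly part of the argument.

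Part 3 is a polynomial-encoding argument in the spirit of the proof of Lemma \ref{NPWRB}. On $\geschw{g^* \neq 0}$ all denominators entering $M$ are nonvanishing, and since $x \mapsto c_1 x^{c_2}$ (respectively $x \mapsto \bar c_2 x^{\bar c_3}$) is a strictly increasing bijection of $[0,\infty)$, the equation $M(y) = \delta$ is equivalent to the base of the outer power equalling an explicit nonnegative constant $\delta_*$ depending only on $\delta$ and the tuning constants: that is, $\hat\alpha_j(y) = \delta_*$ in the $\mathbb{M}_{AM}$ case and $A(y)^2 = \delta_* B(y)^2$ in the $\mathbb{M}_{NW}$ case, where $A$ and $B$ are the numerator and denominator of the bracketed ratio defining $M_{NW}$ (the value $\delta = 0$, i.e. $\delta_* = 0$, being included). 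I would then clear denominators by multiplying through by a suitable power of the factor in \eqref{factor} and of $\det(X'X)$, exactly as in Lemma \ref{NPWRB}, rewriting each equation as the vanishing of a multivariate polynomial $g^{(\delta)}_{\kappa,M,p}(y,X)$; this multiplication leaves the solution set unchanged on $\geschw{g^* \neq 0}$ because the factor is nonzero there. Since $\hat\alpha_j$, $A$ and $B$ are functions of $\hat Z$ alone, $g^{(\delta)}_{\kappa,M,p}$ is manifestly independent of $(R,r)$, which gives the stated equality of sets.
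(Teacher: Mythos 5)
Your proposal is correct and follows essentially the same route as the paper's own proof: reduce everything (via the $G(\mathfrak{M}_0)$-invariance from Lemma \ref{A567PW} and the representation \eqref{QREPr}) to smoothness of the off-diagonal entries of $\mathcal{W}_{n-p}$, handle $\mathbb{M}_{KV}$ by constancy, Condition (a) by the chain rule using positivity of the base and $i/M(y)\notin\Delta(\kappa)$, Condition (b) by continuity of $M$ plus compact support forcing the entries to vanish locally, and Part 3 by the monotonicity of the outer power map followed by clearing the nonvanishing denominators to obtain the polynomial $g^{(\delta)}_{\kappa,M,p}$. The only differences are cosmetic (you spell out the $\mathbb{M}_{NW}$ case of Part 3 slightly more than the paper, which omits those details), so no gaps to report.
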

\begin{proof}
We first verify Parts 1 and 2. Let us start by deriving a convenient expression for $T(\mu_0 + y)$ under the assumption $g^*_{\kappa, M, p}(y, X, R) \neq 0$. By Lemma \ref{N*PW} the assumption $g^*_{\kappa, M, p}(y, X, R) \neq 0$ is equivalent to $y \notin N^*(\hat{\Omega}_{\kappa, M, p})$. An application of Lemma \ref{A567PW} shows that $\hat{\Omega}_{\kappa, M, p}$ satisfies Assumption 5 in \cite{PP13}. An application of Part (ii) of this Assumption shows that $\mu_0 + y \notin N^*(\hat{\Omega}_{\kappa, M, p})$. We can therefore use Equation \eqref{tslrv} together with $R\hat{\beta}(y + \mu_0) - r = R\hat{\beta}(y)$ and $\hat{\Omega}_{\kappa, M, p}(\mu_0 + y) = \hat{\Omega}_{\kappa, M, p}(y)$ (both following from Assumption 5 in \cite{PP13}) to see that $y \notin N^*(\hat{\Omega}_{\kappa, M, p})$ implies
\begin{equation}
T(\mu_0 + y) =  \hat{\beta}(y)'R' \hat{\Omega}^{-1}_{\kappa, M, p}(y) R \hat{\beta}(y) 
= \hat{\beta}(y)'R' \left(\frac{n}{n-p} B_p(y)\mathcal{W}_{n-p}(y) B'_p(y)\right)^{-1} R \hat{\beta}(y),
\end{equation}
where in deriving the second equality we made use of the representation of $\hat{\Omega}_{\kappa, M, p}(y)$ developed in Equation \eqref{QREPr}. The function $\hat{\beta}(.)$ is linear and hence totally differentiable on $\Rel^n$. Furthermore, in the proof of Lemma \ref{N*PW} it is shown that the coordinates of the matrix $B_p(.)$ are multivariate rational functions (without singularities) on $\Rel^n \backslash N^*(\hat{\Omega}_{\kappa, M, p})$. In particular the coordinates of $B_p(.)$ are continuously partially differentiable on $\Rel^n \backslash N^*(\hat{\Omega}_{\kappa, M, p})$. To show that $\grad T(\mu_0 + y)$ exists at a given point $y \in  \Rel^n \backslash N^*(\hat{\Omega}_{\kappa, M, p})$ it is therefore sufficient to show that each off-diagonal element (recall that the diagonal is constant) of $\mathcal{W}_{n-p}(.)$ is continuously partially differentiable on an open neighborhood of $y$ in $\Rel^n \backslash N^*(\hat{\Omega}_{\kappa, M, p})$. Recall that the $i$-th off-diagonal element ($i  \in \geschw{1, \hdots, n-p-1}$) of $\mathcal{W}_{n-p}(.)$ evaluated at some $y \in  \Rel^n \backslash N^*(\hat{\Omega}_{\kappa, M, p})$ is given by
\begin{equation}
f_i(y) := \begin{cases}
\kappa(i/M(y)) & \text{ if } M(y) \neq 0 \\
0 & \text{ else}.
\end{cases}
\end{equation}
If $M \in \mathbb{M}_{KV}$ the sufficient condition above is obviously satisfied, because in this case $M > 0$ is constant and therefore $f_i(.)$ is constant on $\Rel^n \backslash N^*(\hat{\Omega}_{\kappa, M, p})$. This proves Part 1 of the lemma. Consider now Part 2. By considering separately the cases $M \in \mathbb{M}_{AM}$ and $M \in \mathbb{M}_{NW}$, we observe that $M(.)$ is continuously partially differentiable in an open neighborhood of any element $y$ of $\Rel^n \backslash N^*(\hat{\Omega}_{\kappa, M, p})$ satisfying $M(y) \neq 0$. We start with Condition (a). Let $y$ satisfy $y \notin N^*(\hat{\Omega}_{\kappa, M, p})$ and $M(y) \neq 0$. Fix an $i  \in \geschw{1, \hdots, n-p-1}$. By assumption $\kappa$ is continuously differentiable on an open neighborhood of $i/M(y) \notin \Delta(\kappa)$. Hence there exists an open neighborhood $U$ of $y$ in $\Rel^n \backslash N^*(\hat{\Omega}_{\kappa, M, p})$ on which $M(.)$ is strictly greater than zero and such that $\kappa(i/M(.))$ is continuously partially differentiable on $U$. It hence follows that $f_i(.)$ is continuously partially differentiable on $U$ because it coincides with $\kappa(i/M(.))$ on this set. To establish existence of the gradient under Condition (b) let $y$ satisfy $y \notin N^*(\hat{\Omega}_{\kappa, M, p})$ and $M(y) = 0$. Let $i$ be as before and recall that the support of $\kappa$ is compact by assumption. Since $M$ is continuous on $\Rel^n \backslash N^*(\hat{\Omega}_{\kappa, M, p})$, there exists an open neighborhood of $y$ in $\Rel^n \backslash N^*(\hat{\Omega}_{\kappa, M, p})$ such that for every point $y^*$ in this neighborhood we either have $M(y^*) = 0$ or that $i/M(y^*)$ is not contained in the support of $\kappa$. It follows that the function $f_i$ is constant equal to $0$, and thus is in particular continuously partially differentiable, on this neighborhood. 

To prove the third part of the lemma consider first the case $M \equiv M_{AM, 1, \omega} \in \mathbb{M}_{AM}$, where we dropped the index $c$ because the argument and the resulting polynomial do not depend on it. Suppose $y$ satisfies $g^*_{\kappa, M_{AM, 1, \omega}, p}(y, X, R) \neq 0$. Then $M_{AM, 1, \omega}(y)$ is well defined and by definition $M_{AM, 1, \omega}(y) = \delta$ if and only if $\hat{\alpha}_1(y) = n^{-1} (c_1^{-1} \delta)^{1/c_2} =: \delta^*$ holds, where $c_1$ and $c_2$ are positive constants. This can equivalently be written as
\begin{equation}
\sumab{i = 1}{k}\omega_i \frac{4 \hat{\rho}_i^2(y) \hat{\sigma}_i^4(y)}{(1-\hat{\rho}_i(y))^6(1+\hat{\rho}_i(y))^2} = \delta^* \sumab{i = 1}{k}\omega_i \frac{ \hat{\sigma}^4_i(y)}{(1-\hat{\rho}_i(y))^4}, 
\end{equation}
which, after multiplying both sides of the equation by $\prod_{j = 1}^k (1-\hat{\rho}_j(y))^6(1+\hat{\rho}_j(y))^2$ (which is nonzero), is seen to be equivalent to
\begin{align}
&\sumab{i = 1}{k} \omega_i \left[  4 \hat{\rho}_i^2(y) \hat{\sigma}_i^4(y) \prod_{j \neq i}^k (1-\hat{\rho}_j(y))^6(1+\hat{\rho}_j(y))^2 \right]  \\ 
& \hspace{2cm} - ~~ \delta^* \sumab{i = 1}{k}\omega_i  \left[\hat{\sigma}^4_i(y) (1+\hat{\rho}_i(y))^2 (1-\hat{\rho}_i(y))^2 \prod_{j \neq i}^k (1-\hat{\rho}_j(y))^6(1+\hat{\rho}_j(y))^2 \right] = 0 
\end{align} 
By multiplying both sides of this equation by a suitably large power of the products of the denominators of $\hat{\rho}_i(y)$ (which are nonzero), we can write the preceding equation equivalently as
\begin{equation}
\sum_{i = 1}^k \omega_i \bar{p}_i^{(\delta)}(\hat{Z}(y)) = 0
\end{equation}
where each $\bar{p}_i^{(\delta)}: \Rel^{k \times (n-p)} \rightarrow \Rel$ for $i = 1, \hdots, k$ is a multivariate polynomial. In a final step we multiply both sides of the equation by a suitably large power of the non-vanishing factor in Equation \eqref{factor} to obtain an equivalent equation of the form
\begin{equation}
g^{(\delta)}_{\kappa, M_{AM, 1, \omega}, p}(y, X) = \sum_{i = 1}^k \omega_i p_i^{(\delta)}(y, X) = 0,
\end{equation}
where each $p_i^{(\delta)}: \Rel^n \times \Rel^{n \times k} \rightarrow \Rel$ is a multivariate polynomial. Therefore, the condition 
\begin{equation}
g^*_{\kappa, M_{AM, 1, \omega}, p}(y, X) \neq 0 \text{ and } M_{AM, 1, \omega}(y) = \delta
\end{equation}
can be equivalently stated as 
\begin{equation}
g^*_{\kappa, M_{AM, 1, \omega}, p}(y, X, R) \neq 0 \text{ and } g^{(\delta)}_{\kappa, M_{AM, 1, \omega}, p}(y, X) = 0.
\end{equation}
Finally, we note that the multivariate polynomial $g^{(\delta)}_{\kappa, M_{AM, 1, \omega}, p}: \Rel^n \times \Rel^{n \times k} \rightarrow \Rel$ does not depend on the hypothesis $(R,r)$. This proves the last part of the lemma in case $M \equiv M_{AM, 1, \omega}  \in \mathbb{M}_{AM}$. The proof of the case $M \equiv M_{AM, 2, \omega} \in \mathbb{M}_{AM}$ is almost identical and therefore we omit it. We finally note that similar arguments can be used to prove the statement in case $M \in \mathbb{M}_{NW}$, but we omit details.
\end{proof}
\begin{proof}[Proof of Proposition \ref{generic}]
We first prove that the sets $\mathfrak{X}_2(e_+)$, $\mathfrak{X}_2(e_-)$, $\tilde{\mathfrak{X}}_2(e_+)$ and $\tilde{\mathfrak{X}}_2(e_-)$ do not depend on the specific choice of $\mu_{0, X}^* \in \mathfrak{M}_{0, X}$. This follows from an invariance argument. Consider for example the set $\mathfrak{X}_2(e_+)$, which is by definition a subset of $\mathfrak{X}_0 \backslash \mathfrak{X}_1(e_+)$. Every element $X$ of this superset satisfies $g_{\kappa, M, p}(., X, R) \not \equiv 0$. Hence, for every such $X$ the corresponding test statistic $T_X$ is invariant w.r.t. $G(\mathfrak{M}_{0, X})$ by Lemma \ref{A567PW}. It now immediately follows that $\mathfrak{X}_1(e_+)$ does not depend on the specific choice of $\mu_{0, X}^{\ast} \in \mathfrak{M}_{0, X}$. The same argument shows that the statement in Part 2 Condition (d) is independent of the specific choice of $\mu^*_{0, (e_+, \tilde{X})}$. We shall now prove the three main parts of the proposition and start with the first: 

1) We begin with the statement concerning $\mathfrak{X}_{1}\left( e_{+}\right)$. Under the maintained assumptions we know from Part 5 of Lemma \ref{N*PW} that $g_{\kappa, M,p}^*(.,.,.): \Rel^n \times \Rel^{n \times k} \times \Rel^{q \times k} \rightarrow \Rel$ is a multivariate polynomial. This immediately implies that $g_{\kappa, M,p}^*(e_+,.,R): \Rel^{n \times k} \rightarrow \Rel$ is a multivariate polynomial, showing that
\begin{equation}
\geschw{X \in \Rel^{n \times k}: g_{\kappa, M, p}^*(e_+, X,R) = 0}
\end{equation}
is an algebraic superset of $\mathfrak{X}_{1}\left( e_{+}\right)$. It hence suffices to show that the set in the previous display is a $\lambda_{\Rel^{n \times k}}$- null set, or equivalently to show that $g_{\kappa,M,p}^*(e_{+},.,R) \not \equiv 0$. To this end we shall use Lemma \ref{AUXCONSTR} (with $t = k$) to construct a matrix $X \in \mathfrak{X}_0$ such that $g_{\kappa, M, p}^*(e_{+}, X,R) \neq 0$. Let $H \in \Rel^{(k+1) \times k}$ be an auxiliary matrix the column vectors of which span $\lspan(\bar{e}_+)^{\bot}$, where $\bar{e}_+ = (1, \hdots, 1)' \in \Rel^{k+1}$ is the vector obtained from $e_+$ by selecting the coordinates with indices $j_i = 1 + (i-1)(p+1)$ for $i = 1, \hdots, k+1$ (we shall need a similar construction for $e_-$ later on). Note that this selection is feasible because $j_{k+1} = 1 + k(p+1) \leq n$ holds as a consequence of the assumption $n-[k(p+1) + p] - \mathbf{1}_{\mathbb{M}_{AM}}(M) \geq 0$ together with $p \geq 1$. We also note that $H$ does not contain a row consisting of zeros only. If the construction of $M$ involves a weights vector $\omega$ (which is assumed to be  functionally independent of the design) we choose the columns of $H$ in such a way that $H\omega = (-1, 1, 0, \hdots, 0)'$ which is possible because this vector is an element of $\lspan(\bar{e}_+)^{\bot}$ and $\omega \neq 0$ holds. We now let $X \in \Rel^{n \times k}$ be the matrix the non-zero rows of which are precisely $X_{j_i \cdot } = H_{i \cdot}$ for $i = 1, \hdots, k+1$, i.e.,
\begin{equation}\label{Xmatrix}
X = \left(H'_{1\cdot}, 0_{k,p}, H'_{2\cdot}, 0_{k,p}, H'_{3\cdot}, 0_{k,p}, \hdots, H'_{(k+1) \cdot}, 0_{k, n-k(p+1)-1}\right)' \in \Rel^{n \times k},
\end{equation}
where $0_{m_1, m_2}$ denotes the $m_1 \times m_2$-dimensional zero matrix (here $0_{k, n-k(p+1)-1}$ vanishes if $n-k(p+1) - 1 = 0$). Obviously $\rank(X) = \rank(H) = k$ holds, which implies $X \in \mathfrak{X}_0$. Furthermore, $e_+ \bot \lspan(X)$ holds, which follows immediately from $\bar{e}_+ \bot \lspan(H)$, because the non-zero columns of $X$ have column indices $j_i$ for $i = 1, \hdots, k+1$ by construction. Therefore, we see that $\hat{u}_X(e_+) = e_+$ showing that
\begin{equation}
\hat{V}_X(e_+) = X'\diag(\hat{u}_X(e_+)) = X'\diag(e_+).
\end{equation}
Now we apply Lemma \ref{AUXCONSTR} with $t = k$ to $(e_+, X) \in \Rel^n \times \mathfrak{X}_0$. That the tuple $(e_+, X)$ satisfies (A1) of that lemma is obvious from the preceding display and Equation \eqref{Xmatrix}. We also see that (A2) is satisfied because $j_{i+1} - j_i = p+1$ for $i = 1, \hdots k$, and $k(p+1) + p + \mathbf{1}_{\mathbb{M}_{AM}}(M) \leq n$ implies $n- j_{k+1} = n- k(p+1) - 1 \geq p-1$. That (A3) is satisfied follows from the preceding display together with $\rank(X) = \rank(H) = k$. To infer $g_{\kappa, M, p}^*(e_+, X, R) \neq 0$ from Part 2 of Lemma \ref{AUXCONSTR}, we consider three cases: First, if $M \in \mathbb{M}_{KV}$ (CKV) is obviously satisfied and we are done. Secondly, assume that $M \in \mathbb{M}_{AM}$. In this case we have by assumption $k(p+1) + p + 1 \leq n$ which implies $n- j_{k+1} = n- k(p+1) - 1 > p-1$. This shows that (CAM) is satisfied. Thirdly suppose that $M \in \mathbb{M}_{NW}$. Since $\hat{A}^{(p)}_{X}(e_+) = 0$ follows from Part 1 of Lemma  \ref{AUXCONSTR}, we see that  $\hat{Z}_X(e_+) = \hat{V}_{p, X}(e_+)$ and hence that the nonzero columns of $\hat{Z}_X(e_+)$ are precisely $H_{i \cdot}'$ for $i = 2, \hdots, k+1$. By construction we have $H_{2 \cdot} \omega \neq 0$ and $H_{i \cdot} \omega = 0$ for $i = 3, \hdots, k+1$. This shows that exactly one coordinate of $\omega'\hat{Z}_X(e_+)$ is non-zero which implies that (CNW) holds. To show that $\mathfrak{X}_1(e_-)$ is a $\lambda_{\Rel^{n \times k}}-$null set, we can use a similar construction: we replace $e_+$ by $e_-$ throughout. If $p$ is even we then have $\bar{e}_- = (-1, 1, -1, \hdots, (-1)^{k+1})'$. If $p$ is odd we then have $\bar{e}_- = (-1, -1, \hdots, -1)'$. Furthermore, if the construction of $M$ involves a weights vector we choose $H$ such that $H\omega = (1, 1, 0, \hdots, 0)'$ if $p$ is even, and $H\omega = (-1, 1, 0, \hdots, 0)'$ if $p$ is odd. The remaining arguments are identical.

Now consider $\mathfrak{X}_2(e_{+})$. Using Part 1 of Lemma \ref{gradientT} we see that in case $M \in \mathbb{M}_{KV}$ the set $\mathfrak{X}_2(e_+)$ is empty, because $(\grad T_X(.))|_{e_+ + \mu_{0, X}^*}$ exists whenever $X \in \mathfrak{X}_0 \backslash \mathfrak{X}_1(e_+)$. Next consider the cases where $M \notin \mathbb{M}_{KV}$ and where $\kappa$ satisfies Assumption \ref{CD}. From Part 2a of Lemma \ref{gradientT} we know that for $X \in \mathfrak{X}_0 \backslash \mathfrak{X}_1(e_+)$ the non-existence of $(\grad T_X(.))|_{e_+ + \mu_{0, X}^*}$ implies either $M(e_+) = 0$ or $i/M(e_+) \in \Delta(\kappa)$ for some $|i| = 1, \hdots, n-p-1$. The latter two cases can clearly be summarized as $M(e_+) \in \bar{\Delta}$, where $\bar{\Delta} = \geschw{\delta_0, \delta_1, \hdots, \delta_m}$ is a set consisting of finitely many elements. Therefore, 
\begin{align}
\mathfrak{X}_2(e_+) &\subseteq \geschw{X \in \mathfrak{X}_0 \backslash \mathfrak{X}_1(e_+):  (\grad T_X(.))|_{e_+ + \mu_0^*} \text{ does not exist}}  \\
&\subseteq \geschw{X \in \mathfrak{X}_0 \backslash \mathfrak{X}_1(e_+):  M(e_+) \in \bar{\Delta}}  \\
&= \bigcup_{i = 0}^m \geschw{X \in \mathfrak{X}_0 \backslash \mathfrak{X}_1(e_+): M(e_+) = \delta_i}.
\end{align}
We use Part 3 of Lemma \ref{gradientT} to rewrite the latter set as
\begin{align}
&\bigcup_{i = 0}^m \geschw{X \in \mathfrak{X}_0:  g^*_{\kappa, M, p}(e_+, X, R) \neq 0 \text{ and } g_{\kappa, M, p}^{(\delta_i)}(e_+, X) = 0} \\
= &\geschw{X \in \mathfrak{X}_0:  g^*_{\kappa, M, p}(e_+, X, R) \neq 0 \text{ and } \prod_{i = 0}^m g_{\kappa, M, p}^{(\delta_i)}(e_+, X) = 0} ,
\end{align}
which is clearly a subset of
\begin{equation}
\geschw{X \in \Rel^{n \times k}: \prod_{i = 0}^m g_{\kappa, M, p}^{(\delta_i)}(e_+, X) = 0}.
\end{equation}
Part 3 of Lemma \ref{gradientT} shows that $\prod_{i = 0}^m g_{\kappa, M, p}^{(\delta_i)}(e_+, .): \Rel^{n \times k} \rightarrow \Rel$ is a multivariate polynomial. We consider two cases: First assume that $\prod_{i = 0}^m g_{\kappa, M, p}^{(\delta_i)}(e_+, .) \not \equiv 0$. Consequently, the set in the previous display is a $\lambda_{\Rel^{n \times k}}$-null set. It hence follows that $\mathfrak{X}_2(e_+)$ is a $\lambda_{\Rel^{n \times k}}$-null set and we are done. Next, assume that $\prod_{i = 0}^m g_{\kappa, M, p}^{(\delta_i)}(e_+, .)  \equiv 0$. It follows that there must exist a single index $i$ such that $g_{\kappa, M, p}^{(\delta_i)}(e_+, .)  \equiv 0$ holds [this is easily shown by contradiction]. Part 3 of Lemma \ref{gradientT} hence shows that $X \in \mathfrak{X}_0$ and $g^*_{\kappa, M, p}(e_+, X, R) \neq 0$, i.e., $X \in \mathfrak{X}_0 \backslash \mathfrak{X}_1(e_+)$, implies $M(e_+) = \delta_i$. Clearly,
\begin{equation}
\mathfrak{X}_2(e_+) \subseteq \geschw{X \in \mathfrak{X}_0 \backslash \mathfrak{X}_1(e_+): T_X(e_+ + \mu_0^*) = C}.
\end{equation}
If $X \in \mathfrak{X}_0 \backslash \mathfrak{X}_1(e_+)$, then (cf. the argument in the beginning of the proof of Lemma \ref{gradientT} applied to $y = e_+$)
\begin{equation}\label{ts}
T_X(e_+ + \mu_{0, X}^*) = \hat{\beta}_X(e_+)'R'\left(\frac{n}{n-p} B_{p, X}(e_+)\mathcal{W}_{n-p}(e_+)B'_{p, X}((e_+)\right)^{-1} R \hat{\beta}_X(e_+).
\end{equation}
Furthermore, since $X \in \mathfrak{X}_0 \backslash \mathfrak{X}_1(e_+)$ implies $M(e_+) = \delta_i$, the matrix $\mathcal{W}_{n-p}(e_+)$ is constant $\bar{\mathcal{W}}_{n-p}$, say, on $\mathfrak{X}_0 \backslash \mathfrak{X}_1(e_+)$. Hence, for $X \in \mathfrak{X}_0 \backslash \mathfrak{X}_1(e_+)$, the statement $T_X(e_+ + \mu_{0, X}^*) = C$ is equivalent to
\begin{align}
&F^2_p(e_+, X) \left[\det(X'X) \hat{\beta}_X(e_+) \right]'R'\adj(\bar{B}_{p, X}(e_+)\bar{\mathcal{W}}_{n-p}\bar{B}'_{p, X}((e_+))) R \left[\det(X'X) \hat{\beta}_X(e_+) \right] \\[7pt]
& \hspace{3cm} = ~~ ~\frac{n}{n-p}\det(X'X)^2 \det(\bar{B}_{p, X}(e_+)\bar{\mathcal{W}}_{n-p}\bar{B}'_{p, X}((e_+))) C
\end{align}
where $\bar{B}_{p, X}(e_+)$ and $F_p(e_+, X)$ have been defined in the proof of Lemma \ref{N*PW}, where it is shown that  $g_{\kappa, M, p}(e_+, X) \neq 0$ and $X \in \mathfrak{X}_0$ (which is weaker than $X \in \mathfrak{X}_0 \backslash \mathfrak{X}_1(e_+)$) implies $F_p(e_+, X) \neq 0$. Furthermore, it is shown in the proof of Lemma \ref{N*PW} that $[\bar{B}_{p, .}(e_+)]_{ij}: \Rel^{n \times k} \rightarrow \Rel$ (for $1 \leq i \leq q$ and $1 \leq j \leq n-p$) is a multivariate polynomial, and that $F_p(e_+, .): \Rel^{n \times k}\rightarrow \Rel$ is a multivariate polynomial as well. It is easily seen that the coordinates of $\det(X'X) \hat{\beta}_X(e_+)$ as a function of $X$ are multivariate polynomials. Putting this together we have shown that
\begin{equation}
\mathfrak{X}_2(e_+) \subseteq \geschw{X \in \Rel^{n \times k}: p_{\kappa, M, p}(e_+, X, C) = 0},
\end{equation}
where $p_{\kappa, M, p}(e_+, ., C): \Rel^{n \times k} \rightarrow \Rel$ is a multivariate polynomial. Therefore, if we can show that $p_{\kappa, M, p}(e_+, ., C) \not \equiv 0$ we obtain that $\mathfrak{X}_2(e_+)$ is a $\lambda_{\Rel^{n \times k}}$-null set. In the proof of the part concerning $\mathfrak{X}_1(e_+)$ above we have already constructed an $X \in \mathfrak{X}_0 \backslash \mathfrak{X}_1$ that satisfies $e_+ \bot \lspan(X)$ which implies $\hat{\beta}_X(e_+) = 0$. Together with Equation \eqref{ts} this shows that $T_X(e_+ + \mu_{0, X}^*) = 0 < C$ holds for this specific $X$. But this immediately shows that $p_{\kappa, M, p}(e_+, X, C) \neq 0$ and we are done. Clearly, we can use an almost identical argument to prove the statement concerning $\mathfrak{X}_2(e_{-})$. Under Assumption \ref{AAR(1)} the set of matrices $X \in \mathfrak{X}_0$ for which the first three cases of Theorem \ref{thmlrvPW} do not apply is obviously a subset of $(\mathfrak{X}_1(e_+) \cup \mathfrak{X}_2(e_+)) \cap (\mathfrak{X}_1(e_-) \cup \mathfrak{X}_2(e_-))$. Hence the first part of the proposition follows.

\medskip

2) We start with the statement concerning $\tilde{\mathfrak{X}}_{1}\left( e_{-}\right)$. Under the maintained assumptions we know from Part 5 of Lemma \ref{N*PW} that $g_{\kappa, M,p}^*(.,., .): \Rel^n \times \Rel^{n \times k} \times \Rel^{q \times k} \rightarrow \Rel$ is a multivariate polynomial. This immediately implies that $g_{\kappa, M,p}^*(e_-,(e_+, .),R): \Rel^{n \times (k-1)} \rightarrow \Rel$ is a multivariate polynomial, which shows that
\begin{equation}
\geschw{\tilde{X} \in \Rel^{n \times (k-1)}: g_{\kappa, M, p}^*(e_-, (e_+, \tilde{X}), R) = 0}
\end{equation}
is an algebraic superset of $\tilde{\mathfrak{X}}_{1}\left( e_{-}\right)$. It hence suffices to show that the set in the previous display is a $\lambda_{\Rel^{n \times (k-1)}}$- null set, or equivalently to show that $g_{\kappa,M,p}^*(e_{-}, (e_+, .), R) \not \equiv 0$. Again, we shall use Lemma \ref{AUXCONSTR} with $t = k$ to construct a matrix $\tilde{X} \in \tilde{\mathfrak{X}}_0$ such that $g_{\kappa, M, p}^*(e_{-}, (e_+, \tilde{X}), R) \neq 0$. The situation here is more complicated than in the first part, because the first column of the design matrix we seek has to be the intercept. For our construction we need some additional ingredients: By definition $p^* = p+1$ if $p$ is odd, and $p^* = p$ if $p$ is even. If $p$ is odd set $v = \bar{e}_-^* = (-1, -1, \hdots, -1, 1)' \in \Rel^{k+1}$, where $\bar{e}_-^*$ is the vector obtained from $e_-$ by selecting the coordinates $j^*_i = j_i$ for $i = 1, \hdots, k$ and $j^*_{k+1} = j_k^* + p^* + 1$, where $j_i = 1 + (i-1)(p+1)$ for $i = 1, \hdots, k+1$ was defined in Part 1 above. This selection is feasible, because by assumption we have $k(p+1) + p^* \leq n$, which, since $p$ is odd, gives $k(p+1) + p +1 \leq n$, implying that $j^*_{k+1} = (k-1)(p+1) + p^* + 2 = k(p+1) + 2 \leq n$, because of $p \geq 1$. If $p$ is even set $v = \bar{e}_- = (-1, 1, -1, \hdots, (-1)^{k+1})'\in \Rel^{k+1}$, the vector obtained from $e_-$ by selecting the coordinates $j_i^* = j_i$ for $i = 1, \hdots, k+1$. Next, define 
\begin{equation}
z = (-1, k^{-1}, \hdots, k^{-1})' \in \Rel^{k+1}.
\end{equation}
We claim that $v$ and $z$ satisfy $u := \Pi_{\lspan(z)}v \neq 0$,  $x := \Pi_{\lspan(z)^{\bot}}v = v-u$ is linearly independent of $e := (1, \hdots, 1)' \in \Rel^{k+1}$ and $z$ is orthogonal to $e$. The latter property is clearly always satisfied, regardless of whether $p$ is even or odd. We thus only have to verify the first two conditions. We start with the case $p$ odd. Here we have $z'v = 2k^{-1}\neq 0$ and therefore $\Pi_{\lspan(z)}v \neq 0$. Furthermore $\Pi_{\lspan(z)^{\bot}}v = v - \norm{z}^{-2}z'v z$ can not equal $ce$ for some $c \in \Rel$, because the last and the last but one coordinate of $v$ are unequal. For $p$ even $z'v = (1 + k^{-1} s)$, where $s$ equals either $0$ (if $k$ is even) or $1$ (if $k$ is odd), therefore $z'v \neq 0$ holds and thus $\Pi_{\lspan(z)}v \neq 0$. Furthermore $\Pi_{\lspan(z)^{\bot}}v = v - \norm{z}^{-2} z'v z$ can not equal $ce$ for some $c \in \Rel$, because the second and third coordinate of $v$ are unequal. This proves the claim. Using these properties, we see that $u \in \lspan(z) \backslash \geschw{0}$ and 
\begin{equation}
u = v - x = v - \Pi_{\lspan(e, x)}(v-\Pi_{\lspan(z)}v) = v- \Pi_{\lspan(e, x)}v = \Pi_{\lspan(e, x)^{\bot}}v,
\end{equation}
where we have used that $z$ is orthogonal to both $e$ and $x$ to derive the third equality. We shall now define an auxiliary matrix. Let $L$ denote a $(k+1) \times k$-dimensional matrix such that $L_{\cdot 1} = e$, $L_{\cdot2} = x$ and such that the remaining $k-2$ columns $L_{\cdot j}$ for $j = 3, \hdots, k$ are linearly independent and orthogonal to $\lspan(e, x, v)$. Since $e$ and $x$ are linearly independent, we have $\rank(L) = k$. For later use we observe that
\begin{equation}
\Pi_{\lspan(L)}v = \Pi_{\lspan((e, x))} v = v - \Pi_{\lspan(e, x)^{\bot}}v = v-u = x,
\end{equation}
where the first equality follows immediately from $L_{\cdot j}$ for $j = 3, \hdots, k$ being linearly independent and orthogonal to $\lspan(e, x, v)$. This immediately shows  
\begin{equation}
\hat{\beta}_L(v) = (0, 1, 0, \hdots, 0)'\in \Rel^k.
\end{equation}
Define the two $k$-vectors $r_{-} = (1, -1, 0, \hdots, 0)'$ and $r_{+} = (1, 1, 0, \hdots, 0)'$. Let $X \in \Rel^{n \times k}$ be such that $X_{j_i^* \cdot} = L_{i \cdot}$ for $i = 1, \hdots, k+1$, and if the index $j \notin \geschw{j^*_1, \hdots, j^*_{k+1}}$, then let $X_{j \cdot} = r_+$ if $[e_{-}]_j = 1$, and let $X_{j \cdot} = r_-$ if $[e_{-}]_j = -1$. By construction the matrix $X$ is of the form $X = (e_+, \tilde{X})$. We claim that $\hat{\beta}_X(e_-) = \hat{\beta}_L(v)$. To see this denote the set of indices $j \in \geschw{1, \hdots, n} \backslash \geschw{j_1^*, \hdots, j_{k+1}^*}$ such that $[e_{-}]_j  = -1$ by $\mathcal{I}_-$, and the set of indices $j \in \geschw{1, \hdots, n} \backslash \geschw{j_1^*, \hdots, j_{k+1}^*}$ such that $[e_{-}]_j = 1$ by $\mathcal{I}_+$. The sum of squares $S(\beta) = \norm{e_- - X \beta}^2$ can be written as
\begin{align}
S(\beta) &= \sum_{i = 1}^{k+1} ([e_-]_{j^*_i} - X_{j^*_i \cdot} \beta)^2 + \sum_{j \in \mathcal{I}_-} (-1 - r_- \beta)^2 + \sum_{j \in \mathcal{I}_+} (1 - r_+ \beta)^2 \\
&= \sum_{i = 1}^{k+1} (v_i - L_{i \cdot} \beta)^2 + \sum_{j \in \mathcal{I}_-} (-1 - r_- '\beta)^2 + \sum_{j \in \mathcal{I}_+} (1 - r_+ '\beta)^2 \\
&= \norm{v - L\beta}^2 + \sum_{j \in \mathcal{I}_-} (-1 - r_-' \beta)^2 + \sum_{j \in \mathcal{I}_+} (1 - r_+' \beta)^2 
\end{align}
If we now plug in $\beta =  \hat{\beta}_L(v)$ and note that $r_+' \hat{\beta}_L(v) = 1$ and $r_-' \hat{\beta}_L(v) = -1$ we see that
\begin{equation}
S(\hat{\beta}_L(v)) = \sum_{i = 1}^{k+1} (v_i - L_{i \cdot}\hat{\beta}_L(v))^2 = \min_{\beta \in \Rel^{k}} \norm{v - L\beta}^2.
\end{equation}
This immediately proves the claim $\hat{\beta}_X(e_-) = \hat{\beta}_L(v)$. Hence, the residual vector satisfies
\begin{equation}
[\hat{u}_{X}(e_-)]_j =  \begin{cases}
u_i & \text{ if } j = j^*_i \text{ for some } i = 1, \hdots, k+1 \\
0 & \text{else}.
\end{cases}
\end{equation}
This immediately entails that $\hat{V}_X(e_-) = X' \diag(\hat{u}_{X}(e_-))$ equals
\begin{equation}\label{hatVeminus}
(u_1 L'_{1 \cdot}, 0_{k, p}, u_2 L'_{2 \cdot}, 0_{k, p} , \hdots, u_{k}L_k', 0_{k, p^*}, u_{k+1} L_{(k+1) \cdot}', 0_{k, n- [k(p+1) + p^* - p + 1]}),
\end{equation}
where the indices of the nonzero columns of this matrix are precisely $j_i^*$ for $i = 1, \hdots, k+1$, because the first column of $L$ is $e$ and $u_i \neq 0$ for $i = 1, \hdots, k+1$, the latter following since $u \in \lspan(z) \backslash \geschw{0}$ and $z_i \neq 0$ for $i = 1, \hdots, k+1$ by definition. In deriving the dimension of $ 0_{k, n- [k(p+1) + p^* - p + 1]}$ we used 
\begin{equation}
j_{k+1}^* = k(p+1) + 1 + p^* - p = \begin{cases}
k(p+1) + 2& \text{ if } p \text{ odd} \\
k(p+1) + 1& \text{ if } p \text{ even}.
\end{cases}
\end{equation}
Now we apply Lemma \ref{AUXCONSTR} (with $t = k$). Clearly, $\rank(X) = \rank(L) = k$. From Equation \eqref{hatVeminus}, and the discussion following it, we see that the tuple $(e_-, X) \in \Rel^n \times \mathfrak{X}_0$ satisfies Assumption (A1). Assumption (A2) is satisfied, because $j_{i+1}^* - j_{i}^* \geq p+1$ for $i = 1, \hdots, k$, and because we see from the previous display that $n-j_{k+1}^* = n- [k(p+1) + 1 + p^* - p]$, which together with the assumption $k(p+1)+ p^* + \mathbf{1}_{\mathbb{M}_{AM}}(M) \leq n$ implies $n-j_{k+1}^* \geq p-1$. Assumption (A3) is satisfied because $\rank(\hat{V}_X(e_-)) = \rank(L) = k$. If $M \in \mathbb{M}_{KV}$ we are done. Consider the case $M \in \mathbb{M}_{AM}$. We show that Condition (CAM) is satisfied. But this is obvious, because the assumption $k(p+1)+p^* + 1 \leq n$ immediately implies $n-j_{k+1}^* > p-1$. Suppose $M \in \mathbb{M}_{NW}$. We apply Part 4 of Lemma \ref{AUXCONSTR}. For this we claim that either $[\hat{V}_X(e_-)]_{1j_i^*} > 0$ for $i = 2, \hdots, k+1$ or $[\hat{V}_X(e_-)]_{1j_i^*} < 0$ for $i = 2, \hdots, k+1$. Assuming that this claim is true, the lemma shows that there exists a regular matrix $Q \in \Rel^{k \times k}$ such that $X Q \in \mathfrak{X}_0$, the first column of $X Q$ is $e_+$ and $g_{\kappa, M, p}^*(e_+, XQ, R) \neq 0$, and we are done. To prove the claim recall that by construction $u \in \lspan(z) \backslash \geschw{0}$ holds, which shows that either $u_i < 0$ for $i = 2, \hdots, k+1$ or $u_i > 0$ for $i = 2, \hdots, k+1$. Furthermore, the first column of $L$ is the vector $e = (1, \hdots, 1)$. Equation \eqref{hatVeminus} now shows that $[\hat{V}_X(e_-)]_{1j_i^*} = u_i$ for $i = 2, \hdots, k+1$. This proves the claim.

The part of the statement concerning $\tilde{X}_2(e_{-})$ is established by exploiting an argument similar to the one given in Part 1 of the proof. Firstly, if $M \in \mathbb{M}_{KV}$, then we know from Part 1 of Lemma \ref{gradientT} that $g_{\kappa, M, p}^*(e_-, (e_+, \tilde{X}), R) \neq 0$ implies existence of $\grad(T_{(e_+, \tilde{X})}(.))|_{e_- + \mu_{0, (e_+, \tilde{X})}^*}$. Therefore, $\tilde{X}_2(e_{-})$ is empty in Case (a). It remains to prove the remaining three cases, in all of which Assumption \ref{CD} holds. Clearly we can also assume that $M \notin \mathbb{M}_{KV}$. We start with the following observation: Combining Assumption \ref{CD} with Part 2a of Lemma \ref{gradientT} as in Part 1 of the proof, we see that there exists an integer $m \geq 0$ and real numbers $\delta_0, \hdots, \delta_m$, such that
\begin{equation} \label{INCL}
\tilde{X}_2(e_{-}) \subseteq \geschw{\tilde{X} \in \Rel^{n \times (k-1)}: \prod_{i = 0}^m g_{\kappa, M, p}^{(\delta_i)}(e_-, (e_+, \tilde{X})) = 0}.
\end{equation}
It follows with the same argument as in Part 1 that either $\tilde{X}_2(e_{-})$ is a $\lambda_{\Rel^{n \times (k-1)}}$-null set, or there exists an index $i$ such that $g_{\kappa, M, p}^{(\delta_i)}(e_-, (e_+, .)) \equiv 0$. In the former case we are done. In the latter case one can show, with a similar argument as in Part 1 of the proof, that
\begin{equation}\label{supset}
\tilde{\mathfrak{X}}_2(e_-) \subseteq \geschw{\tilde{X} \in \Rel^{n \times (k-1)}: p_{\kappa, M, p}(e_-, (e_+, \tilde{X}), C) = 0},
\end{equation}
where $p_{\kappa, M, p}(e_-, (e_+, .), C): \Rel^{n \times (k-1)} \rightarrow \Rel$ is a multivariate polynomial. Either we have that $p_{\kappa, M, p}(e_-, (e_+, .), C) \not \equiv 0$ and $\tilde{\mathfrak{X}}_2(e_-)$ is a $\lambda_{\Rel^{n\times (k-1)}}$-null set, or $p_{\kappa, M, p}(e_-, (e_+, .), C) \equiv 0$ and the superset in the previous display coincides with $\tilde{\mathfrak{X}}_0 \backslash \tilde{\mathfrak{X}}_1(e_-)$. Consider Condition (d). If the function $\tilde{X} \mapsto T_{(e_+, \tilde{X})}(\mu_{0, (e_+, \tilde{X})}^* + e_-)$ is not constant $C$ on $\tilde{\mathfrak{X}}_0 \backslash \tilde{\mathfrak{X}}_1(e_-)$, then $p_{\kappa, M, p}(e_-, (e_+, .), C) \not \equiv 0$, and hence the superset in Equation \eqref{supset} is a $\lambda_{\Rel^{n\times (k-1)}}$-null set. This shows that $\tilde{\mathfrak{X}}_2(e_{-})$ is a null set under Condition (d).

For Condition (b) we consider again the inclusion in Equation \eqref{INCL}. Either the superset is a null set and we are done, or there must exist a real number $\delta_i$ such that $g_{\kappa, M, p}^{(\delta_i)}(e_-, (e_+, .)) \equiv 0$. Assume the latter. We exploit a property of the matrix $X = (e_+, \tilde{X})$ with $\tilde{X} \in \tilde{\mathfrak{X}}_0 \backslash \tilde{\mathfrak{X}}_1(e_-)$ constructed above. In the proof of Part 2 (CAM) of Lemma \ref{AUXCONSTR} it is shown that for this specific $X$ we have $M(e_-) = 0$. Therefore, $g_{\kappa, M, p}^{(0)}(e_-, (e_+, .)) \equiv 0$, or equivalently $M(e_-) = 0$ for every design matrix $X = (e_+, \tilde{X})$ with $\tilde{X} \in \tilde{\mathfrak{X}}_0 \backslash \tilde{\mathfrak{X}}_1(e_-)$. But since the kernel satisfies Assumption \ref{CDL}, the existence of $(\grad T_{(e_+, \tilde{X})}(.))|_{\mu_{0, (e_+, \tilde{X})}^* + e_-}$ for every $\tilde{X} \in \tilde{\mathfrak{X}}_0 \backslash \tilde{\mathfrak{X}}_1(e_-)$ then follows from Part 2b of Lemma \ref{gradientT}. Hence, $\tilde{\mathfrak{X}}_2(e_-)= \emptyset$. 

Consider Condition (c). We use a similar argument as under Condition (b): We establish the existence of a sequence of matrices $(e_+, \tilde{X}_m)$ with $\tilde{X}_m$ eventually in $\tilde{\mathfrak{X}}_0 \backslash \tilde{\mathfrak{X}}_1(e_-)$, such that $M(e_-) \rightarrow 0$ as $m \rightarrow \infty$. From an argument as in the proof under Condition (b) this then implies that either $\tilde{\mathfrak{X}}_2(e_-)$ is a null set, or that $M(e_-) \equiv 0$ on $\tilde{\mathfrak{X}}_0 \backslash \tilde{\mathfrak{X}}_1(e_-)$. But since $\kappa$ satisfies Assumption \ref{CDL}, it then follows from Part 2b of Lemma \ref{gradientT} that in the latter case $\tilde{\mathfrak{X}}_2(e_-)$ is empty. This then proves the claim. It remains to construct a sequence $\tilde{X}_m$ as claimed. By assumption $\omega_i > 0$ for some $i > 1$. Assume without loss of generality that $i = 2$ (otherwise we have to interchange the columns of the $\tilde{X}_m$ sequence to be constructed accordingly). Let $X = (e_+, \tilde{X})$ be as constructed above. Let $\gamma_m > 0$ be a sequence diverging to $\infty$. Recall that by construction $\tilde{X}_{j^*_i 1} = x_{i}$ for $i = 1, \hdots, k+1$. Since $p$ is odd, a simple calculation shows that $x = \Pi_{\lspan(z)^{\bot}} v$ equals
\begin{equation}
x = \left(-1 + \frac{2}{k+1}, -1 - \frac{2k^{-1}}{k+1}, \hdots, -1 - \frac{2k^{-1}}{k+1}, 1 - \frac{2k^{-1}}{k+1}\right)'.
\end{equation}
Let $c = 1 + \frac{2k^{-1}}{k+1}$ and note that 
\begin{equation}
x + ce = 2 (\frac{k^{-1}+1}{k+1}, 0, \hdots, 0, 1)'.
\end{equation}
Define the $k \times k$ dimensional regular matrix
\begin{equation}
Q_m = Q D_m = 
\begin{pmatrix}
1 &  c & \hdots & \hdots & \hdots & 0 \\
0 &  1 & 0 & \hdots & \hdots & 0 \\
0 & 0 & 1 & 0 & \hdots & 0 \\
\vdots & \vdots \vdots & \vdots & \vdots & \vdots \\
0 & 0 & 0 & 0 & \hdots & 1
\end{pmatrix} \diag(1, \gamma_m, 1, \hdots, 1).
\end{equation}
Clearly, post-multiplying a matrix with $k$ columns by $Q_m$ has the same effect as adding $c$ times the first column to the second column, then multiplying the column so obtained by $\gamma_m$ and leaving all other columns unchanged. Since $L_{\cdot 1} = e$ and $L_{\cdot 2} = x$, the expression for $x + ce$ above shows that the second column of $L Q$, has precisely two nonzero elements with indices $1$ and $k+1$, respectively. Since $X = (e_+, \tilde{X}) \in \mathfrak{X}_0$ and $(e_-,X)$ satisfies (A1)-(A3) as established above, Part 3 of Lemma \ref{AUXCONSTR} shows that 
\begin{equation}
X_m = (e_+, \tilde{X}) Q_m = (e_+, \tilde{X}_m) \in \mathfrak{X}_0,
\end{equation}
and that the tuple $(e_-, X_m) \in \Rel^n \times \mathfrak{X}_0$ satisfies (A1), (A2) and (A3). Hence $\hat{A}^{(p)}_{ X_m }(e_-) = 0$ holds. As a consequence $\hat{Z}_{X_m}(e_-)$ is well defined for every $m$. Using 
\begin{equation}
\hat{Z}_{X_m}(e_-) = D_m Q' \hat{Z}_{X}(e_-)
\end{equation}
(cf. the proof of Lemma \ref{AUXCONSTR} Part 4) and $\rank(\hat{Z}_{X}(e_-)) = k$, which was shown above, we see that $\rank(\hat{Z}_{X_m)}(e_-)) = k$ must hold, which together with $\omega \neq 0$ immediately implies $\bar{\sigma}_{0, X_m} (e_-) \neq 0$. Since $\hat{Z}_{X}(e_-)$ is obtained from $\hat{V}_{X}(e_-)$ by deleting its first $p$ columns, we observe, using the remark concerning the second column of $L Q$ above together with Equation \eqref{hatVeminus}, that the second row of $Q' \hat{Z}_{X}(e_-)$ has exactly one non-zero coordinate, namely $2 u_{k+1}$. Consider 
\begin{align}\label{quotsigma}
\frac{\bar{\sigma}_{i, X_m} (e_-)}{\bar{\sigma}_{0, X_m} (e_-)} 
=& \frac{\sum_{j=|i| + 1}^{n-p} \omega'[\hat{Z}_{ X_m}(y)]_{\cdot j}[\hat{Z}_{ X_m}(y)]_{\cdot (j-|i|)}'\omega}{\sum_{j=1}^{n-p} \omega'[\hat{Z}_{ X_m}(y)]_{\cdot j}[\hat{Z}_{ X_m}(y)]_{\cdot j}'\omega} \\
=& \frac{\sum_{j=|i| + 1}^{n-p} \bar{\omega}_m'[Q' \hat{Z}_{ X}(y)]_{\cdot j}[Q' \hat{Z}_{ X}(y)]_{\cdot (j-|i|)}'\bar{\omega}_m}{\sum_{j=1}^{n-p} \bar{\omega}_m'[Q' \hat{Z}_{ X}(y)]_{\cdot j}[Q' \hat{Z}_{ X}(y)]_{\cdot j}'\bar{\omega}_m}, 
\end{align}
for $|i| = 1, \hdots, n-p-1$, where $\bar{\omega}_m= D_m \omega /\norm{D_m \omega}$. Clearly $\bar{\omega}_m \rightarrow (0, 1, 0, \hdots, 0)$. Since the second row of $Q' \hat{Z}_{ (e_+, \tilde{X})}$ contains by construction precisely one nonzero entry, it follows that the limit of \eqref{quotsigma} must be $0$ for $i = 1, \hdots, n-p-1$. Furthermore we have $\bar{\sigma}_{0, X_m} (e_-) \rightarrow \infty$. It immediately follows from $w(0) = 1$ and the definition of $M$ that $M(e_-)$ is well defined for $m$ large and that it converges to $0$ as $m \rightarrow \infty$. The remaining part of the proposition is obvious.

\medskip

3) Let $\tilde{X} \in \mathfrak{\tilde{X}}_0$ and assume that $X = (e_{+}, \tilde{X})$ satisfies $g_{\kappa, M, p}^*(., X, R) \not \equiv 0$. Obviously, $e_{+} \in \text{span}(X)$. Note that $\hat{\beta}_X(e_{+}) = e_{1}(k)$. The first column of $R$ is non-zero. Therefore $R\hat{\beta}_X(e_{+}) \neq 0$. Thus we can (since Assumption \ref{AAR(1)} holds) apply Part 4 of Theorem \ref{thmlrvPW}.
\end{proof}

\section{Proofs of Results in Section \ref{pos}}\label{AD}

\begin{proof}[Proof of Theorem \ref{excPW}]
We verify the assumptions of Theorem 5.21 in \cite{PP13} with $\hat{\Omega}_{\kappa, M, p} = \check{\Omega}$ and $\hat{\beta} = \check{\beta}$. Because $g^*_{\kappa, M, p}(., X, R) \not \equiv 0$ by assumption, and  since the triple $\kappa$, $M$, $p$ satisfies Assumption \ref{weightsPWRB}, we can use Lemma \ref{A567PW} to conclude that $\hat{\beta}$ and $\hat{\Omega}_{\kappa, M, p}$ satisfy Assumptions 5, 6 and 7 in \cite{PP13}, that $\hat{\Omega}_{\kappa, M, p}$ is almost everywhere positive definite and that $T$ is invariant w.r.t. $G(\mathfrak{M}_0)$. Assumption \ref{approxAAR(1)}, Remark \ref{RapproxAAR(1)} (Part (iii)) together with Remark 5.14 (ii) in \cite{PP13} now shows that $J(\mathfrak{C}) = \lspan(e_+) \cup \lspan(e_-)$ and that all assumptions on $\mathfrak{C}$ appearing in Theorem 5.21 in \cite{PP13} are satisfied. Because $e_+, e_- \in \mathfrak{M}$ is assumed we have $J(\mathfrak{C}) \subseteq \mathfrak{M}$. The assumption $R\hat{\beta}(e_+) = R\hat{\beta}(e_-) = 0$ even implies $J(\mathfrak{C}) \subseteq \mathfrak{M}_0 -\mu_0$ (for some arbitrary $\mu_0 \in \mathfrak{M}_0$). Invariance of $T$ w.r.t. $G(\mathfrak{M}_0)$ then shows that Equation (34) in Theorem 5.21 of \cite{PP13} is satisfied. The assumptions on $\check{\Omega}$ appearing in Parts 2 and 3 of that theorem are satisfied, because $\hat{\Omega}_{\kappa, M, p}$ is positive definite almost everywhere. The theorem now follows from Theorem 5.21 in \cite{PP13}, using a standard subsequence argument, positive definiteness of every element of $\mathfrak{C}$ and compactness of $\mathfrak{C}^*$, to obtain the second statement in Part 3 from the corresponding Part of Theorem 5.21 in \cite{PP13}. The claim in parenthesis in Part 3 follows from the corresponding claim in parenthesis in Theorem 5.21 of \cite{PP13}, together with the observation that the conditions on $e_+$ and $e_-$ have only been used to verify the condition in Equation (34) of Theorem 5.21 of \cite{PP13} (cf. the proof of Theorem 3.7 in \cite{PP13}).
\end{proof}

\begin{proof}[Proof of Theorem \ref{TU_3}]
We apply Theorem 5.21 of \cite{PP13} with the estimators $\hat{\Omega}_{\kappa, \bar{M}, p, \bar{X}} = \check{\Omega}$ and $(I_k, 0) \hat{\beta}_{\bar{X}} = \check{\beta}$. Obviously, the test statistic defined in Equation (28) of \cite{PP13} based on these estimators coincides with the test statistic $\bar{T}$ as defined in the statement of the present theorem. Since the assumptions concerning $\mathfrak{C}$ in the present theorem are the same as in Proposition \ref{excPW}, we see from the proof of this proposition that it suffices to verify that $\hat{\Omega}_{\kappa, \bar{M}, p, \bar{X}}$ and $(I_k, 0) \hat{\beta}_{\bar{X}}$ satisfy Assumption 5 in \cite{PP13}, that $\hat{\Omega}_{\kappa, \bar{M}, p, \bar{X}}$ is almost everywhere positive definite (implying that Assumptions 6 and 7 in \cite{PP13} are satisfied), and that the invariance condition in Equation (34) of \cite{PP13} is satisfied by $\bar{T}$. By definition, $\hat{\Omega}_{\kappa, \bar{M}, p, \bar{X}}$ is the estimator one would obtain following Steps 1-3 of the construction in Section \ref{tpwc} based on $\kappa$, $\bar{M}$ and $p$, if $\bar{X}$ was the underlying design matrix (observe that $\bar{X}$ is of full column rank) and $(\bar{R}, r)$ was the hypothesis to be tested. By assumption, the triple $\kappa$, $M$, $p$ satisfies Assumption \ref{weightsPWRB} w.r.t. the (dimensions $k$ and $n$ of the) design matrix $X$ and additionally $1 \leq p \leq n/(k+3)$ holds. From $1 \leq \bar{k}-k \leq 2$, and the definition of $\bar{M}$ it follows that the triple $\kappa$, $\bar{M}$, $p$ satisfies Assumption \ref{weightsPWRB} w.r.t. (the dimensions $\bar{k}$ and $n$ of) $\bar{X}$. Furthermore, it is assumed that $g^*_{\kappa, \bar{M}, p}(., \bar{X}, \bar{R}) \not \equiv 0$. Therefore, we can apply Lemma \ref{A567PW}, acting as if $\bar{X}$ was the underlying design matrix, to conclude that $\hat{\beta}_{\bar{X}}$ and $\hat{\Omega}_{\kappa, \bar{M}, p, \bar{X}}$ satisfy Assumption 5 in \cite{PP13} with $N = N(\hat{\Omega}_{\kappa, \bar{M}, p, \bar{X}})$ and $k$ replaced by $\bar{k}$, $X$ replaced by $\bar{X}$ and $\mathfrak{M}$ replaced by $\bar{\mathfrak{M}} = \lspan(\bar{X})$). Furthermore, Lemma \ref{A567PW} shows that $\hat{\Omega}_{\kappa, \bar{M}, p, \bar{X}}$ is almost everywhere positive definite. We now apply Part 1 of Proposition 5.23 in \cite{PP13} to obtain that  $\hat{\Omega}_{\kappa, \bar{M}, p, \bar{X}}$ and $(I_k, 0) \hat{\beta}_{\bar{X}}$ satisfy (the original) Assumption 5 in \cite{PP13}, and that the invariance condition is satisfied. To this end, it suffices to verify that in each of the four cases we have $\lspan(J(\mathfrak{C})) \cap \mathfrak{M} \subseteq  \mathfrak{M}_0 - \mu_0$ (for some arbitrary $\mu_0 \in \mathfrak{M}_0$). This is obvious in the first three cases. For Case 4 we can use exactly the same argument as in the proof of Part 4 in Theorem 3.8 in \cite{PP13}. 
\end{proof}

\begin{proof}[Proof of Proposition \ref{genericADJ}]
We begin with the proof of the first statement. We note that for $\lambda_{\Rel^{n \times k}}$- almost every $X \in \mathfrak{X}_0$ Case 3 of Theorem \ref{TU_3} applies: Since by assumption $(k+3)(p^* + 2) + p-1 \leq n$ and by definition $p^* \geq 1$, we have $k+2 < n$. Therefore, the set of matrices $X$ in $\Rel^{n \times k}$ such that $\det((X, e_+, e_-)(X, e_+, e_-)') = 0$ holds is a $\lambda_{\Rel^{n \times k}}$- null set. Hence, $e_+, e_- \notin \mathfrak{M}_X$ and $\rank((X, e_+, e_-)) = k+2$  holds for $\lambda_{\Rel^{n \times k}}$- almost every $X \in \mathfrak{X}_0$. It remains to verify that $g^*_{\kappa, \bar{M}, p}(., \bar{X}, \bar{R}) \not \equiv 0$ for almost every $X \in \mathfrak{X}_0$, where $\bar{X} = \bar{X}(X) = (X, e_+, e_-)$, $\bar{R} = (R, 0, 0)$ and $\bar{M}$ is constructed as outlined in Theorem \ref{TU_3}. For that it suffices to find a matrix $X \in \Rel^{n \times k}$ and a vector $y \in \Rel^{n}$ such that $g^*_{\kappa, \bar{M}, p}(y, \bar{X}, \bar{R}) \neq 0$. To see this note that the triple $\kappa$, $\bar{M}$, $p$ satisfies Assumption \ref{weightsPWRB} w.r.t. (the dimensions of) $\bar{X}$ (cf. the proof of Theorem \ref{TU_3}). Therefore, Lemma \ref{N*PW} shows that $(y, X) \mapsto g^*_{\kappa, \bar{M}, p}(y, \bar{X}, \bar{R})$ is a multivariate polynomial. If we can find a matrix $X$ and a vector $y$ as above, this implies that the multivariate polynomial $(y, X) \mapsto g^*_{\kappa, \bar{M}, p}(y, \bar{X}, \bar{R})$ is not the zero polynomial, and therefore the zero set of this multivariate polynomial is a $\lambda_{\Rel^n \times \Rel^{n \times k}}$- null set. It follows that for $\lambda_{\Rel^{n \times k}}$- almost every $X$ we must have $g^*_{\kappa, \bar{M}, p}(., \bar{X}, \bar{R}) \not \equiv 0$ [Assuming the opposite, there exists a set $A \in \mathcal{B}(\Rel^{n \times k})$ of positive $\lambda_{\Rel^{n \times k}}$- measure such that $g^*_{\kappa, \bar{M}, p}(., \bar{X}, \bar{R}) \equiv 0$ for every $X \in A$, which implies that $\Rel^n \times A \subseteq \Rel^n \times \Rel^{n \times k}$ is a subset of the zero set of $(y, X) \mapsto g^*_M(y, \bar{X}, \bar{R})$. But clearly $\Rel^n \times A$ has positive Lebesgue measure, a contradiction.]. In the following we shall construct such a pair $(y, X)$ as above:

\medskip

Let $\delta \neq 0$ and define $w_1, w_2, v(\delta) \in \Rel^{k+4}$ as $w_1 = (1, 1, \hdots, 1)'$, $w_2 = (-1, -1, 1, \hdots, 1)'$ and $v(\delta) = (-\delta, \delta, -(k+1), 1, \hdots, 1, 1)'$. By construction $v(\delta)$ is orthogonal to $w_1$ and $w_2$. Noting that $[w_1]_i = [w_2]_i$ for $i \geq 3$ a dimensionality argument implies existence of $k$ normalized vectors $w_3, \hdots, w_{k+2} \in \Rel^{k+4}$, that are functionally independent of $\delta$, linearly independent and orthogonal to $e_1(k+4)$, $e_2(k+4)$, $w_1$, $w_2$ and $v(\delta)$ (for every $\delta \neq 0$). Recall that $e_1(k+4)$ and $e_2(k+4)$ are the first two elements of the canonical basis of $\Rel^{k+4}$. Hence, the first two coordinates of $w_i$ for $i = 3, \hdots, k+2$ are zero. These orthogonality properties readily imply
\begin{equation}\label{orthoy}
\Pi_{\lspan(w_3, \hdots, w_{k+2}, w_1, w_2)^{\bot}} v(\delta) = \Pi_{\lspan(w_3 + w_1, \hdots, w_{k+2} + w_1, w_1, w_2)^{\bot}} v(\delta) = v(\delta)
\end{equation}
and $\rank(\bar{W}) = k+2$ for $\bar{W} = (w_3 + w_1, \hdots, w_{k+2} + w_1, w_1, w_2)$. Inserting zero coordinates and rows, respectively, we shall now suitably embed $v(\delta) \in \Rel^{k+4}$ and $W = (w_3 + w_1, \hdots, w_{k+2} + w_1) \in \Rel^{k+4 \times k}$ into $\Rel^n$ and $\Rel^{n \times k}$. Define $y(\delta)$ as 
\begin{equation}
(v_1(\delta), 0_{1,p^*+1}, v_2(\delta), 0_{1,p^*}, v_3(\delta), 0_{1,p^*+1}, v_4(\delta), 0_{1,p^*+1}, \hdots, v_{k+4}(\delta), 0_{1,p-1}, 0_{1,n-n^*})'
\end{equation}
and $X$ as
\begin{equation}
(W'_{1\cdot}, 0_{k,p^*+1}, W'_{2\cdot}, 0_{k,p^*}, W'_{3\cdot}, 0_{k,p^*+1}, W'_{4\cdot}, 0_{k,p^*+1},  \hdots,  W'_{(k+4)\cdot} ,0_{1,p-1}, 0_{1,n-n^*})',
\end{equation}
where $n^* = (k+3)(p^* + 2) + p-1$, a number that does not exceed $n$ by assumption. We emphasize that by construction $X$ does \textit{not} depend on $\delta$. Furthermore, if we delete from $e_+$ and $e_-$ those coordinates that correspond to the zero coordinates that have been inserted to obtain $y(\delta)$ from $v(\delta)$, we obtain the vectors $w_1$ and $w_2$. Therefore, it follows from Equation \eqref{orthoy} that $y(\delta)$ is orthogonal to $\lspan(\bar{X}) = \lspan((X, e_+, e_-))$, that $\rank(X, e_+, e_-) = \rank(\bar{W}) = k+2$ and that for every $\delta \neq 0$ we have
\begin{equation}
\hat{u}_{\bar{X}}(y(\delta)) = y(\delta).
\end{equation}
As an immediate consequence we obtain
\begin{align}\label{VconstrA}
\hat{V}_{\bar{X}}(y(\delta)) & = \bar{X}' \diag(y(\delta)) \\
&= (v_1(\delta) \bar{W}'_{1\cdot}, 0_{k+2,p^*+1}, v_2(\delta) \bar{W}'_{2\cdot}, 0_{k+2,p^*}, v_3(\delta) \bar{W}'_{3\cdot}, 0_{k+2,p^*+1}, \hdots  
\\ &  \hspace{5cm} \hdots, v_{k+4}(\delta) \bar{W}'_{{k+4}\cdot} ,0_{k+2,p-1}, 0_{k+2,n-n^*}),
\end{align}
where we recall that all coordinates of $v(\delta)$ are nonzero and therefore $\hat{V}_{\bar{X}}(y(\delta))$ has precisely $k+4$ nonzero columns. We now intend to apply Lemma \ref{AUXCONSTR} with $t = k+3$, acting as if $\bar{X} \in \Rel^{n \times (k+2)}$ was the underlying design, $(\bar{R}, r)$ was the hypothesis to be tested and with the triple $\kappa$, $\bar{M}$, $p$ which obviously satisfies Assumption \ref{weightsPWRB} with respect to $\bar{X}$ (a matrix with $k+2$ columns), since by assumption we have $1 \leq p \leq \frac{n}{k+3}$  (Note that due to interpreting $\bar{X}$ as the underlying design, $k+2$ corresponds to the `k' in Lemma \ref{AUXCONSTR}). We note first that removing the first or last row of $\bar{W}$ does not reduce its rank, because $v(\delta)$ (a vector all coordinates of which are nonzero) is orthogonal to every column of this matrix (cf. the argument in the beginning of the proof of Lemma \ref{AUXCONSTR}). Using $p^* \geq p$ we hence see that Assumptions (A1)-(A3) in Lemma \ref{AUXCONSTR} are satisfied by construction. Now consider the case $M \in \mathbb{M}_{KV}$. By definition $\bar{M}$ is an element of $\mathbb{M}_{KV}$ (acting as if $\bar{X}$ was the underlying design matrix). Therefore, Condition (CKV) is satisfied for $\delta \neq 0$ arbitrary, and $g^*_{\kappa, \bar{M}, p}(y(\delta), \bar{X}, \bar{R}) \neq 0$ follows. Consider the case where $M \in \mathbb{M}_{AM}$. Since $n - n^* \geq \mathbf{1}_{\mathbb{M}_{AM}}(M) = 1$ and because of $j_{k+4} = n^*-p+1$ it follows that $n-j_{k+4} > p-1$. Therefore, Condition (CAM) in Lemma \ref{AUXCONSTR} is satisfied and therefore $g^*_{\kappa, \bar{M}, p}(y(\delta), \bar{X}, \bar{R}) \neq 0$ for $\delta \neq 0$ arbitrary. It remains to consider the case where $M \in \mathbb{M}_{NW}$. It suffices to find a $\delta^* \neq 0$ such that $\bar{M}(y(\delta^*))$ is well defined (see the proof of Part 2 of Lemma \ref{AUXCONSTR}). The latter statement is equivalent to the denominator in the fraction appearing in the definition of $\bar{M}(y(\delta^*))$ being nonzero, i.e., 
\begin{equation}
\sum_{i = -(n-p-1)}^{n-p-1} w(i) \bar{\sigma}_i(y(\delta^*)) \neq 0,
\end{equation}
where 
\begin{equation}
\bar{\sigma}_i(y(\delta^*)) = (n-p)^{-1} \sum_{j= |i|+1}^{n-p} \bar{\omega}' [\hat{Z}_{\bar{X}}(y(\delta^*))]_{\cdot j} [\hat{Z}_{\bar{X}}(y(\delta^*))]_{\cdot (j-|i|)}' \bar{\omega} ~~~ \text{ for } |i| = 0, \hdots, n-p-1.
\end{equation}
By definition $\bar{\omega} = (\omega', 0, 0)'$ and we recall that $\hat{Z}_{\bar{X}} = \hat{V}_{p, \bar{X}}(y(\delta))$ which implies via Equation \eqref{VconstrA} that $(\omega', 0, 0) \hat{Z}_{\bar{X}}(y(\delta))$ equals
\begin{align}
(0_{k,p^*-p+2}, v_2(\delta) \omega' W'_{2\cdot}, 0_{k,p^*}, v_3(\delta) \omega'W'_{3\cdot}, 0_{k,p^*+1}, \hdots,  v_{k+4}(\delta) \omega' W'_{{k+4}\cdot} ,0_{k,p-1}, 0_{k,n-n^*})
\end{align}
The only coordinate of this vector that depends on $\delta$ is $v_2(\delta) \omega' W'_{2\cdot} = \delta \sum_{i = 1}^k \omega_i$, the latter equation following from $v_2(\delta) = \delta$ and $W_{2\cdot} = (1, \hdots, 1)$. Since $\sum_{j = 1}^k \omega_i > 0$, the denominator appearing in the definition of $M(y(\delta))$ interpreted as a function of $\delta$ is now seen to be a polynomial of degree $2$ in $\delta$. Hence, there must exist a $\delta^* \neq 0$ such that the denominator does not vanish. It follows that $g_{\kappa, \bar{M}, p}^*(y(\delta^*), \bar{X}, \bar{R}) \neq 0$. 

\medskip

Concerning the second statement we observe that $(k+2)(p^* + 2) + p-1 \leq n$ implies $k+1 <n$, and therefore we have $\rank(\tilde{X}, e_+, e_-) = k+1$ for $\lambda_{\Rel^{n \times (k-1)}}$-almost every $\tilde{X} \in \tilde{\mathfrak{X}}_0$. By assumption the first column of $R$ is zero. Therefore, for $\lambda_{\Rel^{n \times (k-1)}}$-almost every $X = (e_+, \tilde{X}) \in \Rel^{n \times k}$ we have $e_+ \in \mathfrak{M}_X$, $R\hat{\beta}_X(e_+) = 0$ and $e_- \notin \mathfrak{M}_X$, i.e., for $\lambda_{\Rel^{n \times (k-1)}}$-almost every $(e_+, \tilde{X}) \in \Rel^{n \times k}$ Scenario (1) in Theorem \ref{TU_3} applies. As above, it suffices to construct a pair $y \in \Rel^{n}$ and $\tilde{X} \in \Rel^{n \times (k-1)}$ (recall that $k \geq 2$), such that $g^*_{\kappa, \bar{M}, p}(y , \bar{X}, \bar{R}) \neq 0$, where $\bar{X} = (e_+, \tilde{X}, e_-) \in \Rel^{n \times (k+1)}$ and $\bar{R} = (R, 0)$. Here, the matrix $\tilde{X}$ is $n \times (k-1)$ dimensional. By assumption $k^{\sharp} = k-1$ obviously satisfies $(k^{\sharp}+3)(p^* + 2) + p-1 + 1_{\mathbb{M}_{AM}}(M) \leq n$. To construct the matrix $\tilde{X}$ we can thus use the same argument as was used to construct $X$ in the proof of the first statement ($k^{\sharp}$ replacing $k$). The matrix $\bar{X}$ so obtained has (after a permutation of its columns) the same structure as has the matrix $\bar{X}$ constructed in the proof of the first statement. We can therefore use almost the same arguments to conclude that $g^*_{\kappa, \bar{M}, p}(y(\delta^*) , \bar{X}, \bar{R}) \neq 0$ for some $\delta^* \neq 0$ and $y(\delta)$ as constructed in the first part of the proof.
\end{proof}

\small{
\bibliographystyle{ims}	% (uses file "ims.bst")
\bibliography{refs}		% expects file "refs.bib"

\begin{thebibliography}{25}
\expandafter\ifx\csname natexlab\endcsname\relax\def\natexlab#1{#1}\fi
\expandafter\ifx\csname url\endcsname\relax
  \def\url#1{\texttt{#1}}\fi
\expandafter\ifx\csname urlprefix\endcsname\relax\def\urlprefix{URL }\fi
\providecommand{\eprint}[2][]{\url{#2}}

\bibitem[{Andrews(1991)}]{A91}
\textsc{Andrews, D. W.~K.} (1991).
\newblock Heteroskedasticity and autocorrelation consistent covariance matrix
  estimation.
\newblock \textit{Econometrica}, \textbf{59} pp. 817--858.
\newblock \urlprefix\url{http://www.jstor.org/stable/2938229}.

\bibitem[{Andrews and Monahan(1992)}]{A92}
\textsc{Andrews, D. W.~K.} and \textsc{Monahan, J.~C.} (1992).
\newblock An improved heteroskedasticity and autocorrelation consistent
  covariance matrix estimator.
\newblock \textit{Econometrica}, \textbf{60} pp. 953--966.
\newblock \urlprefix\url{http://www.jstor.org/stable/2951574}.

\bibitem[{Bartlett(1950)}]{Bartlett1950}
\textsc{Bartlett, M.~S.} (1950).
\newblock Periodogram analysis and continuous spectra.
\newblock \textit{Biometrika}, \textbf{37} 1--16.

\bibitem[{den Haan and Levin(1997)}]{HL97}
\textsc{den Haan, W.~J.} and \textsc{Levin, A.~T.} (1997).
\newblock A practitioner's guide to robust covariance matrix estimation.
\newblock In \textit{Robust Inference} (G.~Maddala and C.~Rao, eds.), vol.~15
  of \textit{Handbook of Statistics}. Elsevier, 299 -- 342.
\newblock
  \urlprefix\url{http://www.sciencedirect.com/science/article/pii/S0169716197150143}.

\bibitem[{Grenander and Rosenblatt(1957)}]{GR57}
\textsc{Grenander, U.} and \textsc{Rosenblatt, M.} (1957).
\newblock \textit{Statistical analysis of stationary time series}.
\newblock John Wiley \& Sons, New York.

\bibitem[{Hannan(1957)}]{Hannan1957}
\textsc{Hannan, E.~J.} (1957).
\newblock The variance of the mean of a stationary process.
\newblock \textit{Journal of the Royal Statistical Society. Series B},
  \textbf{19} 282--285.

\bibitem[{Hansen(1992)}]{H92}
\textsc{Hansen, B.~E.} (1992).
\newblock Consistent covariance matrix estimation for dependent heterogeneous
  processes.
\newblock \textit{Econometrica}, \textbf{60} pp. 967--972.
\newblock \urlprefix\url{http://www.jstor.org/stable/2951575}.

\bibitem[{Jansson(2002)}]{J02}
\textsc{Jansson, M.} (2002).
\newblock Consistent covariance matrix estimation for linear processes.
\newblock \textit{Econometric Theory}, \textbf{18} 1449--1459.
\newblock
  \urlprefix\url{http://journals.cambridge.org/article_S0266466602186087}.

\bibitem[{Jansson(2004)}]{J04}
\textsc{Jansson, M.} (2004).
\newblock The error in rejection probability of simple autocorrelation robust
  tests.
\newblock \textit{Econometrica}, \textbf{72} 937--946.
\newblock \urlprefix\url{http://dx.doi.org/10.1111/j.1468-0262.2004.00517.x}.

\bibitem[{Jowett(1955)}]{Jow1955}
\textsc{Jowett, G.~H.} (1955).
\newblock The comparison of means of sets of observations from sections of
  independent stochastic series.
\newblock \textit{Journal of the Royal Statistical Society. Series B},
  \textbf{17} 208--227.

\bibitem[{Kiefer and Vogelsang(2002)}]{KV2002}
\textsc{Kiefer, N.~M.} and \textsc{Vogelsang, T.~J.} (2002).
\newblock Heteroskedasticity-autocorrelation robust testing using bandwidth
  equal to sample size.
\newblock \textit{Econometric Theory}, \textbf{18} 1350--1366.
\newblock \urlprefix\url{http://dx.doi.org/10.1017/S026646660218604X}.

\bibitem[{Kiefer and Vogelsang(2005)}]{KV2005}
\textsc{Kiefer, N.~M.} and \textsc{Vogelsang, T.~J.} (2005).
\newblock A new asymptotic theory for heteroskedasticity-autocorrelation robust
  tests.
\newblock \textit{Econometric Theory}, \textbf{21} 1130--1164.

\bibitem[{Kiefer et~al.(2000)Kiefer, Vogelsang and Bunzel}]{KVB2000}
\textsc{Kiefer, N.~M.}, \textsc{Vogelsang, T.~J.} and \textsc{Bunzel, H.}
  (2000).
\newblock Simple robust testing of regression hypotheses.
\newblock \textit{Econometrica}, \textbf{68} 695--714.
\newblock \urlprefix\url{http://dx.doi.org/10.1111/1468-0262.00128}.

\bibitem[{Newey and West(1987)}]{NW87}
\textsc{Newey, W.~K.} and \textsc{West, K.~D.} (1987).
\newblock A simple, positive semidefinite, heteroskedasticity and
  autocorrelation consistent covariance matrix.
\newblock \textit{Econometrica}, \textbf{55} 703--708.
\newblock \urlprefix\url{http://dx.doi.org/10.2307/1913610}.

\bibitem[{Newey and West(1994)}]{NW94}
\textsc{Newey, W.~K.} and \textsc{West, K.~D.} (1994).
\newblock Automatic lag selection in covariance matrix estimation.
\newblock \textit{The Review of Economic Studies}, \textbf{61} pp. 631--653.
\newblock \urlprefix\url{http://www.jstor.org/stable/2297912}.

\bibitem[{Okamoto(1973)}]{okamoto1973}
\textsc{Okamoto, M.} (1973).
\newblock Distinctness of the eigenvalues of a quadratic form in a multivariate
  sample.
\newblock \textit{The Annals of Statistics} 763--765.

\bibitem[{Preinerstorfer and P{\"o}tscher(2014{\natexlab{a}})}]{PP13}
\textsc{Preinerstorfer, D.} and \textsc{P{\"o}tscher, B.~M.}
  (2014{\natexlab{a}}).
\newblock On size and power of heteroskedasticity and autocorrelation robust
  tests.
\newblock \textit{Econometric Theory, forthcoming}.
\newblock \urlprefix\url{http://arxiv.org/abs/1304.1383}.

\bibitem[{Preinerstorfer and P{\"o}tscher(2014{\natexlab{b}})}]{PP14}
\textsc{Preinerstorfer, D.} and \textsc{P{\"o}tscher, B.~M.}
  (2014{\natexlab{b}}).
\newblock On the power of invariant tests for hypotheses on a covariance
  matrix.
\newblock \textit{Working Paper, Department of Statistics, University of
  Vienna}.
\newblock \urlprefix\url{http://arxiv.org/abs/1404.1310}.

\bibitem[{Rho and Shao(2013)}]{Rho2013}
\textsc{Rho, Y.} and \textsc{Shao, X.} (2013).
\newblock Improving the bandwidth-free inference methods by prewhitening.
\newblock \textit{Journal of Statistical Planning and Inference}, \textbf{143}
  1912 -- 1922.
\newblock
  \urlprefix\url{http://www.sciencedirect.com/science/article/pii/S0378375813001596}.

\bibitem[{Schwert(2009)}]{schwert2009eviews}
\textsc{Schwert, G.} (2009).
\newblock Eviews 7 user's guide {II}.
\newblock \textit{Quantitative Micro Software, LLC, Irvine, California}.

\bibitem[{Sun et~al.(2008)Sun, Phillips and Jin}]{SPJ08}
\textsc{Sun, Y.}, \textsc{Phillips, P. C.~B.} and \textsc{Jin, S.} (2008).
\newblock Optimal bandwidth selection in heteroskedasticity-autocorrelation
  robust testing.
\newblock \textit{Econometrica}, \textbf{76} 175--194.
\newblock \urlprefix\url{http://dx.doi.org/10.1111/j.0012-9682.2008.00822.x}.

\bibitem[{Sun et~al.(2011)Sun, Phillips and Jin}]{SPJ11}
\textsc{Sun, Y.}, \textsc{Phillips, P. C.~B.} and \textsc{Jin, S.} (2011).
\newblock Power maximization and size control in heteroskedasticity and
  autocorrelation robust tests with exponentiated kernels.
\newblock \textit{Econometric Theory}, \textbf{27} 1320--1368.
\newblock \eprint{http://journals.cambridge.org/article_S0266466611000077},
  \urlprefix\url{http://dx.doi.org/10.1017/S0266466611000077}.

\bibitem[{Velasco and Robinson(2001)}]{RV2001}
\textsc{Velasco, C.} and \textsc{Robinson, P.~M.} (2001).
\newblock Edgeworth expansions for spectral density estimates and studentized
  sample mean.
\newblock \textit{Econometric Theory}, \textbf{17} 497--539.

\bibitem[{Zeileis(2004)}]{Zeileissandwich}
\textsc{Zeileis, A.} (2004).
\newblock Econometric computing with {HC} and {HAC} covariance matrix
  estimators.
\newblock \textit{Journal of Statistical Software}, \textbf{11} 1--17.
\newblock \urlprefix\url{http://www.jstatsoft.org/v11/i10}.

\bibitem[{Zhang and Shao(2013)}]{ZhangShao2013}
\textsc{Zhang, X.} and \textsc{Shao, X.} (2013).
\newblock Fixed-smoothing asymptotics for time series.
\newblock \textit{The Annals of Statistics}, \textbf{41} 1329--1349.
\newblock \urlprefix\url{http://dx.doi.org/10.1214/13-AOS1113}.

\end{thebibliography}
}

\end{document}